\documentclass[a4paper,11pt,reqno]{amsart}
\usepackage{amsmath}
\usepackage{relsize}
\usepackage[noadjust]{cite}
\usepackage{color}
\usepackage[dvipsnames]{xcolor}
\usepackage{mathtools}
\usepackage[normalem]{ulem}
\usepackage{comment}
\newcommand\redout{\bgroup\markoverwith
{\textcolor{red}{\rule[.5ex]{2pt}{0.4pt}}}\ULon}

\usepackage{hyperref, enumitem}
\hypersetup{
  colorlinks   = true, 
  urlcolor     = blue, 
  linkcolor    = Purple, 
  citecolor   = red 
}
\usepackage{amsmath,amsthm,amssymb}

\usepackage[margin=2.5cm]{geometry}
\usepackage{caption} 

\usepackage{tikz-cd}
\usetikzlibrary{calc,fit,matrix,arrows,automata,positioning}
\usetikzlibrary{decorations.pathreplacing,angles,quotes}
\usepackage{stmaryrd}

\usetikzlibrary{fit}
\tikzset{%
  highlight/.style={rectangle,rounded corners,fill=red!15,draw,
    fill opacity=0.5,thick,inner sep=0pt}
}

\usepackage{array}
\newcolumntype{x}[1]{>{\centering\arraybackslash\hspace{0pt}}p{#1}}

\theoremstyle{definition}
\newtheorem{theorem}{Theorem}[section]
\newtheorem{definition}[theorem]{{{Definition}}}
\newtheorem{example}[theorem]{{{Example}}}
\newtheorem{notation}[theorem]{{{Notation}}}
\newtheorem{remark}[theorem]{{{Remark}}}

\newtheorem{corollary}[theorem]{{{Corollary}}}
\newtheorem{proposition}[theorem]{{{Proposition}}}
\newtheorem{lemma}[theorem]{{{Lemma}}}


\newcommand{\C}{\mathcal C}

\newcommand{\mM}{\mathcal{M}}
\newcommand{\mL}{\mathcal{L}}
\newcommand{\mU}{\mathcal{U}}
\newcommand{\mF}{\mathcal{F}}

\newcommand{\mI}{\mathcal{I}}
\newcommand{\mZ}{\mathcal{Z}}
\newcommand{\mS}{\mathcal{S}}
\newcommand{\mT}{\mathcal{T}}
\newcommand{\F}{\mathbb F}
\newcommand{\la}{\langle}
\newcommand{\ra}{\rangle}

\newcommand{\Fq}{\F_q}


\DeclareMathOperator{\GL}{GL}
\DeclareMathOperator{\supp}{supp}

\DeclareMathOperator{\rk}{rk}

\DeclareMathOperator{\dd}{d}

\DeclareMathOperator{\rr}{r}

\DeclareMathOperator{\wt}{wt}

\DeclareMathOperator{\rowsp}{rowsp}

\newcommand{\dr}{\dd_{\rr}}

\newcommand{\Fmkd}{[n,k,d]_{q^m/q}}
\newcommand{\Fmk}{[n,k]_{q^m/q}}
\newcommand{\Fm}{\F_{q^m}}

\usepackage{scrextend}
\deffootnote{0em}{0em}{\thefootnotemark\quad}

\title{Representability of the direct sum of uniform $q$-matroids}
\usepackage[foot]{amsaddr}
\author[G. N. Alfarano]{Gianira N. Alfarano$^{1,2,3}$}
\address{$^1$\textnormal{Université de Rennes, IRMAR - UMR 6625, Rennes Cedex, France.}}
\address{$^2$\textnormal{School of Mathematics and Statistics, University College Dublin, Science Centre Belfield Dublin 4, Ireland.}}
\address{$^3$\textnormal{Department of Mathematics and Data Science, Vrije Universiteit Brussel, B-1050 Brussels, Belgium.}}
 \email{gianira-nicoletta.alfarano@univ-rennes.fr}
 \author[R. Jurrius]{Relinde Jurrius$^4$}
 \address{$^4$\textnormal{Faculty of Military Sciences, Netherlands Defence Academy, the Netherlands.}}
 \email{rpmj.jurrius@mindef.nl}
\author[A. Neri]{Alessandro Neri$^{5,6}$}
\address{$^5$\textnormal{Department of Mathematics and Applications ``R. Caccioppoli'', University of Naples Federico II, Via Cintia, Monte S. Angelo, 80126 Napoli, Italy.}}
\address{$^6$\textnormal{Department of Mathematics: Analysis, Logic and Discrete Mathematics, Ghent University, Krijgslaan 281, 9000 Gent, Belgium.}}
 \email{alessandro.neri@unina.it}
\author[F. Zullo]{Ferdinando Zullo$^7$}
\address{$^7$\textnormal{Department of Mathematics and Physics, University of Campania ``Luigi Vanvitelli'', Viale Lincoln 5, 81100 Caserta, Italy.}}
 \email{ferdinando.zullo@unicampania.it}

\begin{document}

\begin{abstract}
  There are many similarities between the theories of matroids and $q$-matroids. However, when dealing with the direct sum of $q$-matroids many differences arise. Most notably, it has recently been shown that the direct sum of representable $q$-matroids is not necessarily representable. In this work, we focus on the direct sum of uniform $q$-matroids. Using algebraic and geometric tools, together with the notion of cyclic flats of $q$-matroids, we show that this is always representable, by providing a representation over a sufficiently large field.
\end{abstract}

\maketitle

\noindent {\bf Keywords.} $q$-matroids, representability, evasive subspaces, rank-metric codes, linear sets.

\section*{Introduction}

The concept of $q$-matroid traces back to Crapo's PhD thesis \cite{crapo1964theory}. More recently, $q$-matroids have been reintroduced in \cite{jurrius2018defining} and they gained a lot of attention because of their relation to rank-metric codes; see for instance \cite{gorla2019rank, shiromoto2019codes, ghorpade2020polymatroid, byrne2022constructions, byrne2021weighted, gluesing2021q}.  

A full exposition of \emph{cryptomorphisms}, i.e. many equivalent ways to describe a $q$-matroid axiomatically, has been given in \cite{byrne2022constructions}, in terms of rank function, independent spaces, flats, circuits, bases, spanning spaces, closure function, hyperplanes, open spaces etc. Later, another cryptomorphism based on cyclic flats has been derived in \cite{alfarano2022cyclic}. In general, these are not straightforward $q$-analogues of the traditional matroids' ones. 

While there are many similarities between matroids and $q$-matroids, there are substantial differences when dealing with the direct sum. Recently, the direct sum of $q$-matroids has been introduced in terms of the rank function; see \cite{ceria2021direct}. In the same paper, some fundamental properties have been established. Moreover in \cite{gluesing2023coproducts}, it has been shown that the direct sum of $q$-matroids is a coproduct in the category of $q$-matroids with linear weak maps as morphisms. This also implies that the direct sum of $q$-matroids on ground spaces $E_1$ and $E_2$ is the unique $q$-matroid with the most independent spaces among all $q$-matroids on $E_1 \oplus E_2$ whose restrictions to $E_i$ are isomorphic to the given $q$-matroids. In \cite{gluesing2023decompositions}, it is shown that the cyclic flats are one of the few objects that behave well with the direct sum of $q$-matroids. In \cite{gluesing2022representability}, the first steps towards the study of the representation of the direct sum of $q$-matroids have been taken. For a $q$-matroid with ground space $\F_q^n$, representability can be defined over any field extension of $\F_q$; see Section \ref{subsec:preliminaries_q-mat}. One big difference with the representation of matroids is that in $q$-matroids the characteristic of the field is fixed, and the only free parameter is the degree of the field extension. Moreover, while the direct sum of matroids is always representable, it has been shown that the direct sum of two representable $q$-matroids may not be representable over any field, or it may require a sufficiently large field; see~\cite{gluesing2022representability}. Representability of $q$-matroids has been geometrically described in \cite{alfarano2022cyclic}, where it is illustrated how to equivalently define the rank function of a representable $q$-matroid in terms of a $q$-system, a notion introduced in \cite{sheekey2019scatterd, randrianarisoa2020geometric}.

\medskip 

\paragraph{\textbf{Our contribution:}} In this paper we study the direct sum of $t$ many uniform $q$-matroids. These are special $q$-matroids, since they are representable as  \emph{maximum rank distance (MRD) codes} or (equivalently) as \emph{scattered subspaces}.

Our first main result is the geometric characterization of the representation of the direct sum of uniform $q$-matroids. We prove that a $q$-system is a representation of such direct sum if and only if it satisfies some evasiveness properties; see Theorem \ref{thm:charact_independence}. 
The crucial objects towards our characterization are the cyclic flats, which for uniform $q$-matroids are just the $0$-subspace and the whole ground space. 

The second main result is the construction of a $q$-system (over a sufficiently large field) that satisfies the previously mentioned evasiveness properties; see Theorem \ref{thm:direct_sum_constr_general}. This, in particular, shows that the direct sum of uniform $q$-matroids is always representable; see Theorem \ref{thm:representability_direct_sum}. However, it remains  an open question which is the smallest extension field over which we can find a representation of this direct sum. 

In the last part of the paper we study the extension fields over which the direct sum of two uniform $q$-matroids of rank $1$ is representable. This is a special case, since the associated system has been already investigated in terms
of linear sets with complementary weights; see~\cite{napolitano2022linear}. Thanks to this point of view, we can provide more precise results on the field size needed for the representation of two uniform $q$-matroids of rank $1$; see Theorem \ref{thm:summary}.

\medskip

\paragraph{\textbf{Outline of the paper:}} The rest of the paper is organized as follows. In Section \ref{sec:preliminaries}, we provide the needed background. In Section \ref{sec:direct_sum_uniform}, we characterize the direct sum of $t$ many uniform $q$-matroids in geometric terms. In Section \ref{sec:construction}, we exhibit a representation of such direct sum. In Section \ref{sec:point_point}, we restrict ourselves to study the direct sum of two uniform $q$-matroids of rank $1$. We draw some conclusions and open problems in Section \ref{sec:conclusion}.



\section{Preliminaries}\label{sec:preliminaries}

In this section we provide the necessary background for the rest of the paper. 

\subsection{Rank-metric codes and \emph{q}-systems}

Rank-metric codes have been introduced originally by Delsarte in~\cite{de78} and by Gabidulin in \cite{ga85a}. Let $\F_q$ be the finite field with $q$ elements and let $m,n$ be positive integers with $m\geq n$. We endow the vector space $\F_{q^m}^n$ with the \textbf{rank metric}, defined for every $x=(x_1,\ldots,x_n)$, $y=(y_1,\ldots,y_n)\in\F_{q^m}^n$ as 
\[\dr(x,y) = \dim_{\F_q}\langle x_1-y_1,\ldots, x_n-y_n\rangle_{\F_q}.\]
 An $[n,k]_{q^m/q}$ \textbf{rank-metric code} $\C$ is a $k$-dimensional $\F_{q^m}$-linear subspace of $\F_{q^m}^n$. The \textbf{minimum rank distance} of $\C$ is defined as
 \begin{align*}
     \dr(\C):= &\min\{ \dr(x,y) \mid x,y \in \C,\,\, x\neq y \}.
 \end{align*}

If $d=\dr(\C)$ is known, we write that $\C$ is an $[n,k,d]_{q^m/q}$  code.
The parameters $n,m,k,d$ of an $[n,k,d]_{q^m/q}$  rank-metric code satisfy a Singleton-like bound \cite{de78}, that reads~as
\begin{equation}\label{eq:singleton} k \leq n-d+1. \end{equation}
When equality holds, we say that $\C$ is a \textbf{maximum rank distance} code or \textbf{MRD} for short.
It has been shown that MRD codes exist if and only if $n\le m$, for any choice of $d\le n$, over any finite field, and over any field admitting a cyclic Galois extension of degree degree $m$; see~\cite{de78,ga85a,guralnick1994invertible,roth1996tensor}. Note that we are defining MRD codes starting from the Singleton-like bound in \eqref{eq:singleton}. However, sometimes in the literature one also admits the transpose Singleton-like bound, for which MRD codes may exist also when $n>m$.

For a vector 
$v \in \F_{q^m}^n$ and an ordered basis $\Gamma=\{\gamma_1,\ldots,\gamma_m\}$ of the field extension
$\F_{q^m}/\F_q$, let $\Gamma(v) \in \F_q^{n \times m}$ be the matrix defined by
$$v_i= \sum_{j=1}^m \Gamma(v)_{ij} \gamma_j.$$
The \textbf{support} of $v$ is the column space of $\Gamma(v)$ for any basis $\Gamma$; we denote it by $\supp(v)$.
The \textbf{rank-weight} of $v\in\F_{q^m}^n$ is the quantity $\rk(v)=\dim_{\F_q}(\supp(v))$.
A \textbf{generator matrix} for an $[n,k,d]_{q^m/q}$  code $\C$ is a full-rank matrix $G\in\F_{q^m}^{k\times n}$ such that $\C=\rowsp(G)$. If the columns of one (and hence any) generator matrix of $\C$ are $\F_q$-linearly independent, then $\C$ is said to be \textbf{nondegenerate}. 
Two $[n,k,d]_{q^m/q}$ rank-metric codes $\C$ and $\C'$ are {\textbf{equivalent}} if and only if
there exists $A \in \mathrm{GL}(n,\F_q)$ such that
$\C'=\C A=\{vA : v \in \C\}$.

There is a geometric interpretation of $[n,k,d]_{q^m/q}$ rank-metric codes as $q$-\emph{systems}. 
 \begin{definition}
An $\Fmkd$ {(or $\Fmk$)} \textbf{system} $\mS$ is an $n$-dimensional $\F_q$-subspace of $\F_{q^m}^k$, such that
$ \langle \mS \rangle_{\F_{q^m}}=\F_{q^m}^k$ and
$$ d=n-\max\left\{\dim_{\F_q}(\mS\cap H) \mid H \textnormal{ is an } \F_{q^m}\textnormal{-hyperplane of }\F_{q^m}^k\right\}.$$
 {The \textbf{full $q$-system} of $\F_{q^m}^k$ is the $[mk,k]_{q^m/q}$ system $\mS=\F_{q^m}^k$.}
Moreover, two $[n,k,d]_{q^m/q}$ systems $\mS$ and $\mS'$ are \textbf{equivalent} if there exists an $\F_{q^m}$-isomorphism $\varphi\in\GL(k,\F_{q^m})$ such that
$$ \varphi(\mS) = \mS'.$$
If the parameters are not relevant or clear from the context, we simply say that $\mS$ is a $q$-\textbf{system}.
\end{definition}

Although the term ``$q$-system'' has been introduced only a few years ago, the theory of these objects is strictly related to the one of \textbf{linear sets}. In some sense, $q$-systems and linear sets can be thought of as the same concept. Thus, we will report a few results and properties originally studied in the theory of linear sets, using the language of $q$-systems. The interested reader is referred to \cite{polverino2010linear} for an in-depth treatment of linear sets. 

\begin{definition}
Let $\mS$ be an $\Fmk$ system. For each $\Fq$-subspace $V$ of  $\Fm^k$, we define the \textbf{weight} of $V$ in $\mS$ as the integer
$$ \wt_{\mS}(V):=\dim_{\F_q}(\mS\cap V).$$
\end{definition}

In \cite{sheekey2019scatterd, randrianarisoa2020geometric} it has been proved that there is a one-to-one  correspondence  between  equivalence  classes of nondegenerate $\Fmkd$ codes and equivalence classes of $\Fmkd$ systems and the correspondence can be explained as follows.
Let $\C$ be an $[n,k,d]_{q^m/q}$ code and $G$ be a generator matrix for $\C$. Define $\mS$ to be the $\F_q$-span of the columns of $G$;  {it turns out to be an $[n,k,d]_{q^m/q}$ system}. In this case $\mS$ is also called a $q$-\textbf{system associated with} $\C$. Vice versa, given an $[n,k,d]_{q^m/q}$ system $\mS$, define $G$ to be the matrix whose columns are an $\F_q$-basis of $\mS$ and let $\C$ be the code generated by $G$;  {it turns out to be an $[n,k,d]_{q^m/q}$-code}. $\C$ is also called a \textbf{code associated with} $\mS$. For more details on this correspondence, see also \cite{alfarano2022linear}.

Finally recall the following result which provides a natural description of the supports of codewords of an $[n,k]_{q^m/q}$ code $\C$ in terms of a $q$-system $\mS$ associated to $\C$.
\begin{theorem}[{\cite[Theorem 3.1]{neri2021geometry}}]\label{thm:psiG}
Let $\C$ be an $[n,k]_{q^m/q}$ non-degenerate rank-metric code and let $\mS$ be the $\F_q$-span of the columns of a generator matrix $G$. Consider the isomorphism
\begin{align}\label{eq:psiG}
    \psi_G: \F_q^n \to \mS, \quad  v \mapsto vG^\top.
\end{align}
For every $u\in\F_{q^m}^k$ we have that 
$$\psi_G^{-1}(\mS \cap \langle u \rangle ^\perp)=\supp(uG)^\perp.$$
\end{theorem}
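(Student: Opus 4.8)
The plan is to verify the claimed equality of subspaces of $\F_q^n$ by characterizing membership in each side through a single algebraic condition and then showing the two conditions coincide. Throughout I fix the ordered basis $\Gamma$ used to define supports; since $\supp(uG)$ does not depend on this choice, neither will the final conclusion. I first note that $\psi_G$ is a well-defined $\F_q$-isomorphism onto $\mS$: for $v\in\F_q^n$ the image $vG^\top$ is the $\F_q$-linear combination of the columns of $G$ with coefficient vector $v$, hence lies in $\mS$, and nondegeneracy of $\C$ forces these columns to be $\F_q$-linearly independent, so $\psi_G$ is injective with image all of $\mS$.

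First I would translate the left-hand side into an equation. A vector $v\in\F_q^n$ lies in $\psi_G^{-1}(\mS\cap\langle u\rangle^\perp)$ if and only if $\psi_G(v)=vG^\top\in\langle u\rangle^\perp$. Writing out the standard $\F_{q^m}$-bilinear form on $\F_{q^m}^k$ that defines $\langle u\rangle^\perp$, this membership amounts to the single scalar equation $u\,(vG^\top)^\top=uGv^\top=0$ in $\F_{q^m}$. Setting $c:=uG\in\F_{q^m}^n$, the condition becomes $\sum_{i=1}^n c_i v_i=0$.

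Next I would unfold the right-hand side using the matrix $\Gamma(c)\in\F_q^{n\times m}$, whose entries satisfy $c_i=\sum_{j=1}^m \Gamma(c)_{ij}\gamma_j$. Substituting and regrouping gives
\[
\sum_{i=1}^n c_i v_i=\sum_{j=1}^m\Bigl(\sum_{i=1}^n v_i\,\Gamma(c)_{ij}\Bigr)\gamma_j=\sum_{j=1}^m (v\,\Gamma(c))_j\,\gamma_j.
\]
Because the coefficients $(v\,\Gamma(c))_j$ lie in $\F_q$ and the $\gamma_j$ are $\F_q$-linearly independent, this element of $\F_{q^m}$ vanishes if and only if $v\,\Gamma(c)=0$, that is, $v$ is $\F_q$-orthogonal to every column of $\Gamma(c)$. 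By definition $\supp(uG)=\supp(c)$ is precisely the column space of $\Gamma(c)$, so $v\,\Gamma(c)=0$ is equivalent to $v\in\supp(uG)^\perp$.

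Chaining the equivalences, $v\in\psi_G^{-1}(\mS\cap\langle u\rangle^\perp)$ if and only if $v\in\supp(uG)^\perp$, which is the asserted equality. The argument is otherwise a direct computation; the only points demanding care are keeping the two distinct bilinear forms straight --- the $\F_{q^m}$-form on $\F_{q^m}^k$ defining $\langle u\rangle^\perp$ versus the $\F_q$-form on $\F_q^n$ defining $\supp(uG)^\perp$ --- and tracking the transposes correctly. The degenerate case $u=0$ is covered automatically, since then both sides equal all of $\F_q^n$.
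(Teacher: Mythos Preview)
Your argument is correct: the two-sided inclusion is established by the chain of equivalences $v\in\psi_G^{-1}(\mS\cap\langle u\rangle^\perp)\Leftrightarrow uGv^\top=0\Leftrightarrow v\Gamma(uG)=0\Leftrightarrow v\in\supp(uG)^\perp$, and each step is justified as you wrote. Note, however, that the present paper does not supply its own proof of this statement; it is quoted from \cite{neri2021geometry} and used as a black box, so there is no in-paper proof to compare against. Your direct computation is exactly the standard verification one would expect for such a cited result.
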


\begin{definition} \label{def:ahevasive}
Let $\mathcal{A}$ be a collection of $\F_q$-subspaces of $\F_{q^m}^k$, let $\mS$ be an $\Fmk$ system and let $h$ be a positive integer. We say that
$\mS$ is $(\mathcal{A},h)$-\textbf{evasive} if
\[ \wt_{\mS}(A)\leq h, \quad \mbox{for all $A \in \mathcal{A}$}. \]
\end{definition}

Remark that since every $\F_{q^m}$-linear subspace is also $\F_q$-linear, we can apply the above definition to $\F_{q^m}$-linear subspaces as well.

Denote by $\Lambda_h$ the set of all the $h$-dimensional $\F_{q^m}$-subspaces of $\F_{q^m}^k$, that is
$$\Lambda_h:=\left\{ V : V \textnormal{ is } \F_{q^m}\textnormal{-subspace of } \F_{q^m}^k \textnormal{ and } \dim_{\Fm}(V)=h\right\}.$$

\begin{definition}
    Let $\mS$ be an $\Fmk$ system, and let $h,r$ be positive integers such that $1\le h \le k$. We say that $\mS$ is \textbf{$(h,r)$-evasive} if $\mS$ is $(\Lambda_h,r)$-\textbf{evasive}. When $h=r$, we say that $\mS$ is \textbf{$h$-scattered}. Finally, a $1$-scattered $q$-system will be simply called \textbf{scattered}. 
\end{definition}

The study of scattered $q$-systems originated in \cite{blokhuis2000scattered}. This notion was then generalized to any $h$ in \cite{csajbok2021generalising}, although the $(k-1)$-scatteredness was previously used in \cite{Sheekey2020} due to its connection with MRD codes. The complete correspondence between maximum $h$-scattered systems and MRD codes was then highlighted in \cite{zini2021scattered}. More recently, the more general property of evasiveness was analyzed in \cite{bartoli2021evasive} and \cite{gruica2022generalised}. The connection between evasiveness of $q$-systems and generalized rank weights of a corresponding code can be instead found in \cite{marino2023evasive}.

Finally, we introduce the following notion for a $q$-system. 
\begin{definition}\label{def:rhos}
    For an $\Fq$-subspace $V\subseteq \Fm^k$, we define the \textbf{$\Fm$-rank of $V$} to be the integer
    $$\rho(V)=\dim_{\Fm}(\langle V\rangle_{ \Fm}),$$
    that is the $\F_{q^m}$-dimension of the $\F_{q^m}$-span of any $\F_q$-basis of $V$.
    When we consider $\rho$ restricted to the $\Fq$-subspaces of an $\Fmk$ system $\mS$, we will write $\rho_\mS$.
\end{definition}

\begin{notation}\label{not:direct_sum}
Given a vector space $E$, we denote by $\mL(E)$ the lattice of all subspaces of $E$.
For any direct sum $E=E_1\oplus E_2\oplus\cdots\oplus E_t$ of $\F_q$-vector spaces $E_1, E_2, \ldots, E_t$, we denote by $\pi_i:E\longrightarrow E_i$  the corresponding projections and by $\iota_i:E_i\longrightarrow E$ the canonical embedding.
Furthermore, for $n_1,n_2,\ldots, n_t\in\mathbb{N}$ and $n=n_1+n_2+\cdots+n_t$, we set $\F_q^n=\F_q^{n_1}\oplus\F_q^{n_2}\oplus\cdots\oplus\F_q^{n_t}$ such that
$\pi_i:\F_q^n\longrightarrow \F_q^{n_i}$  is the projection onto the corresponding $n_i$ coordinates. Let $Q$ be a prime power and define $\iota_i:\mL(\mathbb{F}_Q^{n_i})\longrightarrow \mL(\mathbb{F}_Q^{n})$ such that for any $X \in \mL(\mathbb{F}_Q^{n_i})$ then $$\iota_i(X)=\langle 0 \rangle \oplus \cdots\oplus \langle 0\rangle \oplus X\oplus\langle 0 \rangle \oplus \cdots \oplus \langle 0 \rangle.$$
\end{notation}

\begin{definition}
    Let $\mS_1,\ldots,\mS_t$ be $[n_i,k_i]_{q^m/q}$ systems, for $1\leq i\leq t$. Then, denote by $\bigoplus_{i=1}^t\mS_i$ the $\left[\sum_{i=1}^t n_i, \sum_{i=1}^t k_i\right]_{q^m/q}$ system given by  $\iota_1(\mS_1)+\ldots+\iota_t(\mS_t)$. 
\end{definition}

\subsection{\emph{q}-Matroids}\label{subsec:preliminaries_q-mat}
In this subsection we briefly recall some facts about $q$-matroids that will be useful later on.
\begin{definition}
    A \textbf{$q$-matroid with ground space} $E$ is a pair $\mM=(E,\rho)$, where $E$ is a finite dimensional $\F_q$-vector space and $\rho$ is an integer-valued 
	function defined on $\mL(E)$ with the following properties:
	\begin{itemize}
		\item[(R1)] Boundedness: $0\leq \rho(A) \leq \dim A$, for all $A\in \mL(E)$. 
		\item[(R2)] Monotonicity: $A\leq B \Rightarrow \rho(A)\leq \rho(B)$, for all $A,B\in\mL(E)$. 
		\item[(R3)] Submodularity: $\rho(A+ B)+\rho(A\cap B)\leq \rho(A)+\rho(B)$, for all $A,B\in\mL(E)$.  
	\end{itemize}
 The function $\rho$ is called \textbf{rank function} and the value $\rho(\mM) := \rho(E)$ is the \textbf{rank of the $q$-matroid}. The value $h(\mM)=\dim_{\F_q}(E)$ is the \textbf{height of $\mM$}.
\end{definition}

A $1$-dimensional space $x\in\mL(E)$ is a \textbf{loop} if $\rho(x)=0$. A subspace $A\in\mL(E)$ is \textbf{independent} if $\rho(A)=\dim(A)$ and \textbf{dependent} otherwise. The inclusion-minimal dependent spaces are called \textbf{circuits}. A space $A\in\mL(A)$ is a \textbf{flat} if it is inclusion-maximal in the set $\{V \in \mL(E) \mid \rho(V)=\rho(A)\}$, and a space is \textbf{open} if it is the sum of circuits. Finally, a subspace $A\in\mL$ which is both a flat and an open space is called a \textbf{cyclic flat}. 

We denote by $\mI(\mM)$ and $\mZ(\mM)$ the collections of independent spaces and cyclic flats of the $q$-matroid $\mM$, respectively.
If it is clear from the context, we will simply write $\mI, \mZ $.

It is well known that the collection of independent spaces uniquely
determines the $q$-matroid, and the same is true for the collections of dependent spaces, open spaces,
and circuits. For this and many more cryptomorphisms for $q$-matroids we refer to \cite{byrne2022constructions}. Furthermore, the cyclic flats
along with their rank values  also uniquely determine the $q$-matroid; see \cite{alfarano2022cyclic}.

Two $q$-matroids $\mM_i=(E_i,\rho_i)$, $i=1,2$ are \textbf{equivalent} if there exists a rank-preserving $\Fq$-isomorphism $\alpha:E_1\to E_2$, i.e. $\rho_1(V)=\rho_2(\alpha(V))$ for all $V\in\mL(E_1)$. When this happens, we will write $\mM_1\cong\mM_2$.

We recall a well-known family of $q$-matroids, namely the family of \emph{uniform $q$-matroids}; see for instance \cite{jurrius2018defining}.

\begin{definition}\label{def:uniform}
Let $0\leq k\leq n$. For each $V\in\mL(\F_q^n)$, define $\rho(V) :=\min\{k,\dim(V)\}$. Then $(\F_q^n,\rho)$ is a $q$-matroid. It is called the \textbf{uniform $q$-matroid on $\F_q^n$ of rank $k$} and is denoted by $\mU_{k,n}(q)$.
\end{definition}

An important construction of a $q$-matroid arises from matrices; see \cite{jurrius2018defining, gorla2019rank}. Let $G$ be a $k \times n$ matrix with entries in $\F_{q^m}$ and for every $U\in\mL(\F_q^n)$, let $A^U$ be a matrix whose columns form a basis of $U$. Then the map 
\begin{equation}\label{eq:rank1}
    \rho:\mL(\F_q^n) \to \mathbb{Z}, \  U\mapsto \rk(G A^U),
\end{equation}
is the rank function of a $q$-matroid, which we denote by $\mM_G$ and we call the \textbf{$q$-matroid represented by $G$}. A $q$-matroid $\mM$ with ground space $\F_q^n$ is called \textbf{$\F_{q^m}$-representable} if $\mM=\mM_G$ for some matrix $G$ with  $n$ columns and entries in $\F_{q^m}$, whose rank equals the rank of $\mM$. The $q$-matroid $\mM$ is called \textbf{representable} if it is \textbf{$\F_{q^m}$-representable} over some field extension $\F_{q^m}$.
In other words, we can say that $\mM$ is $\F_{q^m}$-representable if there is a rank-metric code with generator matrix $G$, such that $\mM=\mM_G$. 

There is also an equivalent geometric interpretation of a representable $q$-matroid in terms of $q$-systems. This was proved in \cite[Theorem 4.6]{alfarano2022cyclic}, whose original statement may look slightly different.  {Indeed, there it was proved by using the independent spaces}. However, it is equivalent to the following reformulation,  {which uses the notion of rank function}.

 \begin{theorem}[{\cite[Theorem 4.6]{alfarano2022cyclic}}]\label{thm:independentrank}
Let $G$ be the generator matrix of a nondegenerate $\Fmk$ code, and let $\mS$ be any $\Fmk$ system associated to it.  {Then} $\mM_G\cong (\mS,\rho_{\mS})$.
\end{theorem}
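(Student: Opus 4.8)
The plan is to exhibit an explicit rank-preserving $\F_q$-isomorphism between the two ground spaces, and the natural candidate is the map $\psi_G\colon \F_q^n \to \mS$, $v\mapsto vG^\top$, already introduced in Theorem \ref{thm:psiG}. Since $\mS$ is by definition the $\F_q$-span of the columns $g_1,\dots,g_n$ of $G$ and the code is nondegenerate, these columns are $\F_q$-linearly independent; hence $\psi_G$ is injective, and as $\dim_{\F_q}(\F_q^n)=n=\dim_{\F_q}(\mS)$, it is an $\F_q$-isomorphism onto $\mS$. Unwinding the transpose convention, $\psi_G(v)=\sum_i v_i g_i$, i.e. $\psi_G$ sends a coordinate vector to the corresponding $\F_q$-linear combination of the columns of $G$, which indeed lands in $\mS$.

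First I would record the key identification: for any $U\in\mL(\F_q^n)$ with basis matrix $A^U$ whose columns $a_1,\dots,a_s$ form a basis of $U$, linearity of $\psi_G$ gives $\psi_G(U)=\langle \psi_G(a_1),\dots,\psi_G(a_s)\rangle_{\F_q}$, and $\psi_G(a_j)=\sum_i (a_j)_i g_i=Ga_j$. Thus $\psi_G(U)$ is exactly the $\F_q$-span of the columns $Ga_1,\dots,Ga_s$ of the matrix $GA^U$. The only point requiring care here is the bookkeeping of the transpose conventions, so that $\psi_G(U)$ is matched with the column space of $GA^U$ rather than with its row space.

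Next I would compute both rank functions on the same subspace $U$ and compare. On one side, by the definition of $\mM_G$ in \eqref{eq:rank1}, $\rho(U)=\rk(GA^U)$, the $\F_{q^m}$-dimension of the column space of $GA^U$. On the other side, by Definition \ref{def:rhos}, $\rho_\mS(\psi_G(U))=\dim_{\F_{q^m}}\langle \psi_G(U)\rangle_{\F_{q^m}}$, which by the identification above is the $\F_{q^m}$-span of $Ga_1,\dots,Ga_s$, i.e. precisely the column space of $GA^U$ over $\F_{q^m}$. Hence $\rho_\mS(\psi_G(U))=\rk(GA^U)=\rho(U)$, so $\psi_G$ is rank-preserving and $\mM_G\cong(\mS,\rho_\mS)$. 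Because $\mM_G$ is a $q$-matroid and $\psi_G$ is a rank-preserving isomorphism, this simultaneously confirms that $(\mS,\rho_\mS)$ is a genuine $q$-matroid. I expect no substantive obstacle: the content of the theorem is the translation between the matrix representation and the geometric ($q$-system) representation, and the verification reduces to the observation that the $\F_{q^m}$-rank of $GA^U$ can be read off either from the matrix product or from the $\F_{q^m}$-span of the image subspace, with the passage from the $\F_q$-span to the $\F_{q^m}$-span only replacing $\rk$ over $\F_q$ by $\rk$ over $\F_{q^m}$.
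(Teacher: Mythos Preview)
Your proposal is correct. The paper does not give its own proof of this statement but cites \cite[Theorem 4.6]{alfarano2022cyclic}; however, Remark \ref{rem:independent_spaces} explains that the original argument proceeds by characterizing the independent spaces of $\mM_G$ as the $\F_{q^m}$-independent subspaces of $\mS$ (those $I$ with $\dim_{\F_q}(I)=\dim_{\F_{q^m}}\langle I\rangle_{\F_{q^m}}$) and then observing that these coincide with the independent spaces of $(\mS,\rho_\mS)$, so that the two $q$-matroids agree via the cryptomorphism between rank functions and independent spaces.

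Your approach is a slight variant: instead of going through independent spaces, you verify directly that $\psi_G$ preserves the rank function on \emph{all} subspaces, by identifying $\psi_G(U)$ with the $\F_q$-column space of $GA^U$ and hence $\langle\psi_G(U)\rangle_{\F_{q^m}}$ with the $\F_{q^m}$-column space of $GA^U$. This is arguably cleaner, since it avoids the detour through the independent-space cryptomorphism and makes the equivalence $\rho_\mS(\psi_G(U))=\rk(GA^U)$ completely transparent. The independent-space formulation, on the other hand, is what the present paper actually uses downstream (for example, in the proof of Theorem \ref{thm:charact_independence}), so it has the advantage of isolating precisely the piece of information needed later. Both routes are short and essentially the same computation viewed from two angles.
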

 {\begin{proof}
     Let us consider the isomorphism of $\Fq$-vector spaces
    $$\psi_G: \Fq^n\longrightarrow \mS, \quad v \longmapsto v G^\top.$$
    For any given subspace $U\in \mathcal L(\Fq^n)$, let $A^U$ be a matrix whose columns form a basis of $U$. Then, $$\rho_\mS(\psi_G(U))=\dim_{\Fm}\langle \psi_G(U)\rangle_{\Fm}=\dim_{\Fm}(\mathrm{colsp}(GA^U))=\rk(GA^U)=\rho(U),$$
    where $\rho$ is the rank function defined in \eqref{eq:rank1} of the $q$-matroid $\mathcal M_G$.
\end{proof}}

In other words, Theorem \ref{thm:independentrank} characterizes representable $q$-matroids as those coming from a $q$-system. More precisely, a $q$-matroid $\mM$ of rank $k$ and height $n$ without loops is \textbf{$\Fm$-representable} if and only if it is equivalent to  a $q$-matroid $(\mS,\rho_\mS)$, for some $\Fmk$ system~$\mS$.  {Therefore, $(\mathcal{S},\rho_{\mathcal{S}})$ is called an $\mathbb{F}_{q^m}$-representation of $\mathcal{M}_G$.}

\begin{remark}[$\Fm$-independent spaces]\label{rem:independent_spaces}
    In the original statement \cite[Theorem 4.6]{alfarano2022cyclic}, the equivalence $\mM_G\cong (\mS,\rho_{\mS})$ is derived by characterizing the collection of independent spaces $\mI(\mM_G)$. This is  given by the set of \textbf{$\Fm$-independent}, which are precisely the $\Fq$-subspaces $I\subseteq \mS$ such that $\dim_{\Fq}(I)=\dim_{\Fm}(\langle I\rangle_{\Fm})$. Since by definition of $\rho_{\mS}$ (see Definition \ref{def:rhos}) the independent spaces of $(\mS,\rho_{\mS})$ coincides with $\mI(\mM_G)$, then Theorem \ref{thm:independentrank} is equivalent to \cite[Theorem 4.6]{alfarano2022cyclic}.
\end{remark}

\begin{remark}[Representability of uniform $q$-matroids] 
Let $\mU_{k,n}(q)$ be the uniform $q$-matroid of rank $k$ with ground space $\F_q^n$. Then the \emph{trivial} uniform $q$-matroid $\mU_{0,n}(q)$ and the \emph{free} uniform $q$-matroid $\mU_{n,n}$ are
representable over $\F_q$. In particular, $\mU_{0,n}(q)$ is represented by the $1 \times n$ zero matrix 
and $\mU_{n,n}(q)$ by
the $n\times n$ identity matrix. For $0 < k < n$, the uniform $q$-matroid $\mU_{k,n}(q)$ is representable over $\F_{q^m}$ if and only if
$m \geq n$. Indeed, a matrix $G\in\F_{q^m}^{k\times n}$ represents $\mU_{k,n}(q)$ if and only if $\rk(GY^\top) = k$ for all $Y \in \F_q^{k\times n}$ of rank $k$.
But this is equivalent to $G$ generating an MRD code and such a matrix $G$ exists
if and only if $m\geq n$; see e.g. \cite[Ex. 2.4]{gluesing2022representability}. 
Analogously, in the language of $q$-systems, a $\Fm$-representation of $\mathcal U_{k,n}(q)$ is given by any $\Fmk$ system which is $(k-1)$-scattered, and this is known to exist if and only if $m\ge n$; see e.g. \cite{Sheekey2020}.
\end{remark}

Like in the classical case, not all $q$-matroids are representable; see for instance the V\'amos $q$-matroid defined in \cite[Example 17]{byrne2021weighted} {, as shown in \cite{gluesing2021q}}. However, so far, very little is known about representability of $q$-matroids. 

\subsection{The direct sum of \emph{q}-matroids}
In \cite{ceria2021direct} the notion of direct sum of two $q$-matroids has been introduced. We are going to define the direct sum of $t$ many $q$-matroids iteratively, thanks to the associativity property of the direct sum, proved in \cite{gluesing2023decompositions}. We use Notation \ref{not:direct_sum}  {and the notation used in \cite{gluesing2023coproducts}}.

\begin{definition}
    Let $\mM_i=(E_i,\rho_i),\,i\in[t],$ be $q$-matroids and set $E=E_1\oplus\cdots\oplus E_t$.
Define $\rho'_i:\mL(E)\longrightarrow \mathbb{N}_0,\ V\longmapsto \rho_i(\pi_i(V))$ for $i\in[t]$.
Then $\mM'_i=(E,\rho'_i)$ is a $q$-matroid. 
We define the \textbf{direct sum of $\mM_1, \ldots,\mM_t$} to be the $q$-matroid $\mM:=(E,\rho)$, where $\rho$ is defined iteratively as follows:
\begin{itemize}
\item If $t=2$, then
\begin{equation}\label{e-rho}
\rho:\mL(E)\longrightarrow\mathbb{N}_0,\quad V\longmapsto \dim V+\min_{X\leq V}\big(\rho'_1(X)+\rho'_2(X)-\dim X\big).
\end{equation}
\item If $t>2$, then $\rho$ is the rank function of $(\mM_1\oplus\cdots\oplus\mM_{t-1})\oplus \mM_t$.
\end{itemize}
\end{definition}

In \cite{gluesing2022representability}, the authors initiated the study of the representability of the direct sum of two $q$-matroids. In particular, they proved that if the direct sum is representable then a representation is given by a block diagonal matrix whose blocks represent the
summands. In the following, we list a few important results from \cite{gluesing2022representability}, reformulated in the language of $q$-systems.

\begin{theorem}[{\cite[Theorem 3.4]{gluesing2022representability}}]\label{thm:GL-J-representation}
Let $\mM_i=(\F_q^{n_i},\rho_i),i=1,2,$ be $q$-matroids of rank $k_i$, and let  $\mM=\mM_1\oplus\mM_2$.
Suppose $\mM$ is representable over $\F_{q^m}$.
Then $\mM_1$ and $\mM_2$ are representable over $\F_{q^m}$ and $\mM=(\mS_1\oplus \mS_2,\rho_{\mS_1\oplus\mS_2})$ for some $[n_i,k_i]_{q^m/q}$ systems $\mS_i$, $i \in \{1,2\}$.
Furthermore, $(\mS_i,\rho_{\mS_i})$ is an $\Fm$-representation of $\mM_i$ for each $i \in \{1,2\}$. 
\end{theorem}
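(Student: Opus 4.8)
The plan is to start from any representation of the whole direct sum and massage its generator matrix into block-diagonal form. Concretely, pick $G\in\F_{q^m}^{k\times n}$ with $\rk(G)=k$ equal to the rank of $\mM$ (which is $k_1+k_2$, as the rank of a direct sum adds) and $\mM=\mM_G$, and write $G=[G_1\mid G_2]$, where $G_1\in\F_{q^m}^{k\times n_1}$ collects the first $n_1$ columns and $G_2\in\F_{q^m}^{k\times n_2}$ the last $n_2$. The first step is to identify the summands as coordinate restrictions of $\mM$. Using the direct-sum rank formula \eqref{e-rho} together with the increment inequality $\rho(B)-\rho(A)\le\dim B-\dim A$ for $A\le B$ (a standard consequence of (R1)--(R3)), one checks that the minimum in \eqref{e-rho} applied to $V=\iota_i(X)$ is attained at $X$ itself, so that $\rho(\iota_i(X))=\rho_i(X)$ for all $X\le\F_q^{n_i}$; that is, the restriction of $\mM$ to $\iota_i(\F_q^{n_i})$ is $\mM_i$. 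Since for $U\le\iota_1(\F_q^{n_1})$ the columns of $A^U$ have vanishing last $n_2$ coordinates, one gets $GA^U=G_1\,A^{\pi_1(U)}$, whence $\mM_1=\mM_{G_1}$; symmetrically $\mM_2=\mM_{G_2}$. In particular $\rk(G_i)=\rho_i(\F_q^{n_i})=k_i$, so both summands are $\F_{q^m}$-representable, giving the first assertion.

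The second step is the diagonalisation. Let $C_i$ be the $\F_{q^m}$-span of the columns of $G_i$ inside $\F_{q^m}^k$; then $\dim_{\F_{q^m}}C_i=\rk(G_i)=k_i$, while $C_1+C_2$ is the column space of $G$, of dimension $\rk(G)=k=k_1+k_2$. A dimension count forces $\F_{q^m}^k=C_1\oplus C_2$. Choosing $\varphi\in\GL(k,\F_{q^m})$ that sends $C_1$ onto the first $k_1$ coordinate directions and $C_2$ onto the last $k_2$, the matrix becomes block diagonal, $\varphi G=\begin{pmatrix}G_1'&0\\ 0&G_2'\end{pmatrix}$ with $G_i'\in\F_{q^m}^{k_i\times n_i}$ of rank $k_i$. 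Since $\varphi$ is invertible, $\rk(\varphi G\,A^U)=\rk(G\,A^U)$ for every $U$, hence $\mM_{\varphi G}=\mM_G=\mM$; the same computation applied blockwise gives $\mM_{G_i'}=\mM_{G_i}=\mM_i$.

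Finally I would translate back into the language of $q$-systems. Setting $\mS_i$ to be the $\F_q$-span of the columns of $G_i'$ inside $\F_{q^m}^{k_i}$, the $\F_q$-span of the columns of $\varphi G$ is exactly $\iota_1(\mS_1)+\iota_2(\mS_2)=\mS_1\oplus\mS_2$. Theorem \ref{thm:independentrank} then yields $\mM=\mM_{\varphi G}\cong(\mS_1\oplus\mS_2,\rho_{\mS_1\oplus\mS_2})$, and applied to each block it gives $\mM_i=\mM_{G_i'}\cong(\mS_i,\rho_{\mS_i})$, which is the remaining content of the statement.

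The step I expect to require the most care is this last passage through Theorem \ref{thm:independentrank}, which is phrased for nondegenerate codes, equivalently for loop-free $q$-matroids: one must ensure the columns of $\varphi G$ are $\F_q$-linearly independent, i.e.\ that $\mM$ carries no loops, or first reduce to that case. By contrast, the diagonalisation is automatic once the complementarity $\F_{q^m}^k=C_1\oplus C_2$ is established, and the identification of the two restrictions with $\mM_1$ and $\mM_2$ is routine given the increment inequality.
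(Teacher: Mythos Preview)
The paper does not give its own proof of this statement: it is quoted verbatim as \cite[Theorem~3.4]{gluesing2022representability} and merely rephrased in the language of $q$-systems, so there is nothing to compare your argument against here. That said, your proof is correct and is essentially the standard one: split a generator matrix of $\mM$ as $G=[G_1\mid G_2]$, identify the restrictions $\mM|_{\iota_i(\F_q^{n_i})}$ with $\mM_i$ via the rank formula~\eqref{e-rho}, deduce $\rk(G_i)=k_i$ and the complementarity $C_1\oplus C_2=\F_{q^m}^k$ of the column spaces, and then block-diagonalise by a left $\GL(k,\F_{q^m})$-action before invoking Theorem~\ref{thm:independentrank}.

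Your closing caveat is exactly right and worth keeping: Theorem~\ref{thm:independentrank} is stated for nondegenerate codes, so the passage to $q$-systems needs the columns of $\varphi G$ (equivalently of each $G_i'$) to be $\F_q$-linearly independent, i.e.\ each $\mM_i$ loop-free. The very formulation of the theorem in terms of $[n_i,k_i]_{q^m/q}$ systems already presupposes this, and the paper adopts it as a standing assumption immediately afterwards; but it is a genuine hypothesis, not a by-product of the argument.
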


Moreover, in \cite{gluesing2022representability}, it is shown that the direct sum of two $\F_{q^m}$-representable $q$-matroids may not be $\F_{q^m}$-representable or, even, not be representable over any field extension of $\F_q$. 

\begin{example}[{\cite[Proposition 3.8]{gluesing2022representability}}]\label{exa:representable_U12}
Let $\mM = \mU_{1,2}(q)$. Then $\mM$ is representable over $\F_{q^2}$, whereas $\mM\oplus \mM$ is representable over $\F_{q^m}$ if and only if $m \geq 4$.
\end{example}

\begin{example}[{\cite[Proposition 3.7]{gluesing2022representability}}]\label{ex:nonrepr}
    Let $\F_q=\F_2$ and $\F_4=\{0,1,\omega,\omega+1\}$. Consider the full $[4,2]_{4/2}$ system 
$ \mS=\F_4^2$
and set $\mM:=(\mS,\rho_{\mS})$.
Then $\mM\oplus\mM$ is not representable.
\end{example}

In the rest of the paper we investigate the representability of the direct sum of $t$ uniform $q$-matroids. To this end, we will use the following properties of cyclic flats of $q$-matroids.

\begin{lemma}[{\cite[Lemma 2.28]{alfarano2022cyclic}}]\label{lem:indep_cyclic_flats}
Let $\mM=(E,\rho)$ be a $q$-matroid. $I\in \mL(E)$ is independent if and only if for every cyclic flat $X\in\mL(E)$, $\dim(I \cap X) \leq \rho(X)$.
\end{lemma}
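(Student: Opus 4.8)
The statement splits into two implications, and the forward one is routine. The plan for it is first to record the standard consequence of the axioms that rank grows by at most one per added dimension: for any $A\in\mL(E)$ and any one-dimensional $x$, submodularity (R3) together with (R1) gives $\rho(A+x)\le\rho(A)+\rho(x)\le\rho(A)+1$. Iterating this along a flag from $J$ up to $I$ yields $\rho(I)\le\rho(J)+(\dim I-\dim J)$ for every $J\le I$; if $I$ is independent this forces $\rho(J)=\dim J$, so every subspace of an independent space is again independent. Now if $I$ is independent and $X$ is any cyclic flat, then $I\cap X\le I$ is independent and $I\cap X\le X$, so by monotonicity (R2) we get $\dim(I\cap X)=\rho(I\cap X)\le\rho(X)$, which is exactly the asserted inequality.

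For the converse I would argue by contraposition: assuming $I$ is dependent, that is $\rho(I)<\dim I$, I would exhibit a single cyclic flat $X$ with $\dim(I\cap X)>\rho(X)$. The natural candidate is produced by two closure-type operations. First pass to the closure $\Cl(I)$, the inclusion-minimal flat containing $I$; it satisfies $\rho(\Cl(I))=\rho(I)<\dim I$ and $I\le \Cl(I)$, so $F:=\Cl(I)$ already witnesses dependence as a flat, but it need not be open. To repair this, take the cyclic core $Z=\cyc(F)$, the sum of all circuits contained in $F$. By construction $Z$ is open, and the key structural claim is that the cyclic core of a flat is again a flat, hence a cyclic flat: if a one-dimensional $x\le\Cl(Z)$ had $x\not\le Z$, then $\rho(Z+x)=\rho(Z)$ would place $x$ on a circuit $C\le F$, and since $Z$ collects every circuit of $F$ we would get $x\le C\le Z$, a contradiction.

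The heart of the argument, and the step I expect to be the main obstacle, is the rank decomposition $\rho(F)=\rho(Z)+(\dim F-\dim Z)$: intuitively the quotient $F/Z$ carries no circuits, since any such circuit would pull back to one of $F$ meeting $F$ outside $Z$, so contracting $Z$ leaves a free $q$-matroid on $F/Z$. Making this precise in the $q$-setting—ruling out circuits in the contraction and working in the subspace lattice rather than with subsets—is where the real work lies, and it is essentially the material developed alongside the cyclic-flat cryptomorphism. Granting it, the proof finishes by dimension counting: since $I\le F$, the projection $F\to F/Z$ gives $\dim I-\dim(I\cap Z)\le\dim F-\dim Z$, and combining this with $\rho(F)<\dim I$ yields
\[
\rho(Z)=\rho(F)-(\dim F-\dim Z)<\dim I-(\dim F-\dim Z)\le\dim(I\cap Z).
\]
Thus the cyclic flat $Z$ satisfies $\dim(I\cap Z)>\rho(Z)$, contradicting the hypothesis and proving that $I$ must be independent.
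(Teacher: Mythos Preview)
The paper does not supply its own proof of this lemma; it is quoted verbatim from \cite[Lemma~2.28]{alfarano2022cyclic}. So there is nothing to compare against here, and the question is simply whether your argument is sound.

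Your forward direction is clean and correct. Your converse has the right architecture---pass to the closure $F=\Cl(I)$, drop to the cyclic core $Z=\cyc(F)$, use the rank decomposition $\rho(F)=\rho(Z)+\dim F-\dim Z$, and finish by the modular inequality $\dim I-\dim(I\cap Z)\le\dim F-\dim Z$---and this is exactly the route taken in the cited source. The two structural inputs you flag (that $\cyc$ of a flat is a flat, and the rank decomposition) are indeed the substance of the matter and are established in \cite{alfarano2022cyclic}.

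One caution on your sketch that $Z=\cyc(F)$ is a flat: the step ``$\rho(Z+x)=\rho(Z)$ places $x$ on a circuit $C\le Z+x$'' is a classical-matroid reflex that does not transfer verbatim to $q$-matroids. In the subspace lattice, a circuit $C\le Z+x$ with $C\not\le Z$ need not contain $x$. The $q$-correct version avoids naming a circuit through $x$ altogether: since the nullity is preserved under $\cyc$, one has $n(\cyc(Z+x))=n(Z+x)=n(Z)+1$, while $Z\le\cyc(Z+x)\le Z+x$ forces $\cyc(Z+x)\in\{Z,Z+x\}$; the first option contradicts the nullity count, so $Z+x$ is itself open, hence a sum of circuits of $F$, hence contained in $Z$. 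You essentially anticipate this when you say the $q$-setting ``is where the real work lies'', but it is worth knowing precisely which classical shortcut breaks.
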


\begin{lemma}[{\cite[Proposition 2.30]{alfarano2022cyclic}}]\label{lem:uniform_cyclic_flats}
Let $\mM=(\F_q^n,\rho)$ be a $q$-matroid of rank $k$, with $0 < k < n$. Then $\mM =\mU_{k,n}(q)$ if and only if  {$\langle 0\rangle$ and $\F_q^n$ are the only cyclic flats of $\mM$}.
\end{lemma}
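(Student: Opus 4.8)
The plan is to prove the two implications separately: for the forward direction I would compute directly the circuits, open spaces, and flats of $\mU_{k,n}(q)$ and intersect the last two families; for the reverse direction I would invoke the independence characterization via cyclic flats, i.e. Lemma \ref{lem:indep_cyclic_flats}.

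\textbf{Forward direction.} Assume $\mM=\mU_{k,n}(q)$, so $\rho(V)=\min\{k,\dim V\}$. First I would observe that $V$ is dependent exactly when $\dim V\geq k+1$, so the circuits are precisely the $(k+1)$-dimensional subspaces. Next I would determine the open spaces: any nonempty sum of circuits has dimension at least $k+1$, while conversely every $V$ with $\dim V=d\geq k+1$ is a sum of circuits. Indeed, choosing a basis $e_1,\dots,e_d$ of $V$, the subspaces $\langle e_1,\dots,e_{k+1}\rangle$ and $\langle e_1,\dots,e_k,e_j\rangle$ for $k+1<j\leq d$ are all $(k+1)$-dimensional, hence circuits, and their sum is $V$. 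Thus the open spaces are exactly $\langle 0\rangle$ together with all subspaces of dimension at least $k+1$. Separately, I would show the flats are $\langle 0\rangle$ and every subspace of dimension at most $k-1$ (each is inclusion-maximal among subspaces of its own rank, since any proper superspace of a $j$-dimensional space with $j<k$ has strictly larger rank), together with $\F_q^n$ (the unique inclusion-maximal subspace of rank $k$, as $k<n$). Intersecting the two families, and using $0<k<n$ so that $0\leq k-1$ and $n\geq k+1$, the only flats that are also open are $\langle 0\rangle$ and $\F_q^n$.

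\textbf{Reverse direction.} Now suppose $\mM$ has rank $k$ with $0<k<n$ and $\mZ(\mM)=\{\langle 0\rangle,\F_q^n\}$. Here $\rho(\langle 0\rangle)=0$ by boundedness (R1) and $\rho(\F_q^n)=k$ by the hypothesis on the rank. Applying Lemma \ref{lem:indep_cyclic_flats}, a subspace $I$ is independent if and only if $\dim(I\cap\langle 0\rangle)\leq 0$ and $\dim(I\cap\F_q^n)=\dim I\leq k$; the first condition is vacuous, so $I$ is independent precisely when $\dim I\leq k$. These are exactly the independent spaces of $\mU_{k,n}(q)$, and since the collection of independent spaces uniquely determines the $q$-matroid, I conclude $\mM=\mU_{k,n}(q)$.

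\textbf{Main obstacle.} The reverse direction is essentially immediate once Lemma \ref{lem:indep_cyclic_flats} is granted, so the bulk of the work lies in the forward direction. The most delicate point there will be verifying that every subspace of dimension at least $k+1$ is open, which I expect to require the explicit decomposition into $(k+1)$-dimensional circuits above rather than a purely abstract argument. I would also take care to track where the hypotheses $k\geq 1$ and $k<n$ are used, since the statement genuinely fails for the trivial matroid $\mU_{0,n}(q)$ (where $\langle 0\rangle$ is not a flat) and the free matroid $\mU_{n,n}(q)$ (where there are no circuits, hence no nontrivial open spaces).
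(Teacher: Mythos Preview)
Your argument is correct in both directions. Note, however, that the paper does not give its own proof of this lemma: it is quoted verbatim from \cite[Proposition~2.30]{alfarano2022cyclic} and used as a black box, so there is no in-paper proof to compare against. Your approach---direct enumeration of circuits, open spaces, and flats for the forward direction, and an appeal to Lemma~\ref{lem:indep_cyclic_flats} for the reverse---is precisely the natural one, and your bookkeeping on where the hypotheses $0<k<n$ enter is accurate.

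One small redundancy: in the forward direction you list $\langle 0\rangle$ separately among the flats, but since $k\geq 1$ it is already covered by ``every subspace of dimension at most $k-1$''. This is harmless. Also, the decomposition of a $d$-dimensional space ($d\geq k+1$) into $(k+1)$-dimensional circuits is not particularly delicate; it is a one-line observation, so your ``main obstacle'' paragraph somewhat overstates the difficulty there.
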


In \cite{gluesing2023decompositions}, it ha been shown that cyclic flats also behave well with the direct sum of $q$-matroids. We state the result for $t$ summands.

\begin{lemma}[{\cite[Theorem 6.2]{gluesing2023decompositions}}]\label{lem:sum_cyclic_flats}
    For each $i\in [t]$, let $\mM_i=(\F_q^{n_i},\rho_i)$ be a $q$-matroid. Then,     $$\mZ(\mM_1\oplus\cdots\oplus\mM_t)=\left\{\bigoplus_{i=1}^tZ_i\,:\, Z_i\in\mZ(\mM_i), \; \textnormal{ for } i\in[t]\right\}.$$
\end{lemma}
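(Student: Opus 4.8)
The plan is to reduce to the case $t=2$ by induction, using the iterative definition of the direct sum together with its associativity, and then to prove the two inclusions separately. Throughout I would write $\mM=\mM_1\oplus\mM_2$ on $E=\F_q^{n_1}\oplus\F_q^{n_2}$, let $\rho$ be its rank function as in \eqref{e-rho}, and for $V\in\mL(E)$ set $\sigma(V):=\rho_1(\pi_1(V))+\rho_2(\pi_2(V))$. The first preliminary step is the product formula $\rho(\iota_1(V_1)+\iota_2(V_2))=\rho_1(V_1)+\rho_2(V_2)$: starting from \eqref{e-rho}, the minimand $\rho'_1(X)+\rho'_2(X)-\dim X$ can only decrease when $X$ is replaced by $\pi_1(X)\oplus\pi_2(X)$, so the minimum over $X\le V_1\oplus V_2$ is attained on product subspaces, where it splits into the two summands and is minimized (using that $\mM_i$-nullity is monotone) at $X=V_1\oplus V_2$. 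In particular $\rho(V)\le\sigma(V)$ for all $V$, with equality on products; this also shows $\mM$ restricts to $\mM_i$ on $\iota_i(\F_q^{n_i})$.

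For the inclusion $\supseteq$, let $Z_i\in\mZ(\mM_i)$ and put $Z=\iota_1(Z_1)+\iota_2(Z_2)$. That $Z$ is open is elementary: if $C$ is a circuit of $\mM_i$ then $\iota_i(C)$ is a circuit of $\mM$, since all its subspaces lie in $\iota_i(\F_q^{n_i})$ where $\mM$ agrees with $\mM_i$; writing each $Z_i$ as a sum of circuits then exhibits $Z$ as a sum of circuits. That $Z$ is a flat is a short rank computation: for $w=(w_1,w_2)\notin Z$ one bounds the nullity of $Z+\langle w\rangle$ by the sum of the $\mM_1$- and $\mM_2$-nullities of its projections, and since each $Z_i$ is a flat, adjoining $w_i$ does not change the $\mM_i$-nullity of $Z_i$. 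Hence the nullity of $Z+\langle w\rangle$ equals that of $Z$ while the dimension grows by one, so $\rho(Z+\langle w\rangle)=\rho(Z)+1$.

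The inclusion $\subseteq$ is the heart of the argument. Given $Z\in\mZ(\mM)$, I would first prove $\rho(Z)=\sigma(Z)$. The key case is a single circuit $C$: monotonicity against a codimension-one subspace forces $\rho(C)=\dim C-1$, and together with $\rho(C)\le\sigma(C)\le\dim C-1$ this yields $\rho(C)=\sigma(C)$; hence the product hull $\pi_1(C)\oplus\pi_2(C)$ has the same rank as $C$ and therefore lies in $\mathrm{cl}(C)$. Writing an open $Z$ as $\sum_j C_j$ and summing these inclusions gives $\pi_1(Z)\oplus\pi_2(Z)=\sum_j\bigl(\pi_1(C_j)\oplus\pi_2(C_j)\bigr)\le\mathrm{cl}(Z)$, so $\rho(Z)=\rho(\pi_1(Z)\oplus\pi_2(Z))=\sigma(Z)$. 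Flatness then closes the loop: $Z\le\pi_1(Z)\oplus\pi_2(Z)$ with equal rank and $\mathrm{cl}(Z)=Z$ force $Z=\pi_1(Z)\oplus\pi_2(Z)$, so $Z$ is a product. Finally I would show $Z_i:=\pi_i(Z)$ is a cyclic flat of $\mM_i$: it is a flat because enlarging $Z_i$ without raising its $\mM_i$-rank would, via the product formula, enlarge $Z$ without raising $\rho$, contradicting flatness; and it is open by applying the flat statement just proved to the dual, using $(\mM_1\oplus\mM_2)^*=\mM_1^*\oplus\mM_2^*$ together with the duality $Z\in\mZ(\mM)\Leftrightarrow Z^\perp\in\mZ(\mM^*)$, which converts ``$Z_i$ open in $\mM_i$'' into ``$Z_i^\perp$ flat in $\mM_i^*$.''

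The main obstacle is the additivity $\rho(Z)=\sigma(Z)$ for open $Z$: circuits of a direct sum of $q$-matroids need not be supported on a single summand (unlike the classical case), so a naive componentwise argument fails. The device that rescues it is that, although an individual circuit $C$ may be ``diagonal,'' its product hull $\pi_1(C)\oplus\pi_2(C)$ still lies in $\mathrm{cl}(C)$, and this inclusion is stable under taking sums of circuits. Beyond this, the only points requiring care are the duality ingredients, namely that $\oplus$ commutes with $(-)^*$ and that complementation matches cyclic flats of $\mM$ with those of $\mM^*$.
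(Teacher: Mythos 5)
The paper does not prove this lemma; it imports it verbatim from \cite{gluesing2023decompositions}, so there is no internal proof to compare against and I can only judge your outline on its own terms. Structurally it is sound: the reduction to $t=2$ via associativity, the product formula $\rho(\iota_1(V_1)+\iota_2(V_2))=\rho_1(V_1)+\rho_2(V_2)$ obtained by pushing the minimizer in \eqref{e-rho} to a product subspace, the nullity bound showing that a product of cyclic flats is a flat, the circuit-closure trick forcing every cyclic flat of the sum to be a product, and the duality step for openness of the components all check out; the external facts you invoke, namely $(\mM_1\oplus\mM_2)^*=\mM_1^*\oplus\mM_2^*$ and $\mZ(\mM^*)=\{Z^\perp\st Z\in\mZ(\mM)\}$, are available in the literature this paper already cites.

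There is, however, one genuine gap, and it sits exactly at the point you yourself call the heart of the argument. For a circuit $C$ of $\mM_1\oplus\mM_2$ you write ``$\rho(C)\le\sigma(C)\le\dim C-1$'' as if the second inequality were free, where $\sigma(V)=\rho_1(\pi_1(V))+\rho_2(\pi_2(V))$. It is not: for a general dependent space $V$ one only knows $\rho(V)\le\dim V-1$ and $\rho(V)\le\sigma(V)$; nothing bounds $\sigma(V)$ above by $\dim V-1$, since $\sigma(V)$ can be as large as $\dim\pi_1(V)+\dim\pi_2(V)$. The inequality $\sigma(C)\le\dim C-1$ holds precisely because $C$ is a \emph{minimal} dependent space, and that is where minimality must enter your proof. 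The repair is short: since $\rho(C)=\dim C-1$, the minimum in \eqref{e-rho} equals $-1$ and is attained at some $X_0\le C$ with $\sigma(X_0)=\dim X_0-1$; then $\rho(X_0)\le\sigma(X_0)<\dim X_0$, so $X_0$ is a nonzero dependent subspace of the circuit $C$, whence $X_0=C$ and $\sigma(C)=\dim C-1=\rho(C)$. With this step inserted, the rest of your argument (the product hull of a circuit lies in its closure, summing over the circuits of an open $Z$, and flatness forcing $Z=\pi_1(Z)\oplus\pi_2(Z)$) goes through.
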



\section{The direct sum of uniform \emph{q}-matroids}\label{sec:direct_sum_uniform}

In this section we take the first steps toward the geometric characterization of the representability of the direct sum of $t$ uniform $q$-matroids  {$\mU_{k_1,n_1}(q)\oplus\cdots\oplus\mU_{k_t,n_t}(q)$}, by characterising its independent spaces. In the rest of the paper we will assume that none of the $t$ summands is the trivial or the free uniform $q$-matroid. This is due to the following observation.

\begin{remark}
The trivial uniform $q$-matroid $\mU_{0,s}(q)$ has only loops, so we cannot consider its representation as $q$-system. However, as we already pointed out, in terms of matrices,  $\mU_{0,s}(q)$  can represented by a $1\times s$ zero matrix. Moreover, it is not difficult to see that a matrix $G\in \Fm^{k\times n}$ is an $\Fm$-representation of a $q$-matroid $\mM$ if and only if the matrix 
$$(\,G\,|\,0\,)\in \Fm^{k \times(n+s)}$$
is an $\Fm$-representation of $\mM\oplus\mU_{0,s}(q)$. 

 Furthermore, in \cite[Proposition 3.9]{gluesing2022representability} it is proven that a $q$-matroid $\mM$ is $\F_{q^m}$-representable as $\mM=(\mS,\rho_{\mS})$, if and only if $\mM\oplus \mU_{s,s}(q)$ is $\F_{q^m}$-represented by $(\mS\oplus \Fq^s,\rho_{\mS\oplus \Fq^s})$. 

Hence, without loss of generality, we can assume from now on that none of the summands involved is the trivial uniform $q$-matroid nor the free uniform $q$-matroid. 
\end{remark}

For the rest of the paper, assume  {$1\leq k_i<n_i$} and let $\mU_{k_i,n_i}(q)$, $i\in[t]$ be uniform $q$-matroids. 

By Lemmas \ref{lem:uniform_cyclic_flats} and \ref{lem:sum_cyclic_flats}, we have that
$$\mZ(\mU_{k_1,n_1}(q)\oplus\cdots\oplus \mU_{k_t,n_t}(q))=\left\{\bigoplus_{i=1}^t Z_i \,:\, Z_i\in\{\langle 0 \rangle, \F_q^{n_i}\}, i \in [t]\right\}.$$

Moreover, it follows from \cite[Theorem 45]{ceria2021direct} that
$$\rho\left(\bigoplus_{i=1}^t Z_i\right)=\sum_{i=1}^t\rho_i(Z_i),$$
where $Z_i\in\{\langle 0 \rangle, \F_q^{n_i}\}$. 

The next theorem finds the independent spaces of the direct sum of $t$ uniform $q$-matroids in terms of the cyclic flats of the direct sum.

\begin{theorem}\label{thm:independent_direct_sum}
     {Let $k_1,\ldots, k_t,n_1,\ldots, n_t$ be positive integers, with $1\le k_i< n_i$ for $i \in [t]$.}
    Let $\mM=\mU_{k_1,n_1}(q)\oplus \cdots\oplus\mU_{k_t,n_t}(q)$ and for each $\mathcal{J}\subseteq [t]$ denote $k_{\mathcal J}:=\sum_{j\in \mathcal J}k_j$. Then $I\in\mI(\mM)$ if and only if
    $$ \dim\Big(I \cap \Big(\sum_{j \in \mathcal J} \iota_j\left(\Fq^{n_j}\right)\Big)\Big)\le k_{\mathcal J},  \qquad \mbox{ for every } \mathcal J \subseteq [t].$$
\end{theorem}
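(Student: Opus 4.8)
The plan is to apply the characterization of independent spaces via cyclic flats, namely Lemma \ref{lem:indep_cyclic_flats}, together with the explicit description of the cyclic flats of $\mM$ and their rank values recalled just above the statement. By Lemma \ref{lem:indep_cyclic_flats}, we have $I \in \mI(\mM)$ if and only if $\dim(I \cap X) \leq \rho(X)$ for every cyclic flat $X \in \mZ(\mM)$. Thus the whole argument reduces to rewriting this condition explicitly in terms of the subsets $\mathcal{J} \subseteq [t]$.

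First I would set up the bijection between cyclic flats and subsets of $[t]$. Combining Lemmas \ref{lem:uniform_cyclic_flats} and \ref{lem:sum_cyclic_flats}, every cyclic flat of $\mM$ is of the form $\bigoplus_{i=1}^t Z_i$ with $Z_i \in \{\langle 0\rangle, \Fq^{n_i}\}$. Associating to such a cyclic flat the subset $\mathcal{J} = \{i \in [t] : Z_i = \Fq^{n_i}\}$ gives a bijection between $\mZ(\mM)$ and the power set of $[t]$, under which the cyclic flat indexed by $\mathcal{J}$ is exactly $\sum_{j \in \mathcal{J}} \iota_j(\Fq^{n_j})$, since the embeddings $\iota_j$ place the summands in disjoint coordinate blocks.

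Next I would compute the rank of this cyclic flat. Using the additivity of the rank over the summands, that is the formula $\rho(\bigoplus_{i=1}^t Z_i) = \sum_{i=1}^t \rho_i(Z_i)$ recalled above from \cite[Theorem 45]{ceria2021direct}, together with $\rho_i(\langle 0\rangle) = 0$ and $\rho_j(\Fq^{n_j}) = \min\{k_j, n_j\} = k_j$ (the latter using the standing assumption $k_j < n_j$), I obtain that the rank of the cyclic flat indexed by $\mathcal{J}$ equals $\sum_{j \in \mathcal{J}} k_j = k_{\mathcal{J}}$. Substituting this into the condition from Lemma \ref{lem:indep_cyclic_flats} turns ``$\dim(I \cap X) \leq \rho(X)$ for all cyclic flats $X$'' precisely into ``$\dim\big(I \cap \sum_{j \in \mathcal{J}} \iota_j(\Fq^{n_j})\big) \leq k_{\mathcal{J}}$ for all $\mathcal{J} \subseteq [t]$'', which is the asserted equivalence.

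Since each step is a direct substitution into an already established equivalence, I do not expect a serious obstacle. The only points requiring care are checking that the indexing $\mathcal{J} \mapsto \sum_{j\in\mathcal{J}}\iota_j(\Fq^{n_j})$ is indeed a bijection onto $\mZ(\mM)$, and verifying the rank computation in the boundary cases $\mathcal{J} = \emptyset$ (which yields the vacuous condition $\dim(I \cap \langle 0\rangle) \leq 0$) and $\mathcal{J} = [t]$ (which yields $\dim I \leq k_{[t]}$, the rank of $\mM$).
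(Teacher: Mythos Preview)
Your proof is correct and follows exactly the same approach as the paper: the paper's proof simply states that the result follows directly from Lemmas \ref{lem:indep_cyclic_flats} and \ref{lem:sum_cyclic_flats} together with the preceding discussion (i.e.\ Lemma \ref{lem:uniform_cyclic_flats} and the additivity of the rank on the cyclic flats), which is precisely what you have spelled out in detail.
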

\begin{proof}
    It follows directly from Lemmas \ref{lem:indep_cyclic_flats} and \ref{lem:sum_cyclic_flats} and the discussion above.
\end{proof}

Let $\mathbf{k}=(k_1,\ldots,k_t)\in\mathbb N^t$ and let $h,k$ be  positive integers such that $k=k_1+\ldots+k_t$ and $1\leq h \leq k-1$, then 
we define $$\Lambda_{h,\mathbf{k}}:=\left\{ V : V \textnormal{ is } \F_{q^m}\textnormal{-subspace of } \F_{q^m}^k,  \dim_{\Fm}(V)=h \textnormal{ and } \iota_i(\Fm^{{k_i}})\not\subseteq V \mbox{ for } i\in [t]\right\}.$$ We need the following technical lemma for the proof of the main result of this section.

\begin{lemma}\label{lem:boxpropgen}
Let $k_1,\ldots,k_t,k,n_1,\ldots,n_t,m,r$ be positive integers, with $1\le k_i< n_i\le m$ for $i \in [t]$ and $k=k_1+\ldots+k_t$. Let $\mathcal{S}_i$ be a $[n_i,k_i]_{q^m/q}$ system for $i \in [t]$.
If  $\mathcal{S}=\mathcal{S}_1\oplus\ldots\oplus\mathcal{S}_t$ is $(\Lambda_{k-1,\mathbf{k}},r)$-evasive, then $\mathcal{S}$ is $(\Lambda_{l,\mathbf{k}},r-k+1+l)$-evasive for any $l \in [k-1]$.
 {
In particular, if $\mS$ is $(\Lambda_{k-1,\mathbf{k}},k-1)$-evasive, then $\mathcal{S}$ is $(\Lambda_{l,\mathbf{k}},l)$-evasive for any $l \in [k-1]$.
}
\end{lemma}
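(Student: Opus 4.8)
The plan is to prove the statement by downward induction on $l$, reducing everything to a single ``one-step'' extension. The base case $l=k-1$ is exactly the hypothesis, since $r-k+1+(k-1)=r$. For the inductive step, suppose the conclusion holds for $l+1$, where $1\le l\le k-2$ (so that $l+1\le k-1$); I want to deduce it for $l$. Fix $W\in\Lambda_{l,\mathbf{k}}$ and abbreviate $g(V):=\wt_{\mS}(V)=\dim_{\Fq}(\mS\cap V)$. The goal is to produce a space $W'$ with $W\subseteq W'$, $\dim_{\Fm}W'=l+1$, $W'\in\Lambda_{l+1,\mathbf{k}}$, and $g(W')\ge g(W)+1$. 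Granting this, the inductive hypothesis gives $g(W')\le r-k+1+(l+1)$, whence $g(W)\le g(W')-1\le r-k+1+l$, as required.

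The increment $g(W')\ge g(W)+1$ is the easy half. Since $\mS$ is a $q$-system, $\langle\mS\rangle_{\Fm}=\Fm^k$, so $\mS$ is contained in no proper $\Fm$-subspace; in particular there is $s\in\mS\setminus W$. Setting $W'=W+\langle s\rangle_{\Fm}$, we have $s\in(\mS\cap W')\setminus(\mS\cap W)$, and since $\mS\cap W$ is an $\Fq$-subspace this forces $g(W')\ge g(W)+1$. Conceptually, the function $V\mapsto g(V)-\dim_{\Fm}V$ is nondecreasing along any chain of one-dimensional $\Fm$-extensions each absorbing a new point of $\mS$; the induction is just the statement that such a chain can be run from $W$ up to a block-free hyperplane.

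The real difficulty is to choose $s$ so that $W'$ stays in $\Lambda_{l+1,\mathbf{k}}$, i.e. still contains no full block $\iota_i(\Fm^{k_i})$. Adding a single $\Fm$-dimension can only complete a block $i$ that is already almost contained, namely one with $\dim_{\Fm}(\iota_i(\Fm^{k_i})\cap W)=k_i-1$; for each such threatening block let $b_i$ span the missing direction, so that $W'\supseteq\iota_i(\Fm^{k_i})$ exactly when $s\in W+\langle b_i\rangle_{\Fm}$. Hence a valid $s$ exists precisely when $\mS$ is not covered by the finitely many proper $\Fm$-subspaces $W+\langle b_i\rangle_{\Fm}$, each of $\Fm$-dimension $l+1<k$ and therefore meeting $\mS$ in a proper $\Fq$-subspace. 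I expect this non-covering statement to be the main obstacle: a crude union bound, or the classical fact that an $\Fq$-space is never a union of at most $q$ proper subspaces, only settles the case $t<q$, and the last step $l=k-2$ shows this threshold is essentially sharp for naive counting.

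To handle all $t$ and all $q$, the plan is to exploit the direct-sum structure $\mS=\bigoplus_i\mS_i$ together with the standing hypothesis $n_i>k_i$. Projecting away block $i$ kills $b_i$, so $s\in W+\langle b_i\rangle_{\Fm}$ forces the image of $s$ under this projection to lie in the image of $W$; since block $i$ is threatening, that image of $W$ has $\Fm$-dimension $l-k_i+1$, which is strictly less than $k-k_i=\dim_{\Fm}\big(\bigoplus_{j\ne i}\Fm^{k_j}\big)$ exactly because $l\le k-2$. Thus the forbidden set for block $i$ constrains only the coordinates outside block $i$ and leaves $\mS_i$ entirely free, and because each $\mS_i$ has $\Fq$-dimension $n_i>k_i$ there is enough room to choose the components $s_j\in\mS_j$ so that $s$ avoids $W+\langle b_i\rangle_{\Fm}$ simultaneously for every threatening $i$. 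The rank-one case $k_i=1$, where this reduces to picking an $s$ whose block-components are sufficiently spread out, is the representative situation and already shows why the hypotheses $n_i>k_i$ cannot be dropped. Carrying out this selection carefully is the technical heart of the argument; once a valid $s$ is found, the previous two paragraphs close the induction.
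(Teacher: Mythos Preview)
Your inductive framework is correct and matches the paper exactly: downward induction on $l$, and for each $W\in\Lambda_{l,\mathbf k}$ produce $W'\in\Lambda_{l+1,\mathbf k}$ with $W\subseteq W'$ and $\wt_{\mS}(W')\ge\wt_{\mS}(W)+1$. Your identification of the only real obstacle---keeping $W'$ from swallowing a block $\iota_i(\Fm^{k_i})$, which can only happen for ``threatening'' blocks with $\dim_{\Fm}(W\cap\iota_i(\Fm^{k_i}))=k_i-1$---is also exactly right, and your reformulation $W'\supseteq\iota_i(\Fm^{k_i})\Leftrightarrow s\in W+\iota_i(\Fm^{k_i})$ is correct.

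Where the proposal falls short is in the final paragraph. You observe correctly that the block-$i$ constraint depends only on $(s_j)_{j\ne i}$ and leaves $s_i$ free, but then you invoke $n_i>k_i$ as providing ``enough room''. This is misaligned: the freedom in $s_i$ cannot help you with the block-$i$ constraint, since that constraint does not involve $s_i$ at all; and for the remaining constraints no inequality of the form $n_j>k_j$ controls the number of simultaneous avoidance conditions (there can be up to $t$ threatening blocks, and the naive union bound you already noted only gives $t\le q$). You yourself flag this as ``the technical heart of the argument'' and leave it unfinished; as stated, the sketch does not go through.

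The paper resolves this step by an entirely different, concrete case analysis rather than a covering argument. If some block $i$ has $\dim_{\Fm}(W\cap\iota_i(\Fm^{k_i}))\le k_i-2$, one adds a vector $v\in\mS_i\setminus W$. If \emph{every} block is threatening, one looks at the pairwise spaces $T_i=\iota_1(\Fm^{k_1})+\iota_i(\Fm^{k_i})$: either some $i\ge 2$ satisfies $\dim_{\Fm}(W\cap T_i)=k_1+k_i-2$, in which case one takes $s=v_1+v_i$ with $v_1\in\mS_1\setminus W$, $v_i\in\mS_i\setminus W$ and uses $W\cap\langle v_1,v_i\rangle_{\Fm}=0$; or every pair is ``tight'', which forces $\dim_{\Fm}W=k-1$, contradicting $l\le k-2$. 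None of this uses the strict inequality $n_i>k_i$; only $\langle\mS_i\rangle_{\Fm}=\Fm^{k_i}$ is needed. So the mechanism you propose is not the one that actually works, and you should replace your last paragraph by this two-level case split.
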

\begin{proof}
We prove the statement by induction on $k-1-l$. 

If $k-1-l=0$, then there is nothing to prove and the assertion is clearly true. 

Suppose that $l<k-1$ and assume that the claim holds true for $l+1$.
Let us take any subspace $T \in \Lambda_{l,\mathbf{k}}$.  We will show that there exists a subspace $T'\in \Lambda_{l+1,\mathbf{k}}$ containing $T$ and such that
\begin{equation}\label{eq:condbox2} 
\dim_{\Fq}(T'\cap \mathcal{S})\geq \dim_{\Fq}(T\cap \mathcal{S})+1. 
\end{equation}
This will imply the statement, since 
$$\dim_{\Fq}(T\cap \mathcal{S})\leq \dim_{\Fq}(T'\cap \mathcal{S})-1 \le r-k+1+l+1-1=r-k+1+l.$$

We divide the proof in two cases.

\noindent \underline{\textbf{Case I.}} First, suppose that there exists $i \in [t]$ such that 
\[ \dim_{\F_{q^m}}(T\cap \iota_i( \Fm^{{k_i}}))<k_i-1.\]
Since $\mathcal{S}_i$ is an $[n_i,k_i]_{q^m/q}$ system contained in $\iota_i( \Fm^{{k_i}})$ there exists $v \in \mathcal{S}_i\setminus T$. Clearly, $T'=T+ \langle v \rangle_{\Fm} \in \Lambda_{l+1,\mathbf{k}}$ and satisfies \eqref{eq:condbox2}. 

\noindent \underline{\textbf{Case II.}} Now assume that for  {every} $i \in [t]$ we have that 
\[ \dim_{\F_{q^m}}(T\cap \iota_i( \Fm^{{k_i}}))=k_i-1.\]
So, $l\geq k-t$ and $T\supseteq (T\cap \iota_1( \Fm^{{k_1}}))+ (T\cap \iota_2( \Fm^{{k_2}}))+\ldots+(T\cap \iota_t( \Fm^{{k_t}}))$. 

{For every $i \in \{2,\ldots,t\}$, consider $T_i=\iota_1( \Fm^{{k_1}})+\ldots+\iota_i( \Fm^{{k_i}})$. Then, $\dim_{\Fm}(T_i)=k_1+\ldots +k_i$ and, since  {$T\cap T_i\supseteq (T\cap T_{i-1})+(T\cap \iota_i(\Fm^{k_i}))$, }
\begin{equation}\label{eq:dimTcapTi}
    \dim_{\Fm}(T\cap T_i)\in\{ \dim_{\Fm}(T\cap T_{i-1})+k_i-1, \ldots, k_1+\ldots+k_{i}-1\}. 
\end{equation} 
We further distinguish two subcases.

\noindent \underline{\textbf{Case II.a.}}
Assume that there exists $i \in \{1,\ldots, t\}$ such that $\dim_{\Fm}(T\cap T_{i})=\dim_{\Fm}(T\cap T_{i-1})+k_{i}-1$. Hence, 
 {\[ T\cap T_i=(T\cap T_{i-1})+(T \cap \iota_i( \Fm^{{k_i}})). \]
Let $v_1 \in \mathcal{S}_1\oplus\ldots\oplus \mS_{i-1}$ and $v_i\in \mathcal{S}_i$ such that $v_1,v_i \notin T$. Note that such vectors exist, otherwise $T$ would contain at least one of the $\mathcal{S}_1,\ldots,\mathcal{S}_i$, a contradiction as $T \in \Lambda_{l,\mathbf{k}}$.
Also, we have that $\langle v_1,v_i\rangle_{\Fm} \cap T=\{0\}$. Indeed, if $\alpha v_1 +\beta v_i \in \langle v_1,v_i\rangle_{\Fm} \cap T$ with $\alpha,\beta\in\F_{q^m}$, and $(\alpha,\beta)\ne(0,0)$, then $\alpha v_1 \in T\cap T_{i-1}$ and $\beta v_i \in T \cap \iota_i( \Fm^{{k_i}})$, a contradiction as $v_1,v_i \notin T$.}
Therefore, $T'=T\oplus \la v_1+v_i\ra_{\F_{q^m}} \in \Lambda_{l+1,\mathbf{k}}$ and satisfies \eqref{eq:condbox2}.

\noindent \underline{\textbf{Case II.b.}} Suppose now that for every $i \in \{2,\ldots,t\}$ we have  {$\dim_{\Fm}(T\cap T_i)\geq \dim_{\Fm}(T\cap T_{i-1})+k_i$.} Observe that this implies that $\dim_{\Fm}(T\cap T_{2})\geq \dim_{\Fm}(T\cap T_1)+k_2=k_1+k_2-1$, and inductively, for every $i\in [t]$,  $\dim_{\Fm}(T\cap T_i)\geq k_1+\ldots+k_{i}-1$, meaning equality due to \eqref{eq:dimTcapTi}. Hence, the equality also holds for $i=t$, that is, 
$$\dim_{\Fm}(T\cap T_t)=k_1+\ldots+k_t-1=k-1,$$
which is a contradiction to the hypothesis that $\dim_{\Fm}(T)=l<k-1$.
}
\end{proof}

The next result is the main theorem of this section. It provides a geometric characterization of the independent spaces of the direct sum of $t$ many uniform $q$-matroids.


\begin{theorem}\label{thm:charact_independence}
Let $k_1,\ldots,k_t,k,n_1,\ldots,n_t, n, m$ be positive integers, with $1\le k_i< n_i\le m$ for $i \in [t]$ and $k=k_1+\ldots+k_t$, $n=n_1+\ldots+n_t$. Let $\mS_i$ be an $\Fm$-representation for $\mU_{k_i,n_i}(q)$ for each $i\in [t]$. Define the $\Fmk$ system
$$\mS:=\bigoplus_{i=1}^t\mS_i.$$
Moreover, for each $\mathcal J\subseteq [t]$, denote by $k_{\mathcal J}:=\sum_{j\in \mathcal J}k_j$ and $\mS_{\mathcal J}:=\sum_{j\in \mathcal J}\iota_j(\mS_j)$.
Then the following statements are equivalent.
\begin{enumerate}
    \item $(\mS,\rho_{\mS})$ is an $\Fm$-representation for $\mU_{k_1,n_1}(q)\oplus\ldots\oplus\mU_{k_t,n_t}(q)$.
    \item For every $\F_q$-subspace $I\subseteq \mS$ it holds that
    \begin{equation}\label{eq:main_th_indep_general}
        \rho_{\mS}(I)=\dim_{\F_q} (I) \Longleftrightarrow 
            \dim_{\Fq}(I\cap \mS_{\mathcal J})\le k_{\mathcal J}, \; \forall \; \mathcal J\subseteq[t].
    \end{equation}
    \item $\mathcal{S}$ is $(\Lambda_{k-1,\mathbf{k}},k-1)$-evasive, where $\mathbf{k}=(k_1,\ldots,k_t)$.
\end{enumerate}
\end{theorem}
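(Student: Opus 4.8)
The plan is to prove the equivalence by establishing the cycle $(1)\Leftrightarrow(2)$ first, which is essentially a translation of the representability condition into the language of $q$-systems, and then $(2)\Leftrightarrow(3)$, where the evasiveness condition enters. For the equivalence $(1)\Leftrightarrow(2)$, recall that by Theorem \ref{thm:independentrank} the $q$-matroid $(\mS,\rho_\mS)$ has as independent spaces precisely the $\Fm$-independent subspaces $I\subseteq\mS$, i.e. those with $\dim_{\Fq}(I)=\rho_\mS(I)$. On the other hand, Theorem \ref{thm:independent_direct_sum} tells us exactly which spaces are independent in $\mM=\mU_{k_1,n_1}(q)\oplus\cdots\oplus\mU_{k_t,n_t}(q)$, namely those satisfying $\dim(I\cap\sum_{j\in\mathcal J}\iota_j(\Fq^{n_j}))\le k_{\mathcal J}$ for all $\mathcal J\subseteq[t]$. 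The key observation is that, since $\mS_j$ is a representation of $\mU_{k_j,n_j}(q)$, we may identify (via the isomorphism $\psi_G$ from Theorem \ref{thm:psiG}, applied blockwise) the ground space $\Fq^{n_j}$ with $\mS_j$ itself, so that $\iota_j(\Fq^{n_j})$ corresponds to $\iota_j(\mS_j)$ and $\sum_{j\in\mathcal J}\iota_j(\Fq^{n_j})$ corresponds to $\mS_{\mathcal J}$. Under this identification, the two characterizations of independence coincide, and since a $q$-matroid is determined by its independent spaces, $(\mS,\rho_\mS)$ represents $\mM$ if and only if condition \eqref{eq:main_th_indep_general} holds. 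This gives $(1)\Leftrightarrow(2)$.

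For $(2)\Rightarrow(3)$, I would argue by contraposition. Suppose $\mS$ is not $(\Lambda_{k-1,\mathbf k},k-1)$-evasive, so there is an $\Fm$-hyperplane $V\in\Lambda_{k-1,\mathbf k}$ with $\dim_{\Fq}(\mS\cap V)\ge k$. Set $I:=\mS\cap V$. One checks that the right-hand side of \eqref{eq:main_th_indep_general} still holds for this $I$: because $V$ does not contain any $\iota_i(\Fm^{k_i})$, the intersection of $I$ with each $\mS_{\mathcal J}$ is constrained to lie in the proper subspace $V\cap\mS_{\mathcal J}$, and a dimension count (using that $\langle\mS_{\mathcal J}\rangle_{\Fm}=\iota_{\mathcal J}(\Fm^{k_{\mathcal J}})$ meets $V$ in dimension at most $k_{\mathcal J}-1+(\dim V -\dim\langle\mS_{\mathcal J}\rangle_{\Fm}\cap$-corrections$)$) forces $\dim_{\Fq}(I\cap\mS_{\mathcal J})\le k_{\mathcal J}$. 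But $\dim_{\Fq}(I)\ge k>\rho_{\mS}(I)$, since $\rho_{\mS}(I)=\dim_{\Fm}\langle I\rangle_{\Fm}\le\dim_{\Fm}V=k-1$; hence $I$ violates the equivalence \eqref{eq:main_th_indep_general}, contradicting $(2)$.

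For $(3)\Rightarrow(2)$, the forward implication ``$\rho_{\mS}(I)=\dim_{\Fq}(I)\Rightarrow$ weight bounds'' follows because any $\Fm$-independent $I$ has $\dim_{\Fm}\langle I\cap\mS_{\mathcal J}\rangle_{\Fm}=\dim_{\Fq}(I\cap\mS_{\mathcal J})$, and this span sits inside $\langle\mS_{\mathcal J}\rangle_{\Fm}$ which has $\Fm$-dimension $k_{\mathcal J}$, giving the bound directly without needing evasiveness. The substantive direction is the converse: assuming the weight bounds hold for $I$, I must show $I$ is $\Fm$-independent, i.e. $\dim_{\Fq}(I)=\dim_{\Fm}\langle I\rangle_{\Fm}$. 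Suppose not; then $I$ contains an $\Fq$-subspace whose $\Fm$-span is strictly smaller, which I would enclose in an $\Fm$-subspace of dimension $k-1$ avoiding all the $\iota_i(\Fm^{k_i})$, landing in $\Lambda_{k-1,\mathbf k}$, and then apply Lemma \ref{lem:boxpropgen} to propagate the evasiveness down from dimension $k-1$ to the relevant intermediate dimension, thereby bounding $\dim_{\Qq}(I)$ and reaching a contradiction with the assumption $\dim_{\Fq}(I)>\dim_{\Fm}\langle I\rangle_{\Fm}$.

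I expect the main obstacle to be this last converse direction, and specifically the bookkeeping in constructing the witnessing subspace in $\Lambda_{k-1,\mathbf k}$ while simultaneously keeping track of which $\mS_{\mathcal J}$ the dependent part of $I$ interacts with; the role of Lemma \ref{lem:boxpropgen} is precisely to convert the single hypothesis at the top dimension $k-1$ into usable bounds at all lower dimensions $l$, so the crux is to set up the reduction so that the $(\Lambda_{l,\mathbf k},r-k+1+l)$-evasiveness it provides is exactly what rules out the failure of $\Fm$-independence.
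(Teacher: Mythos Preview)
Your plan for $(1)\Leftrightarrow(2)$ and for the forward implication of \eqref{eq:main_th_indep_general} in $(3)\Rightarrow(2)$ matches the paper. The genuine gap is in the reverse implication of \eqref{eq:main_th_indep_general}. You propose to take a dependent $I$ (satisfying the weight bounds), enclose $\Gamma:=\langle I\rangle_{\Fm}$ in some member of $\Lambda_{k-1,\mathbf k}$, and invoke Lemma~\ref{lem:boxpropgen}. This works only when $\Gamma$ contains none of the blocks $\iota_i(\Fm^{k_i})$: if $\Gamma\supseteq\iota_i(\Fm^{k_i})$ for some $i$, then every $\Fm$-superspace of $\Gamma$ also contains that block and hence cannot lie in $\Lambda_{k-1,\mathbf k}$, so the evasiveness hypothesis says nothing. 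This is not a bookkeeping issue but a genuinely separate case. The paper handles it explicitly (its Case~II): after reordering so that $\Gamma\supseteq\iota_i(\Fm^{k_i})$ exactly for $i\in[a]$ with $0<a<t$, one decomposes $I$ relative to the splitting $\mS_{[a]}\oplus\mS_{[t]\setminus[a]}$ and projects to obtain $X_2\subseteq\mS_{[t]\setminus[a]}$. Since $\langle X_2\rangle_{\Fm}$ contains no block, the already-established Case~I yields $\rho_{\mS_{[t]\setminus[a]}}(X_2)=\min\{k_{[t]\setminus[a]},\dim_{\Fq} X_2\}$; combining this with $\rho_\mS(I)=k_{[a]}+\rho_{\mS_{[t]\setminus[a]}}(X_2)$ and the weight bound $\dim_{\Fq}(I\cap\mS_{[a]})\le k_{[a]}$ produces the contradiction. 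Your outline does not anticipate this reduction.

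A smaller issue in $(2)\Rightarrow(3)$: taking $I=\mS\cap V$ can give $\dim_{\Fq}(I)>k$, in which case the weight bound for $\mathcal J=[t]$ already fails, both sides of \eqref{eq:main_th_indep_general} are false for this $I$, and no contradiction results. The paper instead selects $I\subseteq\bar H\cap\mS$ with $\dim_{\Fq}(I)=k$. Moreover, your ``dimension count'' for proper $\mathcal J$ only bounds the $\Fm$-dimension of $V\cap\langle\mS_{\mathcal J}\rangle_{\Fm}$, not the $\Fq$-dimension of $V\cap\mS_{\mathcal J}$; the latter need not be at most $k_{\mathcal J}$ without further argument.
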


\begin{proof} \, 
  \noindent
        ~{\bf} \underline{$(1) \Longleftrightarrow (2)$}: Let us call $\mathcal M_1=(\mS,\rho_{\mS})$ and $\mathcal M_2=\mU_{k_1,n_1}(q)\oplus\ldots\oplus \mU_{k_t,n_t}(q)$. Then, $\mM_1\cong \mM_2$ if and only if there exists an invertible $\Fq$-linear map $\psi:\Fq^{n_1+\ldots+n_t}\longrightarrow\mS$ such that $\mI(\mM_1)=\mI(\psi(\mM_2))$. Let $G$ be any generator matrix associated to the $\Fmk$ system $\mS$ of the form
        $$G=\begin{pmatrix}
            G_1 & 0 & & 0\\
            0 & G_2 &\phantom{\ddots}
            \\
             &  & \ddots & 0 \\
            0 & \phantom{\ddots} & 0 & G_t 
        \end{pmatrix},$$
        where $G_i\in \Fm^{k_i\times n_i}$, is a generator matrix associated with $\mS_i$, for each $i \in [t]$. Then, using the characterization of independent spaces for $\mM_1$ given in Remark \ref{rem:independent_spaces}, and the characterization of independent spaces for $\mM_2$ derived in Theorem \ref{thm:independent_direct_sum}, together with the map 
        $\psi=\psi_G$ as in Theorem \ref{thm:psiG}, we conclude. 
        
\noindent
    ~{\bf} \underline{$(2) \Longrightarrow (3)$}: Assume towards a contradiction that $\mathcal{S}$ is not $(\Lambda_{k-1,\mathbf{k}},k-1)$-evasive. Then, there exists an $\F_{q^m}$-hyperplane $\bar{H}\subseteq\F_{q^m}^{k}$ such that $\langle \mS_i\rangle_{\F_{q^m}}\not\subseteq \bar{H}$ for each $i\in [t]$, and $\dim_{\F_q} (\bar{H}\cap \mS) \geq k$. Let $T:=\bar{H}\cap \mS$ and observe that $\rho_{\mS}(T)<k\le \dim_{\Fq}(T)$. If $T$ is such that $\dim_{\Fq}(T\cap \mS_{\mathcal J})\le k_{\mathcal J}$ for every $\mathcal J \subseteq [t]$,  then $T$ contradicts Eq. \eqref{eq:main_th_indep_general}. Otherwise, let $\overline{\mathcal J}$ be a minimal subset of $[t]$ such that $\dim_{\Fq}(T\cap \mS_{\overline{\mathcal J}})>k_{\overline{\mathcal I}}$, and take any $I\subseteq T\cap \mS_{\overline{\mathcal J}}$ with $\dim_{\Fq}(I)=k_{\overline{\mathcal J}}$. For every $j \in [t]\setminus \overline{\mathcal J}$, select an independent space $I_j\subseteq \mS_{j}$ with $\dim_{\Fq}(I_j)=\rho_{\mS_j}(I)=\langle I_j\rangle_{\Fm}=k_j$. It can be easily verified that, by construction, the space 
    $$ \widetilde{I}:=I\oplus \bigoplus_{j \notin \overline{\mathcal J}}\iota_j(I_j)$$
    is such that 
    $\dim_{\Fq}(\widetilde{I})=k>\rho_{\mS}(\widetilde{I})$,
    contradicting Eq. \eqref{eq:main_th_indep_general}.

    \noindent
        ~{\bf} \underline{$(3) \Longrightarrow (2)$}: 
    First of all, we observe that the implication ``$\Longrightarrow$'' in Eq. \eqref{eq:main_th_indep_general} is always true. Indeed, if $\dim_{\Fq}(I\cap \mS_{\mathcal J})>k_{\mathcal J}$ for some $\mathcal J\subseteq [t]$, then, since $\rho_{\mS_{\mathcal J}}(I\cap \mS_{\mathcal J})\le k_{\mathcal J}$, we have that $I\cap \mS_{\mathcal J}$ is not $\Fm$-independent, and hence $I$ cannot be $\Fm$-independent. Therefore, $\rho_{\mS}(I)<\dim_{\Fq}(I)$, yielding a contradiction.
    
    Thus, we only need to show the opposite implication. 
    Let $I$ be an $\Fq$-subspace of $\mS$ and let $\Gamma:=\langle I\rangle_{\Fm}$.   
          Observe that if $\Gamma \supseteq \iota_i( \Fm^{{k_i}})$ for each $i \in [t]$, then $\Gamma=\Fm^k$, and $\rho_{\mS}(I)=k$. Hence, since we have by hypothesis that $\dim_{\Fq}(I)=\dim_{\Fq}(I\cap \mS_{[t]})\le k_{[t]}=k$, we automatically obtain equality, and $\rho_{\mS}(I)=k=\dim_{\Fq}(I)$. Therefore, we may assume that $\Gamma$ does not contain all the spaces $\iota_i( \Fm^{{k_i}})$.

        Assume by contradiction that \eqref{eq:main_th_indep_general} is not satisfied. Then there exists an $\Fq$-subspace $I\subseteq \mS$ such that $\dim_{\Fq}(I)\le k$, $\dim_{\Fq}(I\cap \mS_{\mathcal J}) \leq k_\mathcal J$ for each $\mathcal J \subseteq [t]$ and $\rho_{\mS}(I)<\dim_{\Fq}(I)$. 
        We divide the proof in two cases:
        
        \noindent \underline{\textbf{Case I.}} If $\Gamma$ does not contain any of the $\Fm$-subspaces $\iota_i( \Fm^{{k_i}})$, then $\ell:=\rho_{\mS}(I) <k$ and $\Gamma\in \Lambda_{\ell,\mathbf{k}}$. This  {implies that $\mS$ is not $(\Lambda_{\ell,\mathbf{k}},\ell)$-evasive, contradicting the hypothesis of $\mS$ being $(\Lambda_{k-1,\mathbf{k}},k-1)$-evasive}, due to Lemma \ref{lem:boxpropgen}.        Furthermore, this also shows that for every $\Fq$-subspace $I'\subseteq\mS$, if $\Gamma':=\langle I'\rangle_{\Fm}$ does not contain any space of the form $\iota_i( \Fm^{{k_i}})$ for $i \in [t]$, then \begin{equation}\label{eq:def_rho_S}         \rho_{\mS}(I')=\min\{k,\dim_{\Fq}(I')\},     \end{equation}      {otherwise, as before, we can apply again Lemma \ref{lem:boxpropgen} and get a contradiction.}
        {In fact, we just proved that if $\rho_\mS(I')<\dim_{\Fq}(I')$ then $\rho_\mS(I')=k$.}
        
        \noindent \underline{\textbf{Case II.}} 
        Up to a permutation of the set $[t]$, we can assume without loss of generality  that $\Gamma$ contains the $\Fm$-subspaces $\iota_i( \Fm^{{k_i}})$, for each $i\in[a]$, for some  $a$ such that $0<a<t$ (note that $a=0$ is Case I, and $a=t$ was already proved before). Let  {$\dim_{\Fq}(I)=r+s+d$} with $r=\dim_{\Fq}(I\cap \mS_{[a]} )\leq k_{[a]}$ and $s=\dim_{\Fq}(I\cap \mS_{[t]\setminus[a]})\leq k_{[t]\setminus[a]}$. $I$ can be written as $$I=\langle (v_1 \mid 0),\ldots, (v_{r} \mid 0)\rangle_{\Fq}+\langle (0  \mid u_1),\ldots, (0 \mid u_s)\rangle_{\Fq}+\langle (w_1 \mid z_1),\ldots, (w_d \mid z_d)\rangle_{\Fq}.$$ 
        Moreover, the following hold:
        \begin{enumerate}
            \item $X_1:=\langle v_1,\ldots,v_{r},w_1,\ldots,w_d\rangle_{\Fq}  \subseteq \mS_{[a]}$ and $\dim_{\Fq}(X_1)=r+d$; 
            \item $X_2:=\langle u_1,\ldots,u_{s},z_1,\ldots,z_d\rangle_{\Fq} \subseteq \mS_{[t]\setminus[a]}$ and $\dim_{\Fq}(X_2)=s+d$. 
        \end{enumerate}
        Note that $\Gamma=\langle I\rangle_{\Fm}=\bigoplus_{i \in [a]}\iota_i(\F_{q^m}^{k_i})+(\langle0\rangle \oplus \langle X_2\rangle_{\Fm})$,
     where $\langle X_2\rangle_{\Fm}$ does not contain any of the spaces of the form $\iota_i(\Fm^{k_i})$ for $i \in [t]\setminus [a]$. Thus, by \eqref{eq:def_rho_S} it must hold that \begin{equation}\label{eq:inductive}\rho_{\mS_{[t]\setminus[a]}}(X_2)=\min \{ k_{[t]\setminus[a]},\dim_{\F_q}(X_2) \}.\end{equation}
        Moreover, by the form of $\Gamma$, we have        $$\rho_{\mS}(I)=k_{[a]}+\dim_{\Fm}(\langle X_2\rangle_{\Fm})=k_{[a]}+\rho_{\mS_{[t]\setminus[a]}}(X_2).$$
        Combining this with the assumption that $r+s+d=\dim_{\Fq}(I)>\rho_{\mS}(I)$, we obtain
        $$ \rho_{\mS_{[t]\setminus[a]}}(X_2)<s+d+r-k_{[a]}\leq k-k_{[a]}= k_{[t]\setminus[a]}.$$
        In addition, $ \rho_{\mS_{[t]\setminus[a]}}(X_2)<s+d+r-k_{[a]}\le  s+d =\dim_{\F_q}(X_2)$.
        These two last observations contradict \eqref{eq:inductive}, since they imply $\rho_{\mathcal{S}_{[t]\setminus[a]}}(X_2)< \min \{ k_{[t]\setminus[a]},\dim_{\F_q}(X_2) \}$.
\end{proof}

\begin{example}
Let $\mM = \mU_{1,2}(q)$. Then $\mM$ is representable over $\F_{q^2}$, while it was shown in \cite[Theorem 3.4]{gluesing2022representability} that $\mM\oplus \mM$ is representable over $\F_{q^m}$ if and only if $m \geq 4$. Let $q=2$, $m\ge4$, and let $\F_{2^4}=\F_2(\alpha)$. We now show using Theorem \ref{thm:charact_independence} that a representation of $\mM\oplus \mM$ is given by the $[4,2]_{2^4/2}$ system 
$$\mS = \{(a,b)\in\F_{2^m}^2 \; : \; a\in\langle 1,\alpha \rangle_{\F_2}, b\in\langle 1,\alpha^2 \rangle_{\F_2}\}\leq \F_{2^m}^4.$$
Let $(a_1,b_1), (a_2,b_2)\in \mS$, with $a_1,a_2,b_1,b_2\ne 0$, and assume that $(a_1,b_1)=\gamma (a_2,b_2)$ for some $\gamma \in \F_{2^m}$. Then $a_1b_2=a_2b_1$. By definition of $\mS$, for each $i\in\{1,2\}$ we can write
$$a_i=\lambda_{i,1}+\lambda_{i,2}\alpha, \quad b_i=\mu_{i,1}+\mu_{i,2}\alpha^2,$$
for some $\lambda_{i,j},\mu_{i,j}\in \F_2$, which implies
\begin{equation}\label{eq:example}(\lambda_{1,1}+\lambda_{1,2}\alpha)(\mu_{2,1}+\mu_{2,2}\alpha^2)= (\lambda_{2,1}+\lambda_{2,2}\alpha)(\mu_{1,1}+\mu_{1,2}\alpha^2).\end{equation}
Since $m\geq 4$ we have that $1,\alpha,\alpha^2,\alpha^3$ are $\F_2$-linearly independent, and by equating their coefficients in \eqref{eq:example}, we obtain
$$\begin{cases}
    \lambda_{1,1}\mu_{2,1}=\lambda_{2,1}\mu_{1,1}, \\
    \lambda_{1,2}\mu_{2,1}=\lambda_{2,2}\mu_{1,1}, \\
    \lambda_{1,1}\mu_{2,2}=\lambda_{2,1}\mu_{1,2}, \\
    \lambda_{1,2}\mu_{2,2}=\lambda_{2,2}\mu_{1,2}. 
\end{cases}$$
Using the further assumption that $(\lambda_{i,1},\lambda_{i,2})\neq (0,0)$ and $(\mu_{i,1},\mu_{i,2})\neq (0,0)$ for $i \in \{1,2\}$  the only possible solutions satisfy  
$$\begin{cases}\lambda_{1,j}=\lambda_{2,j} & \mbox{ for } j \in \{1,2\},\\
\mu_{1,j}=\mu_{2,j} & \mbox{ for } j \in \{1,2\}.
\end{cases}$$
In other words, we get $\gamma=1$ and $(a_1,b_1)=(a_2,b_2)$. This shows that, for every $(a,b) \in \mS$ with $a,b \neq 0$, we have
$$\dim_{\F_{2}}(\mathcal{S}\cap\langle (a,b)\rangle_{\F_{2^4}})=1,$$
and hence $\mS$ is $(\Lambda_{(1,1),1},1)$-evasive. Thus, $\mS$ is an $\F_{2^4}$-representation of $\mM\oplus \mM$. We will see a much more general version of this constructive proof for the representability of $\mU_{1,n_1}(q)\oplus \mU_{1,n_2}(q)$ in Proposition \ref{prop:m_ge_n1n2}.
\end{example}

\section{A representation of the direct sum of uniform \emph{q}-matroids}\label{sec:construction}

In this section we will show that the direct sum of $t$ many uniform $q$-matroids is always representable, independent of their ranks and their heights. We will do this by using  the following arguments.
First of all, we notice that, by Theorem \ref{thm:GL-J-representation}, if such a  direct sum is representable, then it must be represented by the direct sum of the representations of its summands. Then, we  construct $q$-systems $\mS_1,\ldots,\mS_t$ whose direct sum satisfies Theorem \ref{thm:charact_independence}(3).

We remark that the direct sums of $q$-systems and their connection with rank-metric codes have been studied in \cite{bartoli2022exceptional}, with the name of \textbf{decomposable} $q$-systems, and in \cite{adriaensen2023minimum}, with the name of $q$-systems with \textbf{complementary subspaces}.

\begin{theorem}\label{thm:direct_sum_constr_general} 
Let $k_1, \ldots k_t, n_1,\ldots  n_t$ be positive integers such that $\gcd(n_i,n_j)=1$ for each $i\neq j$,  $k_i< n_i$ for each $i \in [t]$. Let $m=n_1\cdot\ldots\cdot n_t$ and let
\[\mS_i:=\{ (x,x^q,\ldots,x^{q^{k_i-1}}) \colon x \in \F_{q^{n_i}}\}, \qquad \mbox{ for each } i \in [t]\]
Then, $\mS:=\mS_1\oplus\ldots\oplus\mS_t$ is a $(\Lambda_{k-1,\mathbf{k}},k-1)$-evasive $\Fmk$ system, with $n=n_1+\ldots+n_t$, $k=k_1+\ldots+k_t$ and $\mathbf{k}=(k_1,\ldots,k_t)$.
\end{theorem}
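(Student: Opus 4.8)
The plan is to translate the evasiveness condition into a statement about ranks of codewords of a block-diagonal Gabidulin code, and then reduce everything to a single lower bound on the dimension of a sum of $\F_q$-subspaces, whose proof is where the coprimality of the $n_i$ must be used. First I would record that each $\mS_i$ is a genuine $\Fmk[n_i,k_i]$ system representing $\mU_{k_i,n_i}(q)$: it is the classical maximum scattered (Gabidulin) construction, which makes sense since $m=n_1\cdots n_t\geq n_i$ forces $\F_{q^{n_i}}\subseteq\Fm$, and in particular $\mS_i$ is $(k_i-1)$-scattered. This also settles $t=1$, where $\Lambda_{k-1,\mathbf{k}}$ is the set of all hyperplanes and $(k-1)$-scattered is exactly $(\Lambda_{k-1},k-1)$-evasiveness, giving the base of an induction on $t$.

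Next, for $V\in\Lambda_{k-1,\mathbf{k}}$ I would write $V=\langle u\rangle^{\perp}$ with $u=(u_1,\dots,u_t)\in\Fm^{k}$, observing that $\iota_i(\Fm^{k_i})\not\subseteq V$ is precisely the condition $u_i\neq 0$ for every $i$. Taking the block-diagonal generator matrix $G=\diag(G_1,\dots,G_t)$ of $\mS$ and applying Theorem~\ref{thm:psiG} yields
\[
\wt_{\mS}(V)=\dim_{\F_q}(\mS\cap V)=n-\rk(uG),\qquad uG=(u_1G_1,\dots,u_tG_t).
\]
Hence $(\Lambda_{k-1,\mathbf{k}},k-1)$-evasiveness is equivalent to $\rk(uG)\geq n-k+1$ whenever all blocks $u_i\neq 0$. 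Writing $L_i(X)=\sum_{j=0}^{k_i-1}u_{i,j}X^{q^j}$ for the linearized polynomial attached to $u_i$, each $u_iG_i$ is a nonzero codeword of the $[n_i,k_i]$ MRD code $\C_i$, its $\F_q$-span of entries is $U_i:=L_i(\F_{q^{n_i}})$, and $\rk(uG)=\dim_{\F_q}(U_1+\cdots+U_t)$. Thus the whole theorem reduces to the single inequality
\[
\dim_{\F_q}(U_1+\cdots+U_t)\ \geq\ n-k+1,\qquad U_i=L_i(\F_{q^{n_i}}),\ L_i\neq 0,\ \deg_qL_i<k_i.
\]

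The MRD property of each $\C_i$ gives $\dim_{\F_q}U_i=\rk(u_iG_i)\geq n_i-k_i+1$, so the naive sum of dimensions is at least $n-k+t$; the real content is that the pairwise overlaps of the $U_i$ cost at most $t-1$ in total. I expect this overlap control to be the main obstacle. The guiding model is the rank-one case $k_i=1$: there $U_i=u_{i,0}\F_{q^{n_i}}$ is a scaled field, and any two $\F_q$-independent elements of $U_i\cap U_j$ have quotient lying in $\F_{q^{n_i}}\cap\F_{q^{n_j}}=\F_q$ (coprimality), forcing the intersection to be at most one-dimensional; here the compositum structure $\Fm=\F_{q^{n_1}}\otimes_{\F_q}\cdots\otimes_{\F_q}\F_{q^{n_t}}$ is exactly what makes the fields meet only in $\F_q$. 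The difficulty in the general case is that when $L_i$ has trivial kernel on $\F_{q^{n_i}}$ its image $U_i$ is no longer a scaled field, so the clean ratio argument does not apply verbatim; one must instead exploit that $U_i$ is the image of a linearized map on $\F_{q^{n_i}}$ (for instance, when $L_i$ has a root in $\F_{q^{n_i}}$ one may factor it out and realize $U_i$ as a scalar times a subspace of $\F_{q^{n_i}}$, and such scalar-times-coprime-field configurations cannot overlap more than the field intersections permit).

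I would warn that the reduction to a per-step overlap bound is more delicate than it first appears: inducting on $t$ via $\dim(\sum_i U_i)=\dim(\sum_{i<t}U_i)+\dim U_t-\dim\!\big(U_t\cap\sum_{i<t}U_i\big)$ and feeding in only the inductive \emph{lower} bound on $\dim(\sum_{i<t}U_i)$ turns out to be equivalent to the very inequality one is trying to prove, because the slack available in the overlap depends on the actual (not the worst-case) value of $\dim(\sum_{i<t}U_i)$. Consequently I expect the argument to be genuinely global rather than a clean one-line induction: either a careful bookkeeping that tracks the kernels $d_i=\dim_{\F_q}\ker(L_i|_{\F_{q^{n_i}}})$ together with the field structure, or an appeal to the minimum-distance behaviour of decomposable $q$-systems with complementary subspaces studied in \cite{bartoli2022exceptional,adriaensen2023minimum}, which is precisely the setting flagged before the statement. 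The crux throughout is converting the arithmetic hypothesis $\gcd(n_i,n_j)=1$, together with $m=\prod_i n_i$, into the quantitative statement that supports of codewords coming from the different Gabidulin blocks are as independent as the field intersections $\F_{q^{n_i}}\cap\F_{q^{n_j}}=\F_q$ allow.
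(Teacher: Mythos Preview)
Your reduction is correct and in fact equivalent to what the paper proves: writing $L_i$ for your linearized polynomials and $T=\{(x_1,\dots,x_t)\in\prod_i\F_{q^{n_i}}:\sum_i L_i(x_i)=0\}$, rank--nullity gives $\dim_{\F_q}(U_1+\cdots+U_t)=n-\log_q|T|$, so your target inequality $\dim(U_1+\cdots+U_t)\ge n-k+1$ is exactly the paper's claim $|T|\le q^{k-1}$. Your diagnosis that a naive per-step overlap bound via induction on $t$ collapses is also correct. However, the proposal stops precisely at the point that constitutes the entire proof: you identify the inequality but do not prove it, and neither of the two suggested routes (``careful bookkeeping of the $d_i$'' or an appeal to \cite{bartoli2022exceptional,adriaensen2023minimum}) is carried out or even sketched in enough detail to see that it would succeed. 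As written this is a correct reformulation, not a proof.

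The idea you are missing is a concrete mechanism to exploit $\gcd(n_i,n_j)=1$ beyond the field-intersection heuristic. The paper works on the kernel side and uses a Frobenius trick: by the Chinese Remainder Theorem choose $h$ with $h\equiv 0\pmod{n_i}$ for $i<t$ and $h\equiv 1\pmod{n_t}$. Then the $q^h$-power map fixes every $x_i\in\F_{q^{n_i}}$ for $i<t$ but sends $x_t\mapsto x_t^q$. Raising the defining equation $\sum_i L_i(x_i)=0$ to the $q^h$-th power and subtracting the original kills the leading term of $L_1$ (after normalizing) while increasing the $q$-degree in $x_t$ by one, yielding a new equation whose solution set contains $T$ but whose parameters are strictly smaller in the inductive ordering $(t,k_1)$. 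A double induction (first on $t$, then on $k_1$) together with a case analysis on which of the resulting coefficient vectors vanish then gives $|T|\le q^{k-1}$. This Frobenius-plus-CRT manoeuvre is the genuine content of the coprimality hypothesis and is what your image-side formulation obscures; the argument \emph{is} inductive, but the induction lives on the equation $\sum_i L_i(x_i)=0$ rather than on the subspace sum $\sum_i U_i$.
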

\begin{proof}
 We first introduce the following notation. For a given vector $v=(v_1,\ldots,v_\ell)\in \Fm^\ell$, define the linearized polynomial
$$v(x):=\sum_{j=1}^\ell v_jx^{q^{j-1}},$$
and the vector $v^q:=(v_1^q,\ldots,v_\ell^q)$.
Since $k_i< n_i$, then $\mS_i$ is an $[n_i,k_i]_{q^m/q}$ system, for each $i\in [t]$. Thus, $\mS$ is an $\Fmk$ system.
Let us prove that $\wt_\mS(H)\le k-1$ for each hyperplane $H$ not containing any of the spaces $\iota_i( \Fm^{{k_i}})$, for $i \in [t]$. Observe that such hyperplane will be of the form $H=(a_{1,1},\ldots,a_{1,k_1},a_{2,1},\ldots,a_{2,k_2},\ldots,a_{t,1},\ldots,a_{t,k_t})^\perp$, for some nonzero vectors $a_i=(a_{i,1},\ldots,a_{i,k_i})\in \Fm^{k_i}$, for $i\in [t]$. Equivalently, we will prove that 
\begin{equation}\label{eq:card_HcapS_gen}
    |T|\le q^{k-1},
\end{equation}
where 
$$T=\left\{(x_1,\ldots,x_t)\in \F_{q^{n_1}}\times\ldots\times  \F_{q^{n_t}} \colon \sum_{r=1}^ta_r(x_r)=0\right\}.$$

We will show this using double induction  {on $t$ and $k_1$}. We first start our induction on $t$.

If $t=1$, then $|T|\le q^{k_1-1}$, since $\mS_1$ is a $(k_1-1)$-scattered $[n_1,k_1]_{q^m/q}$ system.

Now assume that the claim holds for every $t'<t$.
We proceed by induction on $k_1$. 
For $k_1=1$, up to dividing by $a_{1,1}$, we can assume that $H=(1\mid a_2 \mid \ldots \mid a_t)^\perp$.
Thus,
$$T=\left\{(x_1,\ldots,x_t) \in \F_{q^{n_1}}\times \ldots\times \F_{q^{n_t}} \colon x_1+\sum_{r=2}^ta_{r}(x_r)=0 \right\}.$$
Since $\gcd(n_i,n_t)=1$ for every $i \in [t-1]$, there exists $h\in \mathbb N$ such that $h\equiv 1 \pmod{n_t}$ and $h\equiv 0\pmod{n_i}$ for every $i \in [t-1]$. Therefore, if $(x_1,\ldots,x_t)\in T$, then
$$\sum_{r=2}^{t}a_{r}(x_r)=-x_1=-x_1^{q^{h}}=\sum_{r=2}^{t-1}a_{r}^{q^{h}}(x_r)+a_{t}^{q^{h}}(x_t^q).$$
Thus, 
\begin{equation}\label{eq:system1_gen}\begin{cases}
\displaystyle\sum\limits_{r=2}^{t}d_{r}(x_r)=0, \\
x_1+\displaystyle\sum\limits_{r=2}^ta_{r}(x_r)=0,
\end{cases}\end{equation}
where $d_{r}=a_{r}-a_{r}^{q^{h}}$ for each $r \in \{2,\ldots,t-1\}$,
$d_{t,i}=a_{t,i}-a_{{t,i-1}}^{q^{h}}$ for each $i \in [k_t+1]$ and let $a_{t,k_t+1}=a_{t,0}=0$. It is readily seen that $d_t=(d_{t,1},\ldots,d_{t,k_t+1})\neq 0$.
Let $\mathcal J:=\{i \in \{2,\ldots,t-1\} \,:\, d_i=0\}$. We have two cases.

\noindent \underline{\textbf{Case I:}} $\mathcal J=\emptyset$. Then, all the vectors $d_i$ are nonzero for each $i \in \{2,\ldots,t\}$, and 
 {$$T\subseteq T':=\left\{\left(-\sum_{r=2}^ta_r(x_r),x_2,\ldots,x_t\right)\in \F_{q^{n_1}}\times\ldots\times  \F_{q^{n_t}} \colon \sum_{r=2}^td_r(x_r)=0\right\}.$$}
{Since $T'$ has the same cardinality of the set
$$ \left\{(x_2,\ldots,x_t)\in \F_{q^{n_2}}\times\ldots\times  \F_{q^{n_t}} \colon \sum_{r=2}^td_r(x_r)=0\right\},$$ which only has $t-1$ components, we can use  induction hypothesis on $t'=t-1$, and derive that $|T'|\le q^{k_2+\ldots+(k_t+1)-1}=q^{k_1+\ldots+k_t-1}$.}

\noindent \underline{\textbf{Case II:}} $\mathcal J\neq \emptyset$. Since $d_t \neq 0$, up to reordering the variables, we may assume $\mathcal J=\{2,\ldots,s\}$, for some $s<t$. Then, \eqref{eq:system1_gen} reads as 
\begin{equation}\label{eq:system1_gen_case2}\begin{cases}
\displaystyle\sum\limits_{r=s+1}^{t}d_{r}(x_r)=0, \\
x_1+\displaystyle\sum\limits_{r=2}^ta_{r}(x_r)=0,
\end{cases}\end{equation}
By inductive hypothesis, the first equation in \eqref{eq:system1_gen_case2} has at most $q^{k_{s+1}+\ldots+(k_t+1)-1}$ solutions in $\F_{q^{n_{s+1}}}\times \ldots \times  \F_{q^{n_{t}}}$. For each of these solutions $(\bar{x}_{s+1},\ldots,\bar{x}_t)$, the second equation becomes
\begin{equation}\label{eq:new_subst}
    x_1+\sum_{r=2}^sa_r(x_s)=-\sum_{r=s+1}^ta_r(\bar{x}_r).
\end{equation}
Note that \eqref{eq:new_subst} is an affine equation, so its number of solution in $\F_{q^{n_1}}\times \ldots\times\F_{q^{n_s}}$
is either $0$ or equal to the cardinality of
$$T':=\left\{(x_1,\ldots,x_s) \in \F_{q^{n_1}}\times \ldots\times \F_{q^{n_s}} \colon x_1+\sum_{r=2}^sa_{r}(x_r)=0 \right\}.$$
By inductive hypothesis, 
$$|T'|\le q^{1+k_2+\ldots+k_s-1},$$
hence the total number of solutions of \eqref{eq:system1_gen_case2} is at most 
$$q^{k_{s+1}+\ldots+(k_t+1)-1}q^{1+k_2+\ldots+k_s-1}=q^{k_1+k_2+\ldots+k_t-1}.$$
 {This concludes the base case $k_1 = 1$.}
Let us assume that \eqref{eq:card_HcapS_gen} holds for  every $k_{1}'<k_1$ and every $k_{2}',\ldots,k_t'\in \mathbb N$.
Observe that, if $a_{1,k_1}=0$, then  {$a_1(x_1)=b_1(x_1)$}, where 
$b_1=(a_{1,1},\ldots,a_{1,k_1-1})$, and
$$T=\left\{(x_1,\ldots,x_t)\in \F_{q^{n_1}}\times\ldots\times  \F_{q^{n_t}} \colon b_1(x_1)+\sum_{r=2}^ta_r(x_r)=0 \right\}.$$
By inductive hypothesis with $k_1'=k_1-1$, we have $|T|\le q^{k_1'+k_2+\ldots+k_t-1}<q^{k_1+k_2+\ldots+k_t-1}$. Therefore, we may assume $a_{1,k_1}\neq 0$, and, up to rescaling, we may take $a_{1,k_1}=1$. Again, define $b_1=(a_{1,1},\ldots,a_{1,k_1-1})$, and observe in this case that 
$$a_1(x_1)=x_1^{q^{k_1-1}}+b_1(x_1).$$
Since $\gcd(n_i,n_j)=1$ for every $i\neq j$, there exists $h\in \mathbb N$ such that $h\equiv 1 \pmod{n_t}$ and $h\equiv 0 \pmod{n_i}$  for each $i \in [t-1]$. Therefore, if $(x_1,\ldots,x_t)\in T$, then
 {$$\begin{cases} -x_1^{q^{k_1-1}}-b_1(x_1)=\displaystyle\sum\limits_{r=2}^ta_r(x_r),\\
-x_1^{q^{k_1-1}}-b_1^{q^h}(x_1)=\displaystyle\sum\limits_{r=2}^{t-1}a_{r}^{q^{h}}(x_r)+a_{t}^{q^{h}}(x_t^q),
\end{cases}$$}
where the second identity follows from raising the first identity to the ${q^{h}}$-th power. By subtracting the two identities, we get the equivalent system
\begin{equation}\label{eq:system2_gen}\begin{cases} \displaystyle\sum\limits_{r=1}^ta_r(x_r)=0,\\
\displaystyle\sum\limits_{r=1}^{t}d_{r}(x_r)=0,
\end{cases}\end{equation}
where $d_{r}=a_{r}-a_{{r}}^{q^{h}}$ for each $r \in \{2,\ldots,t-1\}$, $d_1=b_1-b_1^{q^h}$, and $d_t$ is given by
$d_{t,i}=a_{t,i}-a_{{t,i-1}}^{q^{h}}$ for each $i\in [k_t+1]$ with $a_{t,k_t+1}=a_{t,0}=0$. As in the base step of the induction, one can verify that $d_t=(d_{t,1},\ldots,d_{t,k_t+1})\neq 0$. We now proceed as in the base case of the induction on $k_1$. Let $\mathcal J:=\{i \in [t-1]\,:\, d_i=0\}$. Again, we divide the proof in cases:

\noindent \underline{\textbf{Case I:}} $\mathcal J=\emptyset$. Then, all the vectors $d_i$ are nonzero for each $i \in [t]$, and 
$$T\subseteq T':=\left\{\left(x_1,x_2,\ldots,x_t\right)\in \F_{q^{n_1}}\times\ldots\times  \F_{q^{n_t}} \colon \sum_{r=1}^td_r(x_r)=0\right\}.$$
Note that $d_1\in \Fm^{k_1-1}$, while $d_i \in \Fm^{k_i}$ for each $i \in \{2,\ldots,t-1\}$ and $d_t \in \Fm^{k_t+1}$. Hence, by induction hypothesis on $k_1'=k_1-1$, we have $|T'|\le q^{k_1'+\ldots+(k_t+1)-1}=q^{k_1+\ldots+k_t-1}$.

\noindent \underline{\textbf{Case II:}} $\mathcal J\neq \emptyset$. Up to reordering the variables, we may assume $\mathcal J=[s]$, for some $s<t$ (since $d_t \neq 0$). Then, \eqref{eq:system2_gen} reads as 
\begin{equation}\label{eq:system1_more_gen_case2}\begin{cases}
\displaystyle\sum\limits_{r=s+1}^{t}d_{r}(x_r)=0, \\
\displaystyle\sum\limits_{r=1}^td_{r}(x_r)=0,
\end{cases}\end{equation}
By inductive hypothesis, the first equation in \eqref{eq:system1_more_gen_case2} has at most $q^{k_{s+1}+\ldots+(k_t+1)-1}$ solutions in $\F_{q^{n_{s+1}}}\times \ldots \times  \F_{q^{n_{t}}}$. For each of these solutions $(\bar{x}_{s+1},\ldots,\bar{x}_t)$, the second equation becomes
\begin{equation}\label{eq:new_more_subst}
    \sum_{r=1}^sd_r(x_r)=-\sum_{r=s+1}^td_r(\bar{x}_r).
\end{equation}
Note that \eqref{eq:new_more_subst} is an affine equation, so its number of solution in $\F_{q^{n_1}}\times \ldots\times\F_{q^{n_s}}$
is either $0$ or equal to the cardinality of
$$T':=\left\{(x_1,\ldots,x_s) \in \F_{q^{n_1}}\times \ldots\times \F_{q^{n_s}} \colon \sum_{r=1}^sd_{r}(x_r)=0 \right\}.$$
By inductive hypothesis on $s<t$, 
 {$$|T'|\le q^{k_1+\ldots+k_s-1},$$}
hence the total number of solutions of \eqref{eq:system1_more_gen_case2} is at most 
$$q^{k_{s+1}+\ldots+(k_t+1)-1}q^{k_1+k_2+\ldots+k_s-1}=q^{k_1+k_2+\ldots+k_t-1},$$
{ concluding  the proof.}
\end{proof}

\begin{example}
    Let $t=3$ and $n_1=5$, $n_2=3$ and $n_3=2$, and consider the direct sum
    $$\mM=\mU_{2,5}(q)\oplus\mU_{2,3}(q)\oplus \mU_{1,2}(q).$$
    for any prime power $q$. Theorem \ref{thm:direct_sum_constr_general}  states that $\mM$ is representable over $\Fm$ with  $m=n_1n_2n_3=30$, and a representation is given by $(\mS,\rho_{\mS})$ where $\mS$ is the $[10,5]_{q^{30}/q}$ system 
    $$\mS=\left\{(a,a^q,b,b^q,c) \,:\, a \in \F_{q^5}, b \in \F_{q^3}, c \in \F_{q^2} \right\}.$$
\end{example}

Thanks to Theorem \ref{thm:direct_sum_constr_general}, we can  give a positive answer to the question on the representability of the direct sum of $t$ uniform $q$-matroids.

\begin{theorem}\label{thm:representability_direct_sum} 
    Let $k_1,\ldots, k_t, n_1, \ldots n_t$ be positive integers such that $k_i< n_i$ for each $i \in [t]$. Then, the direct sum
    $$\mU_{k_1,n_1}(q) \oplus \ldots \oplus \mU_{k_t,n_t}(q)$$
    is representable. In particular, it is representable over every field $\Fm$ such that $m=m_1\cdot\ldots\cdot m_t$ for any $m_1,\ldots,m_t$ satisfying $\gcd(m_i,m_j)=1$ for each $i \neq j$ and $m_i\geq n_i$ for $i\in [t]$.
\end{theorem}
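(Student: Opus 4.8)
The plan is to derive Theorem \ref{thm:representability_direct_sum} as a direct consequence of the machinery already assembled, rather than proving anything new from scratch. The key observation is that representability of the direct sum $\mU_{k_1,n_1}(q)\oplus\cdots\oplus\mU_{k_t,n_t}(q)$ over $\Fm$ is, by the equivalence $(1)\Longleftrightarrow(3)$ in Theorem \ref{thm:charact_independence}, completely equivalent to the existence of a $q$-system $\mS=\bigoplus_{i=1}^t\mS_i$ that is $(\Lambda_{k-1,\mathbf{k}},k-1)$-evasive, where each $\mS_i$ is an $\Fm$-representation of $\mU_{k_i,n_i}(q)$. Theorem \ref{thm:direct_sum_constr_general} produces exactly such a system in the special case $m=n_1\cdots n_t$ with the $n_i$ pairwise coprime. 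So the whole content of the theorem is to bootstrap from this special case to the full generality claimed, namely $m=m_1\cdots m_t$ with $m_i\geq n_i$ and $\gcd(m_i,m_j)=1$ for $i\neq j$.

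First I would record that the $n_i$-summand $\mU_{k_i,n_i}(q)$ is $\Fm$-representable whenever $m_i\mid m$ and $m_i\geq n_i$: indeed, by the Remark on representability of uniform $q$-matroids, $\mU_{k_i,n_i}(q)$ is $\F_{q^{m_i}}$-representable precisely when $m_i\geq n_i$, and any $\Fm[m_i]$-representation is also an $\Fm$-representation since $\F_{q^{m_i}}\subseteq\Fm$ (the generator matrix has entries in a subfield). This legitimizes working with the individual summands over the big field $\Fm$. The substantive step is then to replace each $n_i$ by $m_i$ in the construction: define $\mS_i:=\{(x,x^q,\ldots,x^{q^{k_i-1}}):x\in\F_{q^{m_i}}\}$, an $[m_i,k_i]_{q^m/q}$ system. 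Since $k_i<n_i\leq m_i$, each $\mS_i$ is $(k_i-1)$-scattered, hence a valid $\Fm$-representation of the uniform $q$-matroid $\mU_{k_i,m_i}(q)$; but what we actually need is a representation of $\mU_{k_i,n_i}(q)$, and a $(k_i-1)$-scattered system of $\F_q$-dimension $n_i$ is obtained by restricting to any $n_i$-dimensional $\F_q$-subspace of $\mS_i$ on which scatteredness is preserved.

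The cleanest route, which I expect to be what the authors intend, is to invoke Theorem \ref{thm:direct_sum_constr_general} verbatim but with the parameters $(m_1,\ldots,m_t)$ playing the role of $(n_1,\ldots,n_t)$: the hypotheses $\gcd(m_i,m_j)=1$ and $k_i<m_i$ (which follows from $k_i<n_i\leq m_i$) are exactly satisfied, so the theorem yields a $(\Lambda_{k-1,\mathbf{k}},k-1)$-evasive system over $\Fm$ with $m=m_1\cdots m_t$, representing $\mU_{k_1,m_1}(q)\oplus\cdots\oplus\mU_{k_t,m_t}(q)$. To pass from heights $m_i$ down to the desired heights $n_i$, I would restrict each $\mS_i$ to a suitable $n_i$-dimensional $\F_q$-subspace: the evasiveness property, being an upper bound on weights $\wt_{\mS}(V)=\dim_{\F_q}(\mS\cap V)$, is inherited by any $\F_q$-subsystem $\mS'\subseteq\mS$, since $\mS'\cap V\subseteq\mS\cap V$. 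Thus the restricted direct sum $\mS'=\bigoplus\mS_i'$ with $\dim_{\F_q}\mS_i'=n_i$ remains $(\Lambda_{k-1,\mathbf{k}},k-1)$-evasive, and one only needs to check that each $\mS_i'$ still $\F_{q^m}$-spans $\iota_i(\F_{q^m}^{k_i})$ and is $(k_i-1)$-scattered, which holds as long as $\rho(\mS_i')=k_i$ and no proper weight jumps occur — guaranteed by choosing $\mS_i'$ scattered.

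The main obstacle is precisely this last inheritance argument: one must verify that evasiveness with respect to the family $\Lambda_{k-1,\mathbf{k}}$ (hyperplanes avoiding every $\iota_i(\F_{q^m}^{k_i})$) is genuinely preserved under restriction to $\mS_i'$, \emph{and} that the restricted system still represents the correct summand $\mU_{k_i,n_i}(q)$ rather than some degenerate object. Monotonicity of weight under $\mS'\subseteq\mS$ handles the evasiveness bound immediately, but one has to ensure the $\F_{q^m}$-span condition $\langle\mS_i'\rangle_{\F_{q^m}}=\iota_i(\F_{q^m}^{k_i})$ persists so that $\Lambda_{k-1,\mathbf{k}}$ remains the correct index family; this follows by picking $\mS_i'$ to be a scattered $n_i$-dimensional subspace of the $(k_i-1)$-scattered system $\mS_i$, whose existence is exactly the statement that $\mU_{k_i,n_i}(q)$ is $\F_{q^{m_i}}$-representable (hence $\Fm$-representable) for $m_i\geq n_i$. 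Once these points are settled, Theorem \ref{thm:charact_independence}$(3)\Longrightarrow(1)$ gives that $(\mS',\rho_{\mS'})$ represents $\mU_{k_1,n_1}(q)\oplus\cdots\oplus\mU_{k_t,n_t}(q)$ over $\Fm$, completing the proof.
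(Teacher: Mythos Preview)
Your proposal is correct and follows essentially the same route as the paper: apply Theorem \ref{thm:direct_sum_constr_general} with the parameters $(m_1,\ldots,m_t)$ in the role of $(n_1,\ldots,n_t)$ to obtain a $(\Lambda_{k-1,\mathbf{k}},k-1)$-evasive system $\mT=\bigoplus_i\mT_i$ over $\Fm$ with $m=m_1\cdots m_t$, then restrict each $\mT_i$ to an $n_i$-dimensional $\Fq$-subspace and invoke Theorem \ref{thm:charact_independence}. The paper does exactly this, choosing $\mS_i=\{(x,x^q,\ldots,x^{q^{k_i-1}}):x\in V_i\}$ for an arbitrary $n_i$-dimensional $\Fq$-subspace $V_i\subseteq\F_{q^{m_i}}$.

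One remark: the ``main obstacle'' you flag is milder than you suggest. Since $(k_i-1)$-scatteredness is itself an evasiveness condition, \emph{every} $\Fq$-subspace of the $(k_i-1)$-scattered system $\mT_i$ is automatically $(k_i-1)$-scattered; no special choice of $\mS_i'$ is needed, and the spanning condition $\langle\mS_i'\rangle_{\Fm}=\iota_i(\Fm^{k_i})$ follows because a $(k_i-1)$-scattered subspace of $\Fq$-dimension $n_i>k_i-1$ cannot lie in any $\Fm$-hyperplane. The paper accordingly takes any $V_i$ without further comment.
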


\begin{proof}
    Let $m_1,\ldots,m_t$ be positive integers as in the statement, that is $m_i\ge n_i$ for each $i \in [t]$ and $\gcd(m_i,m_j)=1$ for each $i \neq j$, and let $m=m_1\cdot\ldots\cdot m_t$. Denote by $\mathbf{k}=(k_1,\ldots,k_t)$ and by $k=k_1+\ldots+k_t$. Using Theorem \ref{thm:direct_sum_constr_general}, we can construct 
  $$        \mT_i = \{(x,x^q,\ldots,x^{q^{k_i-1}}) \colon x \in \F_{q^{m_i}}\}, \qquad \mbox{ for each } i \in [t],$$ 
    such that $\mT:=\mT_1\oplus\ldots \oplus\mT_t$ is an $[m_1+\ldots+m_t,k]_{q^m/q}$ system which is $(\Lambda_{k-1,\mathbf{k}},k-1)$-evasive. Then, by choosing any $\Fq$-subspace $V_i \subseteq \F_{q^{m_i}}$, with $\dim_{\Fq}(V_i)=n_i$ for $i \in [t]$, and defining 
      $$
        \mS_i = \{(x,x^q,\ldots,x^{q^{k_1-1}}) \colon x \in V_i\}, \qquad \mbox{ for each }i \in [t],$$
    we obtain that $\mS:=\mS_1\oplus\ldots \oplus\mS_t$ is an $\Fmk$ system which is a $(\Lambda_{k-1,\mathbf{k}},k-1)$-evasive, with $n=n_1+\ldots+n_t$. Using Theorem \ref{thm:charact_independence} we conclude.
\end{proof}

\begin{example}
    Let $t=3$ and $n_1=n_2=n_3=4$, and consider the direct sum
    $$\mM=\mU_{2,4}(q)\oplus\mU_{2,4}(q)\oplus \mU_{2,4}(q).$$
    for any prime power $q$. We consider three extension degrees $m_1,m_2,m_3$ such that $m_i\ge n_i$ for $i \in [3]$ and $\gcd(m_i,m_j)=1$ for each $1\le i<j\le 3$. A possible choice is $m_1=4$, $m_2=5$, $m_3=7$. Theorem \ref{thm:representability_direct_sum} states that 
    $\mM$ is representable over $\Fm$ with  $m=m_1m_2m_3=140$. In particular, following its proof, we can also construct the $[12,6]_{q^{140}/q}$ system $\mS$ such that $\mM\cong (\mS,\rho_{\mS})$. It is enough to fix three $4$-dimensional $\Fq$-subspaces $V_1= \F_{q^4}$, $V_2\subseteq \F_{q^5}$, $V_3\subseteq \F_{q^7}$, and take
    $$\mS:=\left\{ (a_1,a_1^q,a_2,a_2^q,a_3,a_3^q) \,:\, a_i \in V_i, i \in [3] \right\}.$$
\end{example}

    Observe that although Theorem \ref{thm:representability_direct_sum} answers affirmatively to the question about the representability of the direct sum of any $t$ uniform $q$-matroids, it does not characterize the integers $m$ for which this direct sum is $\Fm$-representable. In Section \ref{sec:point_point}, we will provide a  deeper analysis of those values in the special case of the direct sum of two uniform $q$-matroids of rank~$1$.

\section{The particular case of uniform \emph{q}-matroids of rank 1}\label{sec:point_point}

We have seen in Theorem \ref{thm:representability_direct_sum} that the direct sum of uniform $q$-matroids is always representable. However, such a result does not characterize over which extension field $\F_{q^m}$ we can have a representation. In general, this is a difficult question. In this section, we will focus on the direct sum of $t$ many uniform $q$-matroids of rank $1$.  We will first give some necessary conditions on $m$ and then we will give some sufficient conditions on $m$ ensuring that the direct sum of \emph{two} uniform $q$-matroids of rank $1$ is representable  {over $\mathbb{F}_{q^m}$}.

\subsection{Necessary conditions for the representability}

In this subsection, we will  study the extension fields over which the direct sum of $t$ many uniform $q$-matroids of rank $1$ is representable. As we have seen before, this corresponds to the study of $(\Lambda_{t-1,(1,\ldots,1)},t-1)$-evasive subspaces. By Lemma \ref{lem:boxpropgen}, these subspaces belong to the family of $(\Lambda_{1,(1,\ldots,1)},1)$-evasive subspaces. This class of subspaces have been investigated in \cite{napolitano2022linear} and in \cite{zullo2023multi} in terms of linear sets with complementary weights. A first result gives us some necessary conditions for the existence of $(\Lambda_{t-1,(1,\ldots,1)},t-1)$-evasive subspaces; this can be derived by {\cite[Theorem 4.4]{napolitano2022linear}} and \cite[Theorem 6.4]{zullo2023multi}.

\begin{theorem}\label{thm:necessary_two_fat_points}
    For $i\in [t]$, let $n_i\geq 2$, and let $\mS_i$ be an $[n_i,1]_{q^m/q}$ system. Assume that 
    $\mS_1\oplus\ldots\oplus\mS_t$ is $(\Lambda_{1,(1,\ldots,1)},1)$-evasive. Then, $n_i\le \frac{m}{2}$ for each $i \in [t]$.
\end{theorem}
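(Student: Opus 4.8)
The plan is to understand what the $(\Lambda_{1,(1,\ldots,1)},1)$-evasiveness hypothesis says concretely and then extract a dimension constraint on each summand. Since each $\mS_i$ is an $[n_i,1]_{q^m/q}$ system, it is an $n_i$-dimensional $\F_q$-subspace of a one-dimensional $\F_{q^m}$-space $\iota_i(\F_{q^m})$; equivalently, after identifying this line with $\F_{q^m}$, each $\mS_i$ corresponds to an $n_i$-dimensional $\F_q$-subspace $U_i\subseteq\F_{q^m}$ spanning $\F_{q^m}$ over $\F_{q^m}$. The set $\Lambda_{1,(1,\ldots,1)}$ consists of one-dimensional $\F_{q^m}$-subspaces (points) of $\F_{q^m}^t$ that do not contain any of the coordinate lines $\iota_i(\F_{q^m})$, i.e.\ points distinct from the $t$ fundamental points. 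The evasiveness condition $\wt_{\mS}(V)\le 1$ says that each such point meets $\mS=\mS_1\oplus\cdots\oplus\mS_t$ in an $\F_q$-subspace of dimension at most $1$.

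First I would translate the weight condition into a statement about the subspaces $U_i$. A generic point of $\Lambda_{1,(1,\ldots,1)}$ is spanned by a vector $v=(\lambda_1,\ldots,\lambda_t)$ with at least two nonzero coordinates. An element of $\mS$ lying on this point has the form $\mu v=(\mu\lambda_1,\ldots,\mu\lambda_t)$ with $\mu\in\F_{q^m}$, and it lies in $\mS$ precisely when $\mu\lambda_i\in U_i$ for every $i$. So the $\F_q$-dimension of $\mS\cap\langle v\rangle_{\F_{q^m}}$ equals the $\F_q$-dimension of $\{\mu\in\F_{q^m}:\mu\lambda_i\in U_i\text{ for all }i\}=\bigcap_i \lambda_i^{-1}U_i$ (intersecting only over the indices with $\lambda_i\neq0$). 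The evasiveness condition thus becomes: for every choice of scalars $\lambda_i$ with at least two nonzero, the $\F_q$-subspace $\bigcap_{i:\lambda_i\ne0}\lambda_i^{-1}U_i$ has dimension at most $1$. Specializing to just two indices $i\neq j$ and absorbing scalars, this gives $\dim_{\F_q}(U_i\cap\alpha U_j)\le 1$ for all $\alpha\in\F_{q^m}^\times$.

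The heart of the argument is then a counting/dimension bound on a single pair. Fixing $i$ and $j$, I would count pairs $(u,w)\in(U_i\setminus\{0\})\times(U_j\setminus\{0\})$ via the ratios $u/w\in\F_{q^m}^\times$: two such pairs give the same ratio $\alpha=u/w$ exactly when $u\in U_i\cap\alpha U_j$, and for each fixed $\alpha$ the condition $\dim_{\F_q}(U_i\cap\alpha U_j)\le 1$ forces at most $q-1$ pairs sharing that ratio. Counting all pairs gives $(q^{n_i}-1)(q^{n_j}-1)=\sum_\alpha(\text{pairs with ratio }\alpha)\le (q^m-1)(q-1)$, whence $q^{n_i}+q^{n_j}\le q^{m}+q^{m-n_i-n_j+1}+\cdots$; rearranged this yields $q^{n_i+n_j}\lesssim q^{m+1}$, i.e.\ $n_i+n_j\le m+1$. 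To sharpen this to $n_i\le m/2$ one applies the bound symmetrically and uses that $U_i,U_j$ are arbitrary among the summands, or invokes the already-cited results \cite[Theorem 4.4]{napolitano2022linear} and \cite[Theorem 6.4]{zullo2023multi} directly. Indeed the cleanest route is simply to reduce the statement to those two theorems: the condition $\dim_{\F_q}(U_i\cap\alpha U_j)\le1$ for all $\alpha$ is exactly the defining property of two scattered linear sets with complementary weights studied there, and their main bound gives $n_i\le m/2$.

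The main obstacle I anticipate is making the reduction to \cite{napolitano2022linear,zullo2023multi} fully rigorous rather than merely citing it: one must verify that the $(\Lambda_{1,(1,\ldots,1)},1)$-evasiveness of the \emph{direct sum} $\mS_1\oplus\cdots\oplus\mS_t$, which a priori constrains all the summands jointly through points with many nonzero coordinates, actually collapses to the pairwise condition $\dim_{\F_q}(U_i\cap\alpha U_j)\le1$ on each pair of coordinate subspaces. The inclusion giving the pairwise condition from the global one is the step to handle carefully: one restricts attention to points $v$ supported on exactly two coordinates $i,j$, checks that such $v$ indeed lie in $\Lambda_{1,(1,\ldots,1)}$ (they avoid all fundamental points since they have two nonzero entries), and reads off the pairwise intersection bound. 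Once that translation is in place, the field-size bound $n_i\le m/2$ is a direct application of the cited complementary-weight results, and no further delicate estimate is needed.
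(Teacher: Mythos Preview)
Your approach is correct and matches the paper's: the paper offers no argument beyond stating that the result ``can be derived by \cite[Theorem~4.4]{napolitano2022linear} and \cite[Theorem~6.4]{zullo2023multi}'', and your explicit reduction to the pairwise condition $\dim_{\F_q}(U_i\cap\alpha U_j)\le 1$ (by restricting to points of $\Lambda_{1,(1,\ldots,1)}$ supported on exactly two coordinates) is precisely how one feeds into those references. One caution: your counting digression yields at best $n_i+n_j\le m+1$ and cannot be sharpened to $n_i\le m/2$ by ``applying the bound symmetrically'' (for instance it does not exclude $n_1=4$, $n_2=2$, $m=6$, $q=2$, where $(q^{4}-1)(q^{2}-1)=45\le 63=(q^{6}-1)(q-1)$), so that suggestion should be dropped---the cited theorems are doing the real work, as you yourself conclude.
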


Therefore, we obtain the following necessary conditions for the representability of the direct sum of uniform matroids of rank $1$.

\begin{corollary}\label{cor:necessary_two_rank1}
    Let $n_1,\ldots,n_t\geq 2$ be positive integers. If the direct sum
    $$\mU_{1,n_1}(q)\oplus \ldots \oplus \mU_{1,n_t}(q)$$
    is $\Fm$-representable, then $m\geq 2\max\{n_1,\ldots,n_t\}$.
\end{corollary}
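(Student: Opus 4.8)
The plan is to derive Corollary \ref{cor:necessary_two_rank1} directly from the preceding Theorem \ref{thm:necessary_two_fat_points} together with the evasiveness characterization of representability established in Section \ref{sec:construction}. First I would recall that, by Theorem \ref{thm:charact_independence}, the direct sum $\mU_{1,n_1}(q)\oplus\ldots\oplus\mU_{1,n_t}(q)$ is $\Fm$-representable precisely when there exist $[n_i,1]_{q^m/q}$ systems $\mS_i$ (each representing $\mU_{1,n_i}(q)$, which forces $n_i\le m$) whose direct sum $\mS=\mS_1\oplus\ldots\oplus\mS_t$ is $(\Lambda_{k-1,\mathbf{k}},k-1)$-evasive. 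Here every $k_i=1$, so $\mathbf{k}=(1,\ldots,1)$ and $k=t$, meaning $\mS$ must be $(\Lambda_{t-1,(1,\ldots,1)},t-1)$-evasive.

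The key step is the reduction from the $(t-1)$-level evasiveness to the $1$-level evasiveness. By Lemma \ref{lem:boxpropgen}, applied with $k=t$ and $l=1$, any $(\Lambda_{t-1,(1,\ldots,1)},t-1)$-evasive system is automatically $(\Lambda_{1,(1,\ldots,1)},\,(t-1)-t+1+1)$-evasive, and since $(t-1)-t+1+1=1$, this says exactly that $\mS$ is $(\Lambda_{1,(1,\ldots,1)},1)$-evasive. This is the hypothesis required to invoke Theorem \ref{thm:necessary_two_fat_points}, which then yields $n_i\le m/2$ for each $i\in[t]$, i.e. $m\ge 2n_i$ for every $i$. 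Taking the maximum over $i$ gives $m\ge 2\max\{n_1,\ldots,n_t\}$, which is the claimed bound.

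I anticipate the only subtle point is bookkeeping in the application of Lemma \ref{lem:boxpropgen}: one must be careful that the lemma is stated for general $\mathbf{k}$ with $1\le k_i<n_i\le m$, and here all $k_i=1<n_i$ (using the standing assumption $n_i\ge 2$ from the corollary's hypothesis), so the hypotheses are met and the index arithmetic $r-k+1+l=(t-1)-t+1+1=1$ checks out. Accordingly, the proof is short: it is essentially the chain ``representable $\Rightarrow$ $(\Lambda_{t-1,(1,\ldots,1)},t-1)$-evasive (Theorem \ref{thm:charact_independence}) $\Rightarrow$ $(\Lambda_{1,(1,\ldots,1)},1)$-evasive (Lemma \ref{lem:boxpropgen}) $\Rightarrow$ $n_i\le m/2$ for all $i$ (Theorem \ref{thm:necessary_two_fat_points}).'' I would write it out as a two- or three-sentence deduction invoking these three results in order, with no substantive computation beyond the elementary index check above.
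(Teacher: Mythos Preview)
Your proposal is correct and follows essentially the same route as the paper's proof: representability implies $(\Lambda_{t-1,(1,\ldots,1)},t-1)$-evasiveness via Theorem~\ref{thm:charact_independence}, then Lemma~\ref{lem:boxpropgen} downgrades this to $(\Lambda_{1,(1,\ldots,1)},1)$-evasiveness, and Theorem~\ref{thm:necessary_two_fat_points} gives the bound. The only minor slip is that Theorem~\ref{thm:charact_independence} lives in Section~\ref{sec:direct_sum_uniform}, not Section~\ref{sec:construction}.
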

\begin{proof}
    Let $\mathcal{S}_i$ be an $\Fm$-representation for $\mathcal{U}_{1,n_i}(q)$ for any $i \in [t]$.
    By Theorem \ref{thm:charact_independence},
    $\mathcal{S}=\oplus_{i=1}^t \mathcal{S}_i$ is an $\Fm$-representation of $$\mU_{1,n_1}(q)\oplus \ldots \oplus \mU_{1,n_t}(q)$$
    if and only if it is $(\Lambda_{t-1,(1,\ldots,1)},t-1)$-evasive. By Lemma \ref{lem:boxpropgen}, $\mathcal{S}$ is also $(\Lambda_{1,(1,\ldots,1)},1)$-evasive. So, the assertion follows by Theorem \ref{thm:necessary_two_fat_points}.
\end{proof}

\begin{remark}
    The rank-metric codes associated with the direct sum of uniform $q$-matroids of rank $1$, when representable, are the direct sum of $1$-dimensional MRD codes. This latter class of codes have been studied in \cite{santonastaso2024completely}, where such codes have been called \emph{completely decomposable} rank-metric codes. 
\end{remark}

\subsection{Construction for two summands over small extension fields}

We now specialize to the study of the direct sum of two uniform $q$-matroids of rank $1$. Combining some new and old results on the existence of $(\Lambda_{1,(1,1)},1)$-evasive subspaces we are able to prove the following result.

\begin{theorem}\label{thm:summary}
    Let $n_1,n_2\geq 2$ be two positive integers. The direct sum
    $$\mU_{1,n_1}(q)\oplus \mU_{1,n_2}(q)$$
    is $\Fm$-representable if at least one of the following holds.
    \begin{enumerate}
       \item $m$ is even and $m\geq 2\max\{n_1,n_2\}$; 
       \item $m\geq n_1n_2$;
       \item $m=t_1t_2$ with $t_1\ge n_1$ and $n_2\le \frac{t_1(t_2-1)}{2}+1$;
       \item $m=t_1t_2$ with $t_1\geq n_1,n_2$ and $t_2\geq 2$;
       \item $q=p^h$, $m=p^r$, $n_1+n_2-1\le \frac{m}{2}$.
    \end{enumerate}
\end{theorem}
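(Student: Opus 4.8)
The plan is to prove Theorem~\ref{thm:summary} by establishing each of the five sufficient conditions separately, in each case producing an $\Fm$-representation of $\mU_{1,n_1}(q)\oplus\mU_{1,n_2}(q)$. By Theorem~\ref{thm:charact_independence}, this reduces entirely to exhibiting, for each case, a pair of $[n_i,1]_{q^m/q}$ systems $\mS_1,\mS_2$ whose direct sum $\mS=\mS_1\oplus\mS_2$ is $(\Lambda_{1,(1,1)},1)$-evasive (recall that for $t=2$ and $\mathbf{k}=(1,1)$, the condition $(\Lambda_{k-1,\mathbf{k}},k-1)$-evasive is exactly $(\Lambda_{1,(1,1)},1)$-evasive). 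Concretely, writing $\mS_i=\{\alpha_i x : x\in V_i\}$ for some $\Fq$-subspaces $V_i\subseteq\Fm$ of dimension $n_i$ and scalars $\alpha_i\in\Fm^\times$, the evasiveness condition becomes a statement about the weight of every $\Fm$-hyperplane not containing either summand: after the $\psi_G$-translation it amounts to requiring that for all $(a_1,a_2)\in\Fm^2$ with $a_1,a_2\neq 0$, the linear system $a_1\alpha_1 x_1 + a_2\alpha_2 x_2 = 0$ with $x_i\in V_i$ has at most $q$ solutions, i.e.\ the linear set picture of two points with complementary weight~$1$. The whole theorem is thus a dictionary translation into the language of linear sets with complementary weights, where the cited references \cite{napolitano2022linear,zullo2023multi} and the surrounding literature supply the constructions.

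The five cases split naturally by their source. First I would dispatch case~(2), $m\ge n_1n_2$, by appealing directly to the general construction of Theorem~\ref{thm:direct_sum_constr_general} with $k_1=k_2=1$: taking $m=m_1 m_2$ with $\gcd(m_1,m_2)=1$, $m_i\ge n_i$, yields a $(\Lambda_{1,(1,1)},1)$-evasive direct sum, and the smallest such product is bounded by $n_1 n_2$ (and in fact Theorem~\ref{thm:representability_direct_sum} already covers the coprime-product situation). Case~(4), $m=t_1 t_2$ with $t_1\ge n_1,n_2$ and $t_2\ge2$, and case~(3), $m=t_1 t_2$ with $t_1\ge n_1$ and $n_2\le \frac{t_1(t_2-1)}{2}+1$, I would obtain from the existence results for evasive subspaces in \cite{napolitano2022linear}: here one exploits the subfield tower $\F_{q^{t_1}}\subseteq\F_{q^m}$, placing one system inside a copy of $\F_{q^{t_1}}$ and choosing the second system to avoid all the ``bad'' hyperplanes, the dimensional inequalities being exactly the conditions under which such an avoiding subspace of the required dimension exists. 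Case~(1), $m$ even and $m\ge 2\max\{n_1,n_2\}$, should follow from the classical construction of scattered or complementary-weight linear sets over a quadratic subextension $\F_{q^{m/2}}\subseteq\Fm$; case~(5), $q=p^h$ and $m=p^r$ a prime power with $n_1+n_2-1\le\frac{m}{2}$, is the characteristic-$p$ construction of \cite{zullo2023multi}, where the additive structure of $\F_{p^r}$ as a vector space over its prime subfield gives enough room to build two complementary scattered pieces.

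The key recurring step in every case is the verification that the chosen $V_1,V_2$ actually force $|T|\le q$ for the solution set $T=\{(x_1,x_2)\in V_1\times V_2 : a_1\alpha_1 x_1+a_2\alpha_2 x_2=0\}$ over all admissible $(a_1,a_2)$; this is precisely the ``complementary weight~$1$'' condition and is where the arithmetic of the field extension (coprimality, quadratic subfield, or $p$-power structure) does the real work. The main obstacle I expect is case~(3), with its asymmetric and non-obvious bound $n_2\le\frac{t_1(t_2-1)}{2}+1$: unlike the clean coprimality or quadratic-subfield arguments, this inequality comes from a careful counting of how many $\Fm$-hyperplanes can meet a subspace of $\F_{q^{t_1}}$-type in too large an intersection, and pinning down the sharp constant requires the detailed machinery of \cite{napolitano2022linear} rather than a self-contained argument. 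I would therefore present cases~(1), (2), (4) with explicit constructions, and cite \cite{napolitano2022linear,zullo2023multi} for the precise existence statements underpinning cases~(3) and~(5), since those are exactly the complementary-weight results already developed there.
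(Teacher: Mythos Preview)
Your overall strategy is right and matches the paper: reduce via Theorem~\ref{thm:charact_independence} to constructing, for each case, a $(\Lambda_{1,(1,1)},1)$-evasive $[n_1+n_2,2]_{q^m/q}$ system. The paper does exactly this, citing Proposition~\ref{prop:m_ge_n1n2} for~(2), \cite[Theorem~4.5]{napolitano2022linear} for~(3), Corollary~\ref{cor:1} for~(1) and~(4), and Corollary~\ref{cor:modular} for~(5).

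There is, however, a genuine gap in your treatment of case~(2). You propose to obtain $m\ge n_1n_2$ from Theorem~\ref{thm:direct_sum_constr_general}/\ref{thm:representability_direct_sum}, claiming that ``the smallest such product is bounded by $n_1n_2$''. This is false: that theorem requires $m=m_1m_2$ with $\gcd(m_1,m_2)=1$ and $m_i\ge n_i$, and such a factorisation need not exist for a given $m\ge n_1n_2$. For instance, if $m$ is prime (say $m=11$, $n_1=n_2=3$) there is no nontrivial factorisation at all; if $n_1=n_2=3$ and $m=9$ the only factorisation with both factors $\ge3$ is $3\cdot3$, which is not coprime. The paper handles case~(2) by an entirely different construction, Proposition~\ref{prop:m_ge_n1n2}: take a primitive element $\gamma$ with $\Fm=\Fq(\gamma)$ and set $\mS_1=\{f(\gamma):f\in\Fq[x]_{<n_1}\}$, $\mS_2=\{g(\gamma^{n_1}):g\in\Fq[x]_{<n_2}\}$; the evasiveness is checked via Proposition~\ref{prop:Sidon_pairs} using that the relevant polynomial identity has degree $<n_1n_2\le m$. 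This argument makes no coprimality or factorisation assumption on $m$.

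For case~(5) your attribution is off: the construction is not in \cite{zullo2023multi} (that reference is used only for the necessary condition in Theorem~\ref{thm:necessary_two_fat_points}), but is developed in the paper as Corollary~\ref{cor:modular}, using the filtration $\mathcal F_i=\ker p_i$ of $\Fm$ by iterated Artin--Schreier kernels from \cite{neri2023proof} together with Lemma~\ref{lem:Vq=V} and Lemma~\ref{lem:VxV}. The remaining cases~(1),~(3),~(4) are essentially as you describe, with~(1) and~(4) both following from the subfield construction of Corollary~\ref{cor:1} (take $r=t_1$ and pass to subspaces of dimensions $n_1,n_2$), and~(3) from \cite[Theorem~4.5]{napolitano2022linear}.
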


The reader can verify that Theorem \ref{thm:summary}(4) is covered by the combination of Theorem \ref{thm:summary}(1) and Theorem \ref{thm:summary}(3). However, since the constructions are different and the latter ones derive from existing results in the literature, we decided to include it anyway in the statement.
\medskip 

The rest of this section is dedicated to show Theorem \ref{thm:summary}. 

\medskip 

Theorem \ref{thm:charact_independence} shows that establishing the $\F_{q^m}$-representability of the direct sum $\mU_{1,n_1}(q)\oplus \mU_{1,n_2}(q)$ is equivalent to finding an $[n_1,1]_{q^m/q}$ system $\mathcal{S}_1$ and an $[n_2,1]_{q^m/q}$ system $\mathcal{S}_2$ such that $\mathcal{S}_1\oplus\mathcal{S}_2$ is a $(\Lambda_{1,(1,1)},1)$-evasive $[n_1+n_2,2]_{q^m/q}$ system.
So, in the following we will exhibit  examples of such subspaces.
To this aim, we recall the following characterization of $(\Lambda_{1,(1,1)},1)$-evasiveness.

\begin{proposition}[{\cite[Theorem 4.1]{napolitano2022linear}}]\label{prop:Sidon_pairs}
Let $A,B\subseteq \Fm$ be two $\Fq$-subspaces with $\dim A=n_1$, $\dim B=n_2$. Then, $A\oplus B$ is $(\Lambda_{1,(1,1)},1)$-evasive
if and only if for each choice of $a_1,a_2\in A\setminus\{0\}$, $b_1,b_2\in B\setminus\{0\}$ satisfying $a_1b_2=a_2b_1$,  it must hold that $a_1=\lambda a_2$, $b_1=\lambda b_2$ for some $\lambda \in \Fq$. 
Equivalently, $A\oplus B$ is $(\Lambda_{1,(1,1)},1)$-evasive
if and only if for every $a_1,a_2\in A\setminus\{0\}$, $b_1,b_2\in B\setminus\{0\}$ for which there exists $\lambda \in \Fm$ such that $(a_1,a_2)=\lambda(b_1,b_2)$, we have that  $\lambda \in \F_q$.
\end{proposition}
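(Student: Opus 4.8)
The plan is to prove Proposition \ref{prop:Sidon_pairs} by directly unwinding the definition of $(\Lambda_{1,(1,1)},1)$-evasiveness and translating it into the multiplicative condition stated. First I would recall that $A \oplus B$ is a $[n_1+n_2,2]_{q^m/q}$ system sitting inside $\F_{q^m}^2$, where I identify $A$ with $\iota_1(A) = A \oplus \langle 0\rangle$ and $B$ with $\iota_2(B) = \langle 0 \rangle \oplus B$. The family $\Lambda_{1,(1,1)}$ consists of the $1$-dimensional $\F_{q^m}$-subspaces $V = \langle (\alpha,\beta)\rangle_{\F_{q^m}}$ of $\F_{q^m}^2$ that contain neither $\iota_1(\F_{q^m}) = \langle(1,0)\rangle$ nor $\iota_2(\F_{q^m}) = \langle(0,1)\rangle$; this is precisely the set of points with $\alpha \neq 0$ and $\beta \neq 0$. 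By Definition \ref{def:ahevasive}, evasiveness means $\wt_{A\oplus B}(V) = \dim_{\F_q}\big((A\oplus B)\cap V\big) \le 1$ for every such $V$.

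The core step is to reformulate the failure of evasiveness as the existence of two $\F_q$-independent intersection points. Concretely, $\wt_{A\oplus B}(V) \ge 2$ means there exist $\F_q$-linearly independent vectors $(a_1,b_1),(a_2,b_2) \in (A\oplus B)\cap V$ with $a_i \in A$, $b_i \in B$. Since $a_i\neq 0$ and $b_i \neq 0$ follow from $V$ avoiding the two coordinate axes (each nonzero vector of $V$ has both coordinates nonzero), both being in the line $V = \langle(\alpha,\beta)\rangle$ forces $(a_1,b_1) = \mu_1(\alpha,\beta)$ and $(a_2,b_2)=\mu_2(\alpha,\beta)$ for some $\mu_1,\mu_2 \in \F_{q^m}^\ast$. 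Eliminating $(\alpha,\beta)$ gives the proportionality $a_1 b_2 = a_2 b_1$ over $\F_{q^m}$, which is exactly the relation in the statement. So evasiveness fails precisely when there exist $a_1,a_2 \in A\setminus\{0\}$ and $b_1,b_2 \in B\setminus\{0\}$ with $a_1 b_2 = a_2 b_1$ and $(a_1,b_1),(a_2,b_2)$ $\F_q$-independent. Contrapositively, $A\oplus B$ is evasive if and only if every solution of $a_1 b_2 = a_2 b_1$ (with all entries nonzero) has $(a_1,b_1),(a_2,b_2)$ $\F_q$-dependent, i.e. $a_1 = \lambda a_2$ and $b_1 = \lambda b_2$ for a common $\lambda \in \F_q$.

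I would then verify the two directions of the equivalence carefully. For one direction, assuming the multiplicative condition, any $V \in \Lambda_{1,(1,1)}$ with two intersection points would yield a solution $a_1 b_2 = a_2 b_1$ forcing $\F_q$-dependence, contradicting that the two points span a $2$-dimensional $\F_q$-space inside the $1$-dimensional $\F_{q^m}$-line $V$; hence $\wt_{A\oplus B}(V)\le 1$. For the converse, evasiveness forces any coincidence $a_1 b_2 = a_2 b_1$ to come from $\F_q$-dependent points. The only subtlety is handling the case $a_1 = \lambda a_2$, $b_1 = \lambda b_2$ where one must check $\lambda$ lies in $\F_q$ and not merely in $\F_{q^m}$: from $\F_q$-dependence of $(a_1,b_1)$ and $(a_2,b_2)$ the scalar is automatically in $\F_q$, which is why the ``equivalently'' rephrasing at the end simply records that the proportionality scalar $\lambda$ with $(a_1,a_2)=\lambda(b_1,b_2)$ must descend to $\F_q$.

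The main obstacle, though it is more bookkeeping than genuine difficulty, is keeping straight which proportionality is over $\F_{q^m}$ and which is over $\F_q$, and correctly excluding the degenerate possibilities where some $a_i$ or $b_i$ vanishes. This is controlled by the defining property of $\Lambda_{1,(1,1)}$: excluding the coordinate axes guarantees that every relevant vector on the line $V$ has \emph{both} coordinates nonzero, so no degeneracy arises and the clean multiplicative criterion $a_1 b_2 = a_2 b_1$ captures exactly the geometric weight-two condition.
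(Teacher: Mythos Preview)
The paper does not actually prove this proposition: it is quoted from \cite{napolitano2022linear} and stated without proof. Your argument is correct and is exactly the natural one---unravel the definition of $\Lambda_{1,(1,1)}$ to the set of projective points $\langle(\alpha,\beta)\rangle_{\F_{q^m}}$ with $\alpha,\beta\neq 0$, observe that weight $\ge 2$ at such a point means two $\F_q$-independent vectors $(a_1,b_1),(a_2,b_2)\in A\oplus B$ on the same $\F_{q^m}$-line, and translate this into the cross-ratio relation $a_1b_2=a_2b_1$. You also correctly handle the ``equivalently'' clause: as written in the paper it reads $(a_1,a_2)=\lambda(b_1,b_2)$, which is a typographical slip (as one sees from how the proposition is invoked in the proofs of Proposition~\ref{prop:m_ge_n1n2} and Lemma~\ref{lem:Vq=V}); the intended statement, which you implicitly use, is $(a_1,b_1)=\lambda(a_2,b_2)$, and then the second formulation is just the tautological rewriting ``$\F_{q^m}$-proportional $\Rightarrow$ $\F_q$-proportional'' of the first.
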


Using Proposition  \ref{prop:Sidon_pairs} we can show a first example of $(\Lambda_{1,(1,1)},1)$-evasive subspace.

\begin{proposition}\label{prop:m_ge_n1n2}
    Let $n_1,n_2\geq 2$ be two positive integers. Assume that $m\geq n_1n_2$, and let $\gamma \in \Fm$ be such that $\Fm=\Fq(\gamma)$. 
Define
\begin{align*}
    \mS_1&=\{f(\gamma) \colon f \in \Fq[x]_{<n_1} \}, \\
    \mS_2 &=\{g(\gamma^{n_1}) \colon g \in \Fq[x]_{<n_2}\}.
\end{align*}
Then, $\mS_1\oplus\mS_2$ is a $(\Lambda_{1,(1,1)},1)$-evasive $[n_1+n_2,2]_{q^m/q}$ system.
\end{proposition}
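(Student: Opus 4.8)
The strategy is to invoke the first characterization in Proposition \ref{prop:Sidon_pairs}: I must show that whenever $a_1,a_2 \in \mS_1\setminus\{0\}$ and $b_1,b_2 \in \mS_2\setminus\{0\}$ satisfy $a_1 b_2 = a_2 b_1$, then necessarily $a_1 = \lambda a_2$ and $b_1 = \lambda b_2$ for some $\lambda \in \Fq$. The key observation driving everything is that elements of $\mS_1$ are values $f(\gamma)$ of $\Fq$-polynomials of degree $<n_1$, while elements of $\mS_2$ are values $g(\gamma^{n_1})$ of $\Fq$-polynomials of degree $<n_2$; hence, writing $b_j = g_j(\gamma^{n_1})$, each $b_j$ is a value at $\gamma$ of a polynomial of degree $< n_1 n_2$ supported only on exponents that are multiples of $n_1$. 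The products $a_1 b_2$ and $a_2 b_1$ are then values at $\gamma$ of polynomials of degree at most $(n_1-1) + n_1(n_2-1) = n_1 n_2 - 1 < m$.

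\textbf{Key steps.} First I would pass to the polynomial picture: write $a_i = f_i(\gamma)$ with $\deg f_i < n_1$ and $b_j = g_j(\gamma^{n_1})$ with $\deg g_j < n_2$. Since $\Fm = \Fq(\gamma)$ and $\gamma$ has degree $m \geq n_1 n_2$ over $\Fq$, the minimal polynomial of $\gamma$ has degree $\geq n_1 n_2$, so \emph{any} $\Fq$-polynomial of degree $< n_1 n_2$ that vanishes at $\gamma$ must be the zero polynomial. The equation $a_1 b_2 = a_2 b_1$ becomes $f_1(\gamma)\tilde g_2(\gamma) = f_2(\gamma)\tilde g_1(\gamma)$, where $\tilde g_j(x) := g_j(x^{n_1})$ has degree $< n_1 n_2$ and is supported on exponents divisible by $n_1$; the product polynomials $f_1 \tilde g_2$ and $f_2 \tilde g_1$ both have degree $\leq n_1 n_2 - 1 < m$. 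Therefore the evaluation identity at $\gamma$ forces the \emph{polynomial} identity
\begin{equation*}
f_1(x)\, g_2(x^{n_1}) = f_2(x)\, g_1(x^{n_1}) \quad \text{in } \Fq[x].
\end{equation*}
Next I would exploit unique factorization in $\Fq[x]$. The crucial structural fact is the $n_1$-adic separation of exponents: in a product $f(x) g(x^{n_1})$ with $\deg f < n_1$, the factor $f$ controls the residues of exponents modulo $n_1$ while $g(x^{n_1})$ contributes only exponents $\equiv 0 \pmod{n_1}$. From this one reads off that $f_1$ and $f_2$ must be scalar multiples of each other, and likewise $g_1,g_2$; concretely, comparing the polynomial identity degree-by-degree (or arguing that $g_1(x^{n_1}) \mid f_2(x) g_1(x^{n_1})$ and tracking which monomials can come from the low-degree factor $f$) yields $f_1 = \lambda f_2$ and $g_2 = \lambda g_1$ for a common constant $\lambda \in \Fq$. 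Translating back to $\gamma$ gives $a_1 = \lambda a_2$ and $b_1 = \lambda b_2$ with $\lambda \in \Fq$, which is exactly the condition in Proposition \ref{prop:Sidon_pairs}.

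\textbf{Main obstacle.} The routine parts are the degree bookkeeping and the fact that a nonzero $\Fq$-polynomial of degree $< m$ cannot vanish at $\gamma$. The genuinely delicate step is the extraction of the common scalar $\lambda \in \Fq$ from the polynomial identity $f_1(x) g_2(x^{n_1}) = f_2(x) g_1(x^{n_1})$ with the constraint $\deg f_i < n_1$. The cleanest way I would argue this is to consider the rational function $f_1/f_2 = g_1/g_2$ (assuming, say, $f_2, g_2 \neq 0$, which holds since $a_2,b_2 \neq 0$): the left side is a ratio of polynomials each of degree $< n_1$, while the right side is a ratio of polynomials in $x^{n_1}$. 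Writing this common value in lowest terms, the numerator and denominator must simultaneously divide polynomials of degree $< n_1$ and be polynomials in $x^{n_1}$; the only polynomials in $x^{n_1}$ of degree $< n_1$ are the constants, forcing $f_1/f_2 \in \Fq$ and hence a single scalar $\lambda \in \Fq$ matching both pairs. I would need to handle this coprimality/lowest-terms argument carefully, since that is where the hypothesis $m \geq n_1 n_2$ and the specific exponent-gap structure of $\mS_1$ and $\mS_2$ are truly used.
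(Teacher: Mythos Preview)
Your approach is correct and essentially the same as the paper's: both invoke Proposition~\ref{prop:Sidon_pairs}, use the degree bound $\deg\bigl(f_i(x)g_j(x^{n_1})\bigr)\le n_1n_2-1<m$ together with $[\Fq(\gamma):\Fq]=m$ to promote the evaluation identity to the polynomial identity $f_1(x)\,g_2(x^{n_1})=f_2(x)\,g_1(x^{n_1})$ in $\Fq[x]$, and then exploit the base-$n_1$ separation of exponents. The only difference is in how the last step is packaged: the paper reads off the coefficient of $x^{i_1 n_1+i_2}$ directly as $f_{1,i_2}g_{2,i_1}-f_{2,i_2}g_{1,i_1}$ and concludes proportionality from this tensor identity, whereas you argue via the rational function $f_1/f_2=g_1(x^{n_1})/g_2(x^{n_1})$ in lowest terms. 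Two small points to tighten: your lowest-terms step implicitly uses that $\gcd(g_1,g_2)=1$ in $\Fq[y]$ implies $\gcd(g_1(x^{n_1}),g_2(x^{n_1}))=1$ in $\Fq[x]$ (true, via a common-root argument, but worth stating explicitly since that is exactly what forces the reduced numerator and denominator to be polynomials in $x^{n_1}$); and from $f_1/f_2=g_1(x^{n_1})/g_2(x^{n_1})=\lambda$ you obtain $g_1=\lambda g_2$, not $g_2=\lambda g_1$, so that $b_1=\lambda b_2$ with the \emph{same} $\lambda$ as for the $a_i$.
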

\begin{proof}
By Proposition \ref{prop:Sidon_pairs}, we need to show that if $(f_1(\gamma),g_1(\gamma^{n_1}))=\lambda(f_2(\gamma),g_2(\gamma^{n_1}))$, for some nonzero $f_1,f_2\in \Fq[x]_{<n_1},g_1,g_2\in \Fq[x]_{<n_2}$,  then $\lambda\in \Fq$. Equivalently, by assuming $g_1,g_2$ being monic, our aim is to show that $\lambda=1$, that is $(f_1(\gamma),g_1(\gamma^{n_1}))=(f_2(\gamma),g_2(\gamma^{n_1}))$.

    Assume that $(f_1(\gamma),g_1(\gamma^{n_1}))=\lambda(f_2(\gamma),g_2(\gamma^{n_1}))$, then
    $$\frac{f_1(\gamma)}{f_2(\gamma)}=\lambda=\frac{g_1(\gamma^{n_1})}{g_2(\gamma^{n_1})},$$
    which in turn implies
$$f_1(\gamma)g_2(\gamma^{n_1})-f_2(\gamma)g_1(\gamma^{n_1})=0.$$
For $i \in \{1,2\},$ let us write
$$f_i(x)=\sum_{j=0}^{n_1-1}f_{i,j}x^j, \qquad g_i(x)=\sum_{j=0}^{n_2-1}g_{i,j}x^j.$$
Looking at $h(\gamma):=f_1(\gamma)g_2(\gamma^{n_1})-f_2(\gamma)g_1(\gamma^{n_1})$ as a polynomial in $\gamma$ and fixing any $i_1,i_2$ with $0\le i_2<n_1$, 
the coefficient of $\gamma^{i_1n_1+i_2}$ in $h(\gamma)$ is $f_{1,i_2}g_{2,i_1}-f_{2,i_2}g_{1,i_1}$.

In particular, since $\deg h\leq \max\{n_1\deg g_1 +\deg f_2,n_1\deg g_2+\deg f_1\}\leq n_1(n_2-1)+n_1-1<n_1n_2\leq m$, $h(\gamma)=0$ implies that all the coefficients of the polynomial $h(x)$ are $0$. Thus, for each $i_1\in \{0,\ldots,n_1\}$, $i_2\in \{0,\ldots,n_2\}$ we must have $f_{1,i_2}g_{2,i_1}-f_{2,i_2}g_{1,i_1}=0$. Together with the assumption of $g_1,g_2$ being monic, it readily follows that this implies $f_1=f_2$ and $g_1=g_2$, from which we conclude. 
\end{proof}

We can make use of another additional strategy to construct such $(\Lambda_{1,(1,1)},1)$-evasive spaces in $\Fm^2$ which, under certain assumptions, allows us to reduce the value of $m$. This is inspired by a method given in \cite{napolitano2022linear}, weakening its hypotheses.
Denote by $U\cdot V=\langle ab \colon a\in U,b \in V\rangle_{\Fq}$, for any two $\Fq$-subspaces $U$ and $V$ of $\Fm$.

\begin{lemma}\label{lem:Vq=V}
    Let $U,V\subseteq \Fm$ be two $\Fq$-subspaces and let $\xi\in \Fm^*$ be such that
    \begin{enumerate}
        \item $V^q=V$,
        \item $U\cdot V\cap \xi (U\cdot V)=\{0\}$.
    \end{enumerate}
    Then, $\mS:=U\oplus \mS_2$ with $\mS_2=\{b+\xi b^q \colon b \in V\}$ is  a $(\Lambda_{1,(1,1)},1)$-evasive $[n_1+n_2,2]_{q^m/q}$ system.
\end{lemma}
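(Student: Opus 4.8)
The plan is to verify the criterion of Proposition \ref{prop:Sidon_pairs} with $A=U$ and $B=\mS_2$. Before that, I would record that $\mS$ really is an $[n_1+n_2,2]_{q^m/q}$ system: its $\Fm$-span is all of $\Fm^2$ as soon as $U\neq\{0\}$ and $\mS_2\neq\{0\}$, while the dimension count $\dim_{\Fq}\mS=\dim_{\Fq}U+\dim_{\Fq}\mS_2$ reduces to checking that the $\Fq$-linear map $b\mapsto b+\xi b^q$ is injective on $V$ (so that $\dim_{\Fq}\mS_2=\dim_{\Fq}V=n_2$). For injectivity, suppose $b\in V\setminus\{0\}$ satisfied $b+\xi b^q=0$; picking any $a\in U\setminus\{0\}$ gives $ab=-\xi(ab^q)$ with $ab,ab^q\in U\cdot V$, where $ab^q\in U\cdot V$ exactly because hypothesis (1) forces $b^q\in V$. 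Then $ab\in U\cdot V\cap\xi(U\cdot V)=\{0\}$ by hypothesis (2), so $ab=0$ and hence $b=0$, a contradiction.

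For the evasiveness itself, I would take $a_1,a_2\in U\setminus\{0\}$ and $\beta_1=b_1+\xi b_1^q$, $\beta_2=b_2+\xi b_2^q$ in $\mS_2\setminus\{0\}$ (so $b_1,b_2\neq0$ by injectivity) satisfying $a_1\beta_2=a_2\beta_1$. Expanding and rearranging yields
$$(a_1b_2-a_2b_1)+\xi(a_1b_2^q-a_2b_1^q)=0.$$
The key observation is that both bracketed terms lie in $U\cdot V$: this is again where $V^q=V$ is essential, since it guarantees $b_i^q\in V$ and hence $a_ib_j^q\in U\cdot V$. Writing $P=a_1b_2-a_2b_1$ and $Q=a_1b_2^q-a_2b_1^q$, the identity reads $P=-\xi Q\in\xi(U\cdot V)$, so $P\in U\cdot V\cap\xi(U\cdot V)=\{0\}$ by hypothesis (2); thus $P=0$, and since $\xi\neq0$ also $Q=0$.

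Finally I would exploit $P=Q=0$. From $a_1b_2=a_2b_1$ and $a_1b_2^q=a_2b_1^q$, setting $\mu=a_1/a_2$ gives simultaneously $\mu=b_1/b_2$ and $\mu=(b_1/b_2)^q=\mu^q$, whence $\mu\in\Fq$. Then $a_1=\mu a_2$, and using $\mu^q=\mu$ one gets $\beta_1=\mu b_2+\xi\mu^q b_2^q=\mu\beta_2$, which is exactly the conclusion required by Proposition \ref{prop:Sidon_pairs}; applying that criterion shows $\mS=U\oplus\mS_2$ is $(\Lambda_{1,(1,1)},1)$-evasive. I expect the main obstacle to be conceptual rather than computational: recognizing that after expansion the ``$U\cdot V$ part'' and the ``$\xi(U\cdot V)$ part'' of the equation must vanish \emph{separately}, which is precisely the interaction between the Frobenius-stability $V^q=V$ (packaging $a_ib_j^q$ into $U\cdot V$) and the transversality condition $U\cdot V\cap\xi(U\cdot V)=\{0\}$. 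Everything after that is a short ratio argument.
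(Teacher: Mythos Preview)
Your argument is correct and follows essentially the same route as the paper: expand the relation coming from Proposition~\ref{prop:Sidon_pairs}, observe that both pieces lie in $U\cdot V$ (using $V^q=V$), invoke $U\cdot V\cap\xi(U\cdot V)=\{0\}$ to force each piece to vanish, and finish with the ratio/Frobenius step. The only difference is cosmetic---you use the first formulation of Proposition~\ref{prop:Sidon_pairs} while the paper uses the second---and you additionally spell out the injectivity of $b\mapsto b+\xi b^q$, which the paper leaves implicit.
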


\begin{proof}
    Consider $(a,b+\xi b^q) \in \mS$ with $a,b \ne 0$. Our aim is to show that for any $\lambda \in \Fm$, $c\in U$ and $d \in V$ such that 
    \[ (a,b+\xi b^q)=\lambda(c,d+\xi d^q), \]
    we have $\lambda \in \Fq$. 
    By the above equality we get
    \[
    \left\{
    \begin{array}{ll}
    a=\lambda c,\\
    b+\xi b^q=\lambda(d+\xi d^q).
    \end{array}
    \right.
    \]
    Since also $c,d$ must be nonzero, the above system gives the following identity
    \[ c(b+\xi b^q)=a(d+\xi d^q), \]
    that is $\xi (cb^q-ad^q)=ad-cb$. Since $cb^q-ad^q, ad-cb \in U\cdot V$, we have that 
    \[ cb^q-ad^q=ad-cb=0, \]
    from which we obtain $\lambda \in \Fq$. We  conclude by using Proposition \ref{prop:Sidon_pairs}.
\end{proof}

As a consequence, from Lemma \ref{lem:Vq=V} we recover the construction in  {\cite[Corollary 4.7]{napolitano2022linear}}.

\begin{corollary}\label{cor:1}
    Let $r$ be a proper divisor of $m$ and let $\xi \in \Fm\setminus \F_{q^r}$. Let
    $\mS_1:=\F_{q^r}$ and $\mS_2:=\{a+\xi a^q \colon a \in \F_{q^r}\}$. Then, $\mS_1\oplus \mS_2$ is a $(\Lambda_{1,(1,1)},1)$-evasive $[2r,2]_{q^m/q}$ system.
\end{corollary}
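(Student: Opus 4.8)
The plan is to derive Corollary \ref{cor:1} as a direct application of Lemma \ref{lem:Vq=V}, by choosing the subspaces $U$ and $V$ appropriately and verifying the two hypotheses of that lemma. Concretely, I would set $U:=\F_{q^r}$ and $V:=\F_{q^r}$, so that $\mS_1=U=\F_{q^r}$ and $\mS_2=\{a+\xi a^q : a\in \F_{q^r}\}=\{b+\xi b^q : b\in V\}$ coincide with the subspaces in the statement. With this choice each of $\mS_1,\mS_2$ has $\F_q$-dimension $r$, so the resulting system $\mS_1\oplus\mS_2$ has length $2r$ and $\F_{q^m}$-dimension $2$, as claimed.

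The first hypothesis $V^q=V$ is immediate: since $V=\F_{q^r}$ is a subfield of $\F_{q^m}$ (here one uses that $r\mid m$, which is guaranteed because $r$ is a divisor of $m$), it is closed under the Frobenius map $x\mapsto x^q$, hence $V^q=\F_{q^r}=V$. The second hypothesis requires $U\cdot V\cap \xi(U\cdot V)=\{0\}$. Here the key observation is that $U\cdot V=\F_{q^r}\cdot\F_{q^r}=\F_{q^r}$, since the product of two elements of the field $\F_{q^r}$ lies in $\F_{q^r}$, and conversely every element is such a product (e.g.\ $a=a\cdot 1$). Thus I must show $\F_{q^r}\cap \xi\,\F_{q^r}=\{0\}$.

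The main (and essentially only) point to check is therefore this intersection condition, which is where the hypothesis $\xi\in\Fm\setminus\F_{q^r}$ enters. Suppose some nonzero element lies in the intersection, say $u=\xi v$ with $u,v\in\F_{q^r}\setminus\{0\}$. Then $\xi=u v^{-1}\in\F_{q^r}$, since $\F_{q^r}$ is a field and hence closed under multiplication and taking inverses. This contradicts the assumption $\xi\notin\F_{q^r}$. Therefore the only element in $\F_{q^r}\cap\xi\,\F_{q^r}$ is $0$, which establishes hypothesis (2).

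Having verified both hypotheses of Lemma \ref{lem:Vq=V} with $U=V=\F_{q^r}$ and the given $\xi$, the lemma immediately yields that $\mS_1\oplus\mS_2$ is a $(\Lambda_{1,(1,1)},1)$-evasive $[2r,2]_{q^m/q}$ system, which is exactly the assertion. I do not expect any genuine obstacle here; the only care needed is to record that $r\mid m$ ensures $\F_{q^r}\subseteq\Fm$ (so that $\xi$ and the subspaces live in a common field and $V$ is Frobenius-stable), and that the multiplicative closure of the subfield $\F_{q^r}$ makes both $U\cdot V=\F_{q^r}$ and the intersection argument work cleanly.
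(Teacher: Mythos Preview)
Your proposal is correct and follows exactly the approach the paper intends: the corollary is stated immediately after Lemma \ref{lem:Vq=V} as a direct consequence, and your verification of the two hypotheses with $U=V=\F_{q^r}$ (using that $\F_{q^r}$ is a Frobenius-stable subfield and that $\xi\notin\F_{q^r}$ forces $\F_{q^r}\cap\xi\F_{q^r}=\{0\}$) is precisely the intended argument. The only very minor point you assert without justification is that $\dim_{\F_q}\mS_2=r$, i.e.\ that $a\mapsto a+\xi a^q$ is injective on $\F_{q^r}$; but this is immediate from the same intersection condition (if $a+\xi a^q=0$ with $a\neq 0$ then $-a=\xi a^q$ lies in $\F_{q^r}\cap\xi\F_{q^r}$), so there is no gap.
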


We can do better in some cases, when $m$ is a prime power, using a particular family of subspaces studied in \cite{neri2023proof}. We do this by combining the following two technical lemmas with the previous Lemma \ref{lem:Vq=V}.
Assume that we have an extension field $\Fm$ of $\Fq$, where $m$ is a power of the characteristic $p$, that is, $q=p^h$ and $m=p^r$. In this case, for each $i \in \{0,\ldots,p^r\}$, define the linearized polynomial
\begin{equation}\label{eq:pi}p_i(x):=\underbrace{(x^q-x)\circ\ldots \circ (x^q-x)}_{i \text{ times}}=\sum_{j=0}^i\binom{i}{j}(-1)^{i-j}x^{q^j},\end{equation}
and the $\Fq$-subspace of $\Fm$
$$\mathcal F_i:=\ker p_i.$$

We can compute the dimension of the subspaces $\mathcal{F}_i$ and establish some of their multiplicative properties.

\begin{lemma}\label{lem:modular}
    Let $q=p^h$ and $m=p^r$, and for each $i\in \{0,\ldots,p^r\}$, let  {$\mathcal F_i:=\ker p_i\subseteq \mathbb{F}_{q^m}$}, where $p_i$ is defined as in \eqref{eq:pi}. Then:
    \begin{enumerate}
        \item $\dim_{\Fq}(\mathcal F_i)=i$, for each $i\in \{0,\ldots,p^r\}$.
        \item $\mathcal F_i^q=\mathcal F_i$, for each $i\in \{0,\ldots,p^r\}$.
        \item $\mathcal F_i\cdot \mathcal F_j\subseteq \mathcal F_{i+j-1}$ for each $i,j$ such that  {$i+j\le m+1$}.
    \end{enumerate}
\end{lemma}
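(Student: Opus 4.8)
The plan is to prove the three parts separately, with part (3) being the substantive one. Throughout, I work in $\Fm$ with $q=p^h$ and $m=p^r$, and I use freely that the polynomial $p_i(x)=(x^q-x)^{\circ i}$ is $\Fq$-linearized, so that $\mathcal F_i=\ker p_i$ is an $\Fq$-subspace of $\Fm$. The single most useful structural observation, which I would record first, is the factorization
$$
p_i(x)=p_{i-1}(x^q-x)=p_1\circ p_1\circ\cdots\circ p_1(x),
$$
i.e. $p_i=p_1^{\circ i}$ where $p_1(x)=x^q-x$. This immediately gives the nesting $\mathcal F_{i-1}\subseteq \mathcal F_i$ and the recursive description $p_i=p_{i-1}\circ p_1$.

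For part (1), I would argue by induction on $i$. The map $p_1(x)=x^q-x$ has kernel $\mathcal F_1=\F_q$ (dimension $1$) and is therefore $q$-to-one as an $\Fq$-linear endomorphism of $\Fm$; in particular it is surjective onto $\Fm$ since $m=p^r$ makes $x^q-x$ a bijection up to its kernel. Using $p_i=p_{i-1}\circ p_1$ and the rank–nullity relation $\dim\ker(p_{i-1}\circ p_1)=\dim\ker p_1+\dim p_1^{-1}(\ker p_{i-1}\cap \operatorname{im}p_1)$, the surjectivity of $p_1$ yields $\dim_{\Fq}\mathcal F_i=\dim_{\Fq}\mathcal F_{i-1}+1$, whence $\dim_{\Fq}\mathcal F_i=i$ as long as $i\le p^r$ (the point being that $p_{p^r}$ vanishes identically, so the dimensions saturate at $m$). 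Part (2) is essentially formal: since $p_i$ has coefficients in $\F_q$ (the binomial coefficients in \eqref{eq:pi} lie in $\F_p\subseteq\F_q$), the Frobenius $x\mapsto x^q$ commutes with $p_i$, so $a\in\mathcal F_i \iff p_i(a)=0 \iff p_i(a^q)=p_i(a)^q=0 \iff a^q\in\mathcal F_i$; hence $\mathcal F_i^q=\mathcal F_i$.

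Part (3), the inclusion $\mathcal F_i\cdot\mathcal F_j\subseteq\mathcal F_{i+j-1}$, is where the real work lies and what I expect to be the main obstacle. The content is a Leibniz-type rule for the operator $p_1(x)=x^q-x$ acting on products. The key computation I would establish is that for $a,b\in\Fm$,
$$
p_1(ab)=a^q b^q-ab=a^q\,p_1(b)+b\,p_1(a),
$$
so $p_1$ acts on a product as a twisted derivation (a $\sigma$-derivation with $\sigma$ the Frobenius). Iterating this $\sigma$-Leibniz rule $i+j-1$ times and tracking which factors absorb how many applications of $p_1$, one shows by induction that if $p_i(a)=0$ and $p_j(b)=0$ then every term of $p_{i+j-1}(ab)$ contains either a factor $p_i(a)$ (possibly Frobenius-twisted, still zero since $\mathcal F_i^q=\mathcal F_i$ by part (2)) or a factor $p_j(b)$, forcing $p_{i+j-1}(ab)=0$. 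The bookkeeping — a $q$-analogue of the identity $\frac{d^{i+j-1}}{dx^{i+j-1}}(fg)$ vanishing when $f$ has an $i$-fold and $g$ a $j$-fold root, via a twisted binomial/Vandermonde expansion — is the delicate part, and the hypothesis $i+j\le m+1$ is exactly what keeps all indices in the range where the $\mathcal F$'s are the honest kernels (dimension $<m$) rather than all of $\Fm$. Once the inclusion holds on the spanning products $ab$ with $a\in\mathcal F_i$, $b\in\mathcal F_j$, it extends by $\Fq$-linearity of $p_{i+j-1}$ to all of $\mathcal F_i\cdot\mathcal F_j$, completing the proof.
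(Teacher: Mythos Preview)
Your argument for part~(2) is correct and coincides verbatim with the paper's one-line justification. For parts~(1) and~(3) the paper does not give a self-contained proof but simply cites \cite{neri2023proof}, so there is no line-by-line comparison to make; your proposed arguments are essentially what lies behind that citation, and in particular the twisted Leibniz approach to~(3) is exactly right: the identity $p_1(ab)=a^q\,p_1(b)+p_1(a)\,b$ exhibits $p_1$ as a $\sigma$-derivation commuting with $\sigma(x)=x^q$, and iterating gives
\[
p_1^{\,n}(ab)=\sum_{k=0}^{n}\binom{n}{k}\,\sigma^{\,n-k}\!\big(p_1^{\,k}(a)\big)\,p_1^{\,n-k}(b),
\]
from which $a\in\mathcal F_i$, $b\in\mathcal F_j$ force $p_1^{\,i+j-1}(ab)=0$.

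There is, however, a genuine gap in your treatment of part~(1). You assert that $p_1(x)=x^q-x$ is surjective on $\Fm$ ``since $m=p^r$ makes $x^q-x$ a bijection up to its kernel''. This is false: $p_1$ is an $\Fq$-linear endomorphism of the $m$-dimensional space $\Fm$ with a one-dimensional kernel, so its image has dimension $m-1$, not $m$. Your rank--nullity recursion therefore does not go through as written. What you actually need, and what is true, is the weaker statement $\mathcal F_{i-1}\subseteq\operatorname{im}p_1$ for $i\le m$; this follows because $p_m$ vanishes identically on $\Fm$ (Lucas' theorem gives $p_m(x)=x^{q^m}-x$), so $\operatorname{im}p_1\subseteq\mathcal F_{m-1}$, while the degree bound $\deg p_{m-1}=q^{m-1}$ forces $\dim\mathcal F_{m-1}\le m-1=\dim\operatorname{im}p_1$, whence $\operatorname{im}p_1=\mathcal F_{m-1}\supseteq\mathcal F_{i-1}$. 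Alternatively, and more cleanly: $p_1$ is nilpotent on $\Fm$ with $\dim\ker p_1=1$, so it consists of a single Jordan block and $\dim\ker p_1^{\,i}=\min\{i,m\}$.
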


\begin{proof}
    \begin{enumerate}
        \item This was proved in \cite[Lemma 3.1]{neri2023proof}.
        \item Since all the coefficients of $p_i$ are in $\Fq$, then one gets that $p_i(\alpha)=0$ if and only if $p_i(\alpha^q)=0$.
        \item This was shown in \cite[Proposition 3.3]{neri2023proof}.
    \end{enumerate}
\end{proof}

In the next result, we show that the existence of $\xi$ in Lemma \ref{lem:Vq=V} is always guaranteed, when the dimension of the subspace involved is at most $m/2$.

\begin{lemma}\label{lem:VxV}
    Let $V$ be an $\Fq$-subspace of $\Fm$ of dimension $\ell\leq \frac{m}{2}$. Then, there exists $\xi \in \Fm^*$ such that $V\cap\xi V=\{0\}$.
\end{lemma}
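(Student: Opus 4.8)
The goal is to show that for an $\Fq$-subspace $V\subseteq \Fm$ of dimension $\ell\le m/2$, there exists $\xi\in\Fm^*$ with $V\cap\xi V=\{0\}$. My plan is to use a counting argument: I want to count the nonzero elements $\xi\in\Fm^*$ that are \emph{forbidden}, meaning those for which $V\cap\xi V\ne\{0\}$, and show this count is strictly smaller than $|\Fm^*|=q^m-1$.

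First I would observe that $V\cap\xi V\ne\{0\}$ is equivalent to the existence of nonzero $v,w\in V$ with $v=\xi w$, i.e. $\xi=v w^{-1}$ for some $v,w\in V\setminus\{0\}$. Thus the set of forbidden $\xi$ is precisely the set of ratios
\[
R:=\left\{ v w^{-1} \colon v,w\in V\setminus\{0\}\right\}\subseteq \Fm^*.
\]
So it suffices to prove that $|R|<q^m-1$, since then any $\xi\in\Fm^*\setminus R$ does the job. The crude bound $|R|\le (q^\ell-1)^2$ counts pairs, but many pairs give the same ratio (at least each $\xi$ arising is hit by the whole $\Fq^*$-scaling family $(\lambda v,\lambda w)$), so I would refine this. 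The cleanest refinement: for a fixed $\xi$, the number of pairs $(v,w)\in (V\setminus\{0\})^2$ with $v=\xi w$ equals $|V\cap\xi V|-1$ (choose $w$ with $\xi w\in V$, then $v$ is determined), which is at least $q-1$ whenever $\xi\in R$. Hence
\[
(q^\ell-1)^2=\sum_{\xi\in R}\bigl(|V\cap\xi V|-1\bigr)\ge (q-1)\,|R|,
\]
giving $|R|\le \frac{(q^\ell-1)^2}{q-1}$. I then need this to be strictly less than $q^m-1$. Since $\ell\le m/2$ gives $q^\ell\le q^{m/2}$, we get $(q^\ell-1)^2< q^{2\ell}\le q^m$, so $|R|< \frac{q^m}{q-1}$; this bound is not quite sharp enough on its own, so the key step is to sharpen the estimate.

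The main obstacle is tightening the inequality enough to clear $q^m-1$. I expect the clean route is to note that $1\in R$ always (take $v=w$), and more importantly that $R$ is a union of $\Fq^*$-cosets, so $|R|\le \frac{(q^\ell-1)^2}{q-1}+1$ counted carefully, but I would instead argue directly on the nonzero ratios modulo $\Fq^*$. Writing $\overline R$ for the image of $R$ in $\Fm^*/\Fq^*$, each nonzero element of $V$ up to $\Fq^*$-scaling contributes, so $|\overline R|\le \left(\frac{q^\ell-1}{q-1}\right)^2$, and the forbidden set satisfies $|R|\le (q-1)|\overline R|\le \frac{(q^\ell-1)^2}{q-1}$. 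For $\ell\le m/2$ this is at most $\frac{(q^{m/2}-1)^2}{q-1}=\frac{q^m-2q^{m/2}+1}{q-1}$, which I would compare against $q^m-1$. The decisive check is the elementary inequality
\[
\frac{q^m-2q^{m/2}+1}{q-1}< q^m-1,
\]
equivalent after clearing denominators to $q^m-2q^{m/2}+1<(q-1)(q^m-1)$, i.e. $0<(q-2)q^m+2q^{m/2}-q$, which holds for all $q\ge 2$ and $m\ge 2$ (the boundary case $q=2$ still works since then $2q^{m/2}-q>0$). Thus $R\ne\Fm^*$, and any $\xi\in\Fm^*\setminus R$ satisfies $V\cap\xi V=\{0\}$, completing the proof.
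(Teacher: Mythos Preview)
Your argument is correct and is essentially the same double-counting as in the paper: both hinge on the identity $\sum_{\xi}(|V\cap\xi V|-1)=(q^\ell-1)^2$ together with the observation that each $\xi$ with $V\cap\xi V\ne\{0\}$ contributes at least $q-1$. The paper phrases this as a contradiction (if every $\xi$ were forbidden, the sum would be at least $(q-1)(q^m-1)$, forcing $2\ell\ge m+1$), while you bound $|R|\le(q^\ell-1)^2/(q-1)<q^m-1$ directly; the $\overline R$ detour is redundant since your first counting already gives the needed bound.
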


\begin{proof}
    Assume by contradiction that $V\cap \xi V\neq\{0\}$ for every $\xi \in \Fm^*$. Then, we must have
    $$\sum_{\xi \in \Fm^*}|(V\cap \xi V)\setminus\{0\}|\geq (q-1)(q^m-1).$$
    On the other hand, an element $\beta\in V\setminus\{0\}$ belongs to $\xi V\setminus\{0\}$ if and only if $\xi =\beta \gamma^{-1}$ for some $\gamma \in V\setminus\{0\}$. Thus, each element in $V\setminus\{0\}$ is counted exactly $|V|-1=q^\ell-1$ times  {in the sum $\sum_{\xi \in \Fm^*}|(V\cap \xi V)\setminus\{0\}|$}, and hence
    $$\sum_{\xi \in \Fm^*}|(V\cap \xi V)\setminus\{0\}|=(q^\ell-1)^2.$$
    This implies that
    $$(q^\ell-1)^2\geq (q-1)(q^m-1),$$
    from which we can derive $2\ell\ge m+1$, contradicting the hypothesis.
\end{proof}

Therefore, as a consequence we have the following construction of $(\Lambda_{1,(1,1)},1)$-evasive subspaces.

\begin{corollary}\label{cor:modular}
    Let $p$ be the characteristic of $\F_{q^m}$, $m=p^r$, and let $n_1,n_2$ be any positive integers  such that $n_1+n_2-1\le \frac{m}{2}$. Furthermore, let $\xi \in \Fm^*$ be such that $\mathcal F_{n_1+n_2-1}\cap \xi\mathcal F_{n_1+n_2-1}=\{0\}$.
    Define
    $\mS_1:=\mathcal F_{n_1}$ and $\mS_2:=\{a+\xi a^q \colon a \in \mathcal F_{n_2}\}$. Then, $\mS_1\oplus \mS_2$ is a $(\Lambda_{1,(1,1)},1)$-evasive $[n_1+n_2,2]_{q^m/q}$ system.
\end{corollary}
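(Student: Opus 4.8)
The plan is to verify the two hypotheses of Lemma~\ref{lem:Vq=V} for the specific subspaces $U=\mathcal F_{n_1}$ and $V=\mathcal F_{n_2}$, and then invoke that lemma directly. Recall that Lemma~\ref{lem:Vq=V} requires two conditions on the subspaces $U,V\subseteq\Fm$ and the element $\xi\in\Fm^*$: first, that $V^q=V$; and second, that $U\cdot V\cap \xi(U\cdot V)=\{0\}$. If both hold, then $\mS_1\oplus\mS_2$ with $\mS_1=U$ and $\mS_2=\{b+\xi b^q \colon b\in V\}$ is a $(\Lambda_{1,(1,1)},1)$-evasive $[n_1+n_2,2]_{q^m/q}$ system, which is exactly the conclusion we want (with $U=\mathcal F_{n_1}$ and $V=\mathcal F_{n_2}$).

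First I would check the hypotheses of Lemma~\ref{lem:Vq=V} with $V=\mathcal F_{n_2}$. Condition~(1) of that lemma, namely $V^q=V$, is precisely Lemma~\ref{lem:modular}(2) applied to $\mathcal F_{n_2}$. For condition~(2), the key is to control the product $U\cdot V=\mathcal F_{n_1}\cdot\mathcal F_{n_2}$. By Lemma~\ref{lem:modular}(3) we have $\mathcal F_{n_1}\cdot\mathcal F_{n_2}\subseteq\mathcal F_{n_1+n_2-1}$, where the hypothesis $n_1+n_2-1\le\frac m2\le n+1$ (here interpreting $n=m$ for the modular bound in that lemma) ensures the product bound applies. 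Therefore $U\cdot V\subseteq\mathcal F_{n_1+n_2-1}$, and consequently
\[
(U\cdot V)\cap\xi(U\cdot V)\subseteq\mathcal F_{n_1+n_2-1}\cap\xi\mathcal F_{n_1+n_2-1}.
\]
By the choice of $\xi$ in the statement, the right-hand side is $\{0\}$, so condition~(2) of Lemma~\ref{lem:Vq=V} holds as well.

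The existence of such a $\xi$ is exactly what Lemma~\ref{lem:VxV} guarantees: since $\mathcal F_{n_1+n_2-1}$ has $\Fq$-dimension $n_1+n_2-1\le\frac m2$ by Lemma~\ref{lem:modular}(1), there is some $\xi\in\Fm^*$ with $\mathcal F_{n_1+n_2-1}\cap\xi\mathcal F_{n_1+n_2-1}=\{0\}$. I would note this for completeness, since the corollary posits $\xi$ as given but its existence under the stated dimension hypothesis is precisely Lemma~\ref{lem:VxV}. Having verified both hypotheses, Lemma~\ref{lem:Vq=V} immediately yields that $\mS_1\oplus\mS_2$ is $(\Lambda_{1,(1,1)},1)$-evasive, completing the proof.

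The proof is essentially an assembly of the three preceding lemmas, so there is no serious obstacle; the only point requiring care is the containment step $U\cdot V\cap\xi(U\cdot V)\subseteq\mathcal F_{n_1+n_2-1}\cap\xi\mathcal F_{n_1+n_2-1}$, which relies on the multiplicative property Lemma~\ref{lem:modular}(3) and the fact that scaling by $\xi$ commutes with the inclusion of the products. One should also double-check that the dimension bound $n_1+n_2-1\le\frac m2$ is compatible with the index range $i+j\le n+1$ demanded by Lemma~\ref{lem:modular}(3), so that the product bound is legitimately applicable; this is the step I would expect to verify most carefully.
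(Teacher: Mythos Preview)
Your proposal is correct and follows essentially the same route as the paper's proof: set $U=\mathcal F_{n_1}$, $V=\mathcal F_{n_2}$, use Lemma~\ref{lem:modular}(2) for $V^q=V$, Lemma~\ref{lem:modular}(3) for $U\cdot V\subseteq\mathcal F_{n_1+n_2-1}$, Lemma~\ref{lem:VxV} (together with Lemma~\ref{lem:modular}(1)) for the existence of $\xi$, and then invoke Lemma~\ref{lem:Vq=V}. Your observation about the index range in Lemma~\ref{lem:modular}(3) is a valid sanity check, and indeed $n_1+n_2\le\frac m2+1\le m+1$ suffices.
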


\begin{proof}
    Let us take $U=\mF_{n_1}$ and $V=\mF_{n_2}$. By Lemma \ref{lem:modular}(3), we have that $U\cdot V\subseteq \mF_{n_1+n_2-1}$ and $\dim(\mF_{n_1+n_2-1})=n_1+n_2-1\le \frac{m}{2}$. Hence,  Lemma \ref{lem:VxV} ensures the existence of $\xi \in \Fm^*$ such that $\mathcal F_{n_1+n_2-1}\cap \xi\mathcal F_{n_1+n_2-1}=\{0\}$. In particular, due also to Lemma \ref{lem:modular}(2), $U,V$ and $\xi$ satisfy the hypotheses of Lemma \ref{lem:Vq=V}, and thus we conclude.
\end{proof}

Combining the results of this section we are able to prove Theorem \ref{thm:summary}.

\begin{proof}[Proof of Theorem \ref{thm:summary}]
 {By Theorem \ref{thm:charact_independence}, the direct sum $\mathcal{U}_{1,n_1}(q) \oplus \mathcal{U}_{1,n_2}(q)$ is $\Fm$-representable if and only if there exist $\Fq$-subspaces $\mathcal{S}_1$ and $\mathcal{S}_2$ of $\Fm$ such that $\mathcal{S}_1 \oplus \mathcal{S}_2$ is $(\Lambda_{1,(1,1)},1)$-evasive.
The existence of such subspaces is ensured:
\begin{itemize}
\item by Corollary \ref{cor:1}, under assumptions (1) and (4),
\item by Proposition \ref{prop:m_ge_n1n2}, under assumption (2),
\item by \cite[Theorem 4.5]{napolitano2022linear}, under assumption (3), and
\item by Corollary \ref{cor:modular}, under assumption (5).
\end{itemize}
}
\end{proof}

\begin{remark}
    Let $n_2=2$ and let $n_1$ be any positive integer greater than $1$. Combining Corollary \ref{cor:necessary_two_rank1} and Theorem \ref{thm:summary}(2), we obtain that $\mU_{1,n_1}(q)\oplus \mU_{1,2}(q)$ is $\Fm$-representable if and only if $m\geq 2n_1$. This generalizes the characterization result for the $\Fm$-representability of two uniform $q$-matroids of rank $k_1=k_2=1$ and height $n_1=n_2=2$ given  \cite[Proposition 3.8]{gluesing2022representability} (see Example \ref{exa:representable_U12}).
\end{remark}
\begin{remark}
    Let $n_1,n_2\geq 3$. Then, there are only finitely many degree extensions $m$ such that we do not know yet whether the $q$-matroid $\mU_{1,n_1}(q)\oplus \mU_{1,n_2}(q)$ is $\Fm$-representable. Indeed, we know that $\mU_{1,n_1}(q)\oplus\mU_{1,n_2}(q)$ is $\Fm$-representable for every $m\geq n_1n_2$ (see Theorem \ref{thm:summary}(2)) and for every even $m \geq 2\max\{n_1,n_2\}$ (see Theorem \ref{thm:summary}(1)). The only open cases are the odd $m$ such that $2\max\{n_1,n_2\}<m <n_1n_2$, for which we can also give some answers depending on the values of $n_1, n_2$ and $q$ using Theorem \ref{thm:summary}(3,4,5). 
\end{remark}

\begin{remark}
    The smallest case left out by Theorem \ref{thm:summary} is whether the direct sum of two uniform $q$-matroids of rank $1$ and height $3$ is $\F_{q^7}$-representable. Using the algebra software \textsc{magma} \cite{magma}, we found examples of $(\Lambda_{1,(1,1)},1)$-evasive subspaces  for $q \in \{2,3,4,5,7\}$. Theorem \ref{thm:charact_independence} implies that the direct sum of two copies of $\mathcal{U}_{1,3}(q)$ is $\F_{q^7}$-representable for $q \in \{2,3,4,5,7\}$.
\end{remark}

We conclude the section by illustrating Theorem \ref{thm:summary} with a concrete example.

\begin{example}
    Let $n_1=7$ and $n_2=6$. We know by Corollary \ref{cor:necessary_two_rank1} that $\mM=\mU_{1,7}(q)\oplus \mU_{1,6}(q)$ is not $\Fm$-representable for each  {$m \in [12]$}. On the other hand, $\mM$ is $\Fm$-representable for every $m \geq 42$ (Theorem \ref{thm:summary}(2)) and for  {$m \in \{14,16,18, 20,\ldots,38, 40\}$} (Theorem \ref{thm:summary}(1)). Moreover, using Theorem \ref{thm:summary}(4), we also derive that $\mM$ is $\Fm$-representable for $m \in \{21,27,33,35,39\}$.
    Finally, Theorem \ref{thm:summary}(5) implies that $\mM$ is $\Fm$-representable also when 
    \begin{equation}\label{eq:m_char}m=\begin{cases}
        25 &  \mbox{ if } q=5^h, \\
        29 & \mbox{ if }  q=29^h, \\
        31 & \mbox{ if } q=31^h, \\
        37 & \mbox{ if } q=37^h, \\
        41 & \mbox{ if } q=41^h. 
    \end{cases} \end{equation}
    The only open cases are for  {$m \in \{13,15,17,19,23,25^*,29^*,31^*,37^*,41^*\}$}, where the $^*$ indicates that the case is open except for some values of the characteristic of the field, according to \eqref{eq:m_char}.
\end{example}

\section{Conclusions and open questions}\label{sec:conclusion}

In this paper we studied the representability of the direct sum of $t$ uniform $q$-matroids. By establishing a geometric characterization of the representability, we identified the critical role of evasiveness properties and cyclic flats. Our results show that the direct sum of uniform $q$-matroids is representable if we can find $q$-systems satisfying a certain evasiveness property.

We have constructed a $q$-system over sufficiently large fields that meets these properties, affirming that the direct sum of $t$ uniform $q$-matroids is always representable. This construction not only demonstrates representability but also raises intriguing questions regarding the minimal extension field required for such representations.

Our exploration of the specific case involving the direct sum of two uniform $q$-matroids of rank $1$ yielded more detailed insights. By leveraging the existing research on linear sets with complementary weights and on recent results on rank-metric codes, we determined more precise extension field order assumptions. 

At this point, there are some natural problems that arise.

\begin{itemize} 
       \item The problem of characterizing the extensions over which the direct sum of $t$ uniform $q$-matroids is representable stays widely open. We gave only some partial answers to this question in the general case. Only in the special case of $t=2$ uniform $q$-matroids of rank $1$, we were able to provide more accurate results, leaving only finitely many cases unsolved. However, the problem is still open, even in the case of $t$ uniform $q$-matroids of rank $1$.
    \item In order to study the problem of the representability of the direct sum of (not necessarily uniform) $q$-matroids  we would need a generalization of Theorem \ref{thm:charact_independence}. This may allow us to use similar arguments to characterize the representability of $q$-matroids. 
\end{itemize}

\section*{Acknowledgments}
The authors are thankful to Giovanni Longobardi, Rocco Trombetti and Lei Xu for pointing out a mistake in the proof $(2)\Rightarrow (3)$ of Theorem \ref{thm:charact_independence} in a previous version of the paper. Moreover, the authors are thankful to the anonymous referees for their valuable comments and dedication.
Gianira~N. Alfarano was supported by the Swiss National Foundation through grant no. 210966, by the FWO (Research Foundation Flanders) grant no. 1273624N and by the ANR through grant no ANR-24-CPJ1-0075-01. Alessandro Neri was supported by   the FWO (Research Foundation Flanders) grant no. 12ZZB23N and by the INdAM - GNSAGA Project CUP E53C24001950001  ``Noncommutative polynomials in coding theory''. Ferdinando Zullo is very grateful for the hospitality of Ghent University, where he was a visiting researcher during the development of this research in September 2023. He is supported by the project COMBINE  of the University of Campania ``Luigi Vanvitelli'' and by Erasmus+ UE (Programme Countries) from Unicampania and by the INdAM - GNSAGA.

\bibliographystyle{abbrv}
\bibliography{references}

\end{document}